%%%%%%%%%%%%%%%%%%%%%%%%%%%%%%%%%%%%%%%%%%%%%%%%%%%%%%%%%%%%%%%%%%%%%%%%%%%%%%%%
%2345678901234567890123456789012345678901234567890123456789012345678901234567890
%        1         2         3         4         5         6         7         8

\documentclass[letterpaper, 10 pt, conference]{ieeeconf}  % Comment this line out if you need a4paper

\IEEEoverridecommandlockouts                              % This command is only needed if 
                                                          % you want to use the \thanks command

\overrideIEEEmargins                                      % Needed to meet printer requirements.

%In case you encounter the following error:
%Error 1010 The PDF file may be corrupt (unable to open PDF file) OR
%Error 1000 An error occurred while parsing a contents stream. Unable to analyze the PDF file.
%This is a known problem with pdfLaTeX conversion filter. The file cannot be opened with acrobat reader
%Please use one of the alternatives below to circumvent this error by uncommenting one or the other
%\pdfobjcompresslevel=0
%\pdfminorversion=4

% See the \addtolength command later in the file to balance the column lengths
% on the last page of the document

% The following packages can be found on http:\\www.ctan.org

\usepackage{amsmath,amssymb,amsfonts, amsthm}
\usepackage{enumitem}
\usepackage{wrapfig}
\usepackage{subcaption}
\usepackage{multirow}
\usepackage{graphicx}
\usepackage{subcaption}
\usepackage{algorithm, algorithmic}
\usepackage{mathrsfs}
\usepackage{hyperref}
\usepackage[noadjust]{cite}
% \renewcommand{\citeunct}{,\penalty\citeunctpenalty\,}
% optionally

\newcommand{\Ss}{\bar{\mathbb E}}

\usepackage{xcolor}

\newcommand{\res}[1]{\textcolor{black}{#1}}
\usepackage{environ}
\NewEnviron{rese}{%
    \textcolor{black}{\BODY}%
}

\newtheorem{theorem}{Theorem}
\newtheorem{lemma}[theorem]{Lemma}
\newtheorem{proposition}[theorem]{Proposition}

\newtheorem{assumption}{Assumption}
\newtheorem{remark}{Remark}
\newtheorem{definition}{Definition}

\newtheorem{problem}{Problem}

\title{\LARGE \bf
 Conditional Generative Modeling of Stochastic LTI Systems: A Behavioral Approach 
}
\author{Jiayun Li$^{1}$ and Yilin Mo$^{1}$
\thanks{$^{1}$Jiayun Li and Yilin Mo are with the Department of Automation and BNRist, Tsinghua University, Beijing, P.R.China. Emails: \texttt{lijiayun22@mails.tsinghua.edu.cn, ylmo@tsinghua.edu.cn}.}%
}

% \author{$^{1}$% <-this % stops a space
% \thanks{*This work was not supported by any organization}% <-this % stops a space
% }

\begin{document}

\maketitle
\thispagestyle{empty}
\pagestyle{empty}

%%%%%%%%%%%%%%%%%%%%%%%%%%%%%%%%%%%%%%%%%%%%%%%%%%%%%%%%%%%%%%%%%%%%%%%%%%%%%%%%
\begin{abstract}
This paper presents a data-driven model for Linear Time-Invariant (LTI) stochastic systems by sampling from the conditional probability distribution of future outputs given past input-outputs and future inputs. It operates in a fully behavioral manner, relying solely on the current trajectory and \res{pre-collected input-output data}, without requiring explicit identification of system parameters. We refer to this model as a behavioral Conditional Generative Model (CGM). We prove the convergence of the distribution of samples generated by the CGM as the size of the \res{trajectory library} increases, with an explicit characterization of the convergence rate. Furthermore, we demonstrate that the gap between the asymptotic distribution of the proposed CGM and the true posterior distribution obtained by Kalman filter, which leverages the knowledge of all system parameters and all historical data, decreases exponentially with respect to the length of past samples. Finally, we integrate this generative model into predictive controllers for stochastic LTI systems. Numerical results verify the derived bounds and demonstrate the effectiveness of the controller equipped with the proposed behavioral CGM.
\end{abstract}

%%%%%%%%%%%%%%%%%%%%%%%%%%%%%%%%%%%%%%%%%%

\section{Introduction}
Linear Time-Invariant (LTI) systems play a crucial role in control due to their simplicity and wide applicability in various domains, such as control design~\cite{morari_model_1999,bemporad_robust_1999}, time-series forecasting~\cite{mahalakshmi2016survey}, and fault detection~\cite{isermann2005model, venkatasubramanian2003review}. These models must be accurate, especially in demanding scenarios such as agile locomotion control~\cite{kaufmann2023champion, neunert2018whole}, and must account for system uncertainties for safety-critical applications~\cite{knight2002safety}. Although first-principle models, i.e., models derived from physical laws, exist for many systems, they may not suffice due to factors such as deviation from the nominal parameters and/or inaccuracy in the uncertainty models~\cite{coulson2019data,coulson_distributionally_2021}. In contrast, input-output (I/O) data of the system are often readily available, which has recently spurred significant research into the data-driven construction of precise predictive models for unknown LTI systems~\cite{coulson2019data}.

Classic data-driven modeling approaches primarily use data to identify parameters of parametric models, i.e., state-space models and transfer functions~\cite{ljung1998system, ho1966effective, VANOVERSCHEE199475, VERHAEGEN199461}. Recently, \emph{behavioral models}, a subset of nonparametric models, have emerged as a viable alternative to parametric models for LTI systems~\cite{coulson2019data,VERHEIJEN2023100914,dorfler_bridging_2023,sader2023causality, WANG2025111897}. \res{Unlike system identification, which requires selecting a model structure and estimating system parameters, behavioral approaches define dynamical systems directly as sets of trajectories~\cite{MARKOVSKY202142}. These methods first construct a \res{\emph{trajectory library}} by pre-collecting input-output data, and during online operation, they use past samples and future inputs to predict future outputs consistent with the trajectory library, eliminating the need for explicit parametric identification.} \res{This representation-free perspective avoids model-structure selection errors~\cite{dorfler_bridging_2023,MARKOVSKY202142}. Moreover, unlike two-stage approaches that first identify a parametric model and then design a controller, behavioral models support a direct data-to-controller design, which mitigates error propagation across multi-stage procedures and simplifies implementation in practical systems~\cite{dorfler_bridging_2023,coulson2019data}.}

For \emph{deterministic} LTI systems, the Willems' lemma~\cite{fundamental_lemma}, a classic result in behavioral system theory, serves as an exact behavioral model. It states that any feasible trajectory of the system can be represented as a linear combination of trajectories from the \res{trajectory library}~\cite{coulson2019data}. Building on this lemma, methods such as DeePC~\cite{coulson2019data} further construct direct data-driven controllers by formulating an optimization problem akin to Model Predictive Control (MPC), replacing the parametric predictive models with behavioral models. These controllers are extended to noisy systems by incorporating regularization terms into the objective function~\cite{coulson2019data, MATTSSON2023625,dorfler_bridging_2023,coulson_distributionally_2021}. However, these regularization terms may not faithfully represent the stochastic nature of the system, since the Willems' lemma, which these methods rely on, applies strictly to deterministic systems. This gap motivates the development of a behavioral model for \emph{stochastic} LTI systems.

A substantial amount of recent studies in data-driven control have focused on extending Willems' lemma to stochastic dynamical systems that takes system uncertainties into account. One research direction incorporates additive noise, in addition to inputs and outputs, into the system trajectory, enabling the Willems' lemma to be applied for predicting future outputs and noise based on past data. Pan et al.~\cite{pan_stochastic_2023,pan_towards_nodate} leverage this extended lemma to develop direct data-driven predictive controllers for stochastic LTI systems, while Kerz et al.~\cite{kerz_data-driven_2023} utilize the extended lemma to construct tube-based predictive controllers. However, the extended lemma relies on disturbances in the collected trajectories, which is typically unobservable and requires additional estimation~\cite{pan_stochastic_2023}. Teutsch et al.~\cite{teutsch2024sampling} address this issue for systems with bounded noise through behavior-consistent noise sampling. In a different approach, Wang et al.~\cite{WANG2025111897} extends the Willems' lemma by applying it to the expectation update of the steady-state Kalman filter. However, the expectation alone does not capture the complete distribution of the system outputs, which is crucial for methods such as robust control~\cite{KWAKERNAAK1993255}.

This paper proposes a behavioral CGM that extends the Willems' lemma~\cite{fundamental_lemma} to stochastic LTI systems. The model \res{captures the stochastic nature of LTI systems by sampling from a data-consistent conditional distribution} of future outputs, given past I/O samples and future inputs. \res{Conditional Generative Models (CGMs) have evolved from early conditional GANs~\cite{mirza2014conditional} and conditional VAEs~\cite{sohn2015cvae} to become fundamental tools for machine learning~\cite{ajay2022conditional}. Multiple advanced methods are built upon CGM foundations: score-based diffusion models learn conditional score functions for iterative denoising~\cite{song2021score}, while flow matching approaches train conditional normalizing flows by regressing vector fields of conditional probability paths~\cite{lipman2023flow}. CGMs have \res{also} demonstrated success in diverse applications, from image generation~\cite{mirza2014conditional,sohn2015cvae} to decision-making~\cite{ajay2022conditional}. In this work, we adapt the CGM framework to stochastic LTI systems, avoiding explicit system identification while providing uncertainty quantification.}

The contribution of the paper is threefold:
\begin{itemize}
    \item We propose a behavioral CGM \res{that generates samples from a data-consistent conditional distribution of future outputs, given past input-output data and prescribed future inputs. The model avoids explicit parametric identification while maintaining online computational efficiency.}
    \item We establish the theoretical foundations of the proposed CGM through a two-fold convergence analysis: first proving the asymptotic convergence of generated sample distributions with explicitly quantified rates as the \res{trajectory library} size increases, and then showing that this asymptotic distribution exponentially approaches the optimal Kalman filter prediction as the length of historical data grows.
    \item \res{We develop a stochastic predictive controller based on the proposed generative modeling framework. 
    Numerical results show that the controller achieves reduced constraint violations and competitive control cost, while achieving data and computational efficiency.}
\end{itemize}

\subsubsection*{Paper Structure}
In Section~\ref{sec:problem_formulation}, we formulate the problem of constructing a behavioral CGM for stochastic LTI systems. Section~\ref{sec:method} proposes a behavioral CGM for stochastic LTI systems. Section~\ref{sec:performance_analysis} and Section~\ref{sec:nonGaussian} analyze the performance of the proposed CGM for systems with Gaussian and non-Gaussian noise, respectively. In Section~\ref{sec:control}, we incorporate the proposed model into the predictive control framework. Finally, Section~\ref{sec:simulations} presents numerical results on a three-pulley system and Section~\ref{sec:conclusions} concludes the paper.

\subsubsection*{Notations}
$\mathbb{R}$ is the set of all real numbers. $\mathbb{R}^{a \times b}$ is the set of $a \times b$ real matrices. $I_n$ denotes the $n$-dimensional unit matrix. $\mathbf{0}_n$ denotes an $n$-dimensional all-zero column vector and $\boldsymbol{0}_{m\times n}$ represents an $m\times n$ zero matrix. $\boldsymbol{1}_n$ and $\boldsymbol{1}_{m\times n}$ are similarly defined. $A^\top$ denotes the matrix transpose. $\|A\|_2$ denotes the $2$-norm for a vector $A$ or a matrix $A$. For Hermitian matrices $A\in\mathbb{C}^{n\times n}$, $A\succeq 0$ implies that $A$ is positive semi-definite. The extended expectation $\Ss$ is defined as $\bar{\mathbb E}_t(\phi_t)=\lim_{t\to\infty}\mathbb E(\phi_t)$ for \res{an appropriate} $\phi$.

\res{Following control theory conventions, all lowercase variables with time subscripts ($x_t, y_t, u_t, \phi_t$) represent the system's time-indexed signals. 
In our stochastic setting, unless otherwise specified, both these quantities and their samples are modeled as \emph{random vectors}, which is consistent with the 
standard probabilistic viewpoint adopted in large-sample analyses such as the law of 
large numbers~\cite{chung2000course}.}

\section{Problem Formulation}\label{sec:problem_formulation}
Consider an $n$-dimensional LTI system with $m$ inputs and $p$ outputs taking the following form:
\begin{align}\label{eq:linear_system}
    x_{t+1}&=Ax_{t}+Bu_{t}+w_{t},\nonumber \\
    y_{t}&=Cx_{t}+v_{t},
\end{align}
where $w_{t}$ and $v_{t}$ represent i.i.d. Gaussian process and observation noise with zero mean and covariances $\res{Q\succeq 0}, R\succ 0$, respectively. Without loss of generality, we assume that $(A, B, C)$ is a \res{minimal} realization of the system~\eqref{eq:linear_system}\res{, which implies that the system is both controllable and observable.} Let $\ell$ denote the lag~\cite{coulson2019data} of the system. All parameters $A, B, C, Q, R$ are treated as unknown parameters\footnote{The value of $A,\,B,\,C,\,Q,\,R$ are only used to characterize the performance of the proposed CGM, and are absent in the construction of the CGM.}.

When the system~\eqref{eq:linear_system} is noise-free, i.e., $w_t=0, v_t=0$, the input-output relationship of the system can be exactly represented by a behavioral model, known as the Willems' lemma~\cite{fundamental_lemma}. Specifically, suppose we are provided with a \res{trajectory library} $\mathcal D$, an offline dataset representing the system's behavior, consisting of a single trajectory:
\[\mathcal D=\{\tilde u_{t}, \tilde y_t\}_{t=1}^K.\]
Let $\tilde u=\mathrm{col}(\tilde u_1, \cdots, \tilde u_{K}), \tilde y=\mathrm{col}(\tilde y_1, \cdots, \tilde y_K)$. Moreover, let
$\mathscr H_{T_{ini}+T}(\tilde u)$ denote the following Hankel matrix formed from $\tilde u$:
\[\mathscr H_{T_{ini}+T}(\tilde u)=\begin{bmatrix}
    \tilde u_1 & \cdots & \tilde u_{K-T_{ini}-T+1} \\
    \vdots & \ddots & \vdots \\
    \tilde u_{T_{ini}+T} & \cdots & \tilde u_{K}
\end{bmatrix},\]
and $\mathscr H_{T_{ini}+T}(\tilde y)$ is similarly defined. Moreover, define
\begin{align}
    \begin{bmatrix} \tilde U_p \\ \tilde U_f\end{bmatrix}\triangleq\mathscr H_{T_{ini}+T}(\tilde u), \begin{bmatrix} \tilde Y_p \\ \tilde Y_f \end{bmatrix}\triangleq\mathscr H_{T_{ini}+T}(\tilde y),\label{eq:Yf_def_fundamental_lemma}
\end{align}
where $\tilde U_p$ consists of the first $T_{ini}$ block rows of $\mathscr H_{T_{ini}+T}(\tilde u)$, and $\tilde U_f$ consists of the last $T$ block rows of $\mathscr H_{T_{ini}+T}(\tilde u)$. $\tilde Y_p$ and $\tilde Y_f$ are similarly defined. The subscript $p$ and $f$ represents past and future trajectories respectively, and the subscript $ini$ stands for initial.
\begin{assumption}[\res{Persistency of Excitation}]\label{assump:persistently_exciting}
    \res{The given input-output trajectory $\{\tilde u_t, \tilde y_t\}_{t=1}^{K}$ has length
        $K \ge (m+1)(T_{ini}+T+n)-1$. Moreover, the input sequence $\tilde u$ is persistently exciting of order at least $T_{ini}+T+n$, meaning that 
        $\mathscr H_{T_{ini}+T+n}(\tilde u)$ has full row rank.}
\end{assumption}

Based on the \res{trajectory library} $\mathcal D$ of the noise-free system~\eqref{eq:linear_system}, consider any trajectory $\{u_t, y_t\}_{t=1}^{T_{ini}+T}$ of length $T_{ini}+T$ from the system. For simplicity, we assume that the current time step is $T_{ini}$, which allows us to separate the trajectory into past and future components:
\begin{align}
    u_{ini}&=\mathrm{col}(u_{1}, \cdots, u_{T_{ini}}), y_{ini}=\mathrm{col}(y_{1}, \cdots, y_{T_{ini}}),\nonumber \\
    u_f&=\mathrm{col}(u_{T_{ini}+1}, \cdots, u_{T_{ini}+T}),\nonumber \\
    y_f&=\mathrm{col}(y_{T_{ini}+1}, \cdots, y_{T_{ini}+T}).\label{eq:uini_yini_uf_yf_def}
\end{align}
Here, $u_{ini}, y_{ini}$ represent the past I/O samples, and $u_f, y_f$ correspond to the future inputs and outputs of the system. Then, the Willems' lemma states the following result:
\begin{proposition}[Willems' lemma~\cite{coulson2019data,fundamental_lemma}]\label{prop:fundamental_lemma}
   \res{Assume that the input-output data $\{\tilde u_t,\tilde y_t\}_{t=1}^K$ satisfy Assumption~\ref{assump:persistently_exciting}
    and that $T_{ini} \ge \ell$. Let $(u_{ini},y_{ini},u_f,y_f)$ be any length-$(T_{ini}+T)$ input-output trajectory that is generated by the system~\eqref{eq:linear_system}.
    Then there exists a vector $g\in\mathbb R^{K-T_{ini}-T+1}$ such that}
    \begin{equation}\label{eq:fundamental_lemma}
        \begin{bmatrix} \tilde U_p \\ \tilde Y_p \\ \tilde U_f \\ \tilde Y_f\end{bmatrix}g=\begin{bmatrix} u_{ini} \\ y_{ini} \\ u_f \\ y_f\end{bmatrix}.
    \end{equation}
    Moreover, if $g$ \res{satisfies}:
    \begin{align}
        \begin{bmatrix} \tilde U_p \\ \tilde Y_p \\ \tilde U_f\end{bmatrix}g&=\begin{bmatrix} u_{ini} \\ y_{ini} \\ u_f\end{bmatrix},\label{eq:fundamental_lemma_1}
    \end{align}
    \res{then the associated future output is uniquely determined as $y_f=Y_fg$.}
\end{proposition}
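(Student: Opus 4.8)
The plan is to prove the two assertions separately via the behavioral reading of the stacked Hankel matrix. Throughout write $L\triangleq T_{ini}+T$ and let $\mathcal B_L\subseteq\mathbb R^{(m+p)L}$ denote the restricted behavior, i.e., the set of all length-$L$ input-output trajectories generated by~\eqref{eq:linear_system} in the noise-free case $w_t=v_t=0$. Every such trajectory has the form $(u,\,\mathcal O_L x_1+\mathcal T_L u)$, where $\mathcal O_L=\mathrm{col}(C,CA,\dots,CA^{L-1})$ is the extended observability matrix and $\mathcal T_L$ is the strictly-lower-block-triangular Toeplitz matrix of Markov parameters $CA^{i-j-1}B$ (no feedthrough, since~\eqref{eq:linear_system} has none). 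Hence $\mathcal B_L$ is a linear subspace; controllability makes the parametrizing map $(x_1,u)\mapsto(u,\mathcal O_Lx_1+\mathcal T_Lu)$ attain every $x_1$, and since $\ell$ is the observability index of the minimal realization, $T_{ini}\ge\ell$ forces $\mathcal O_L$ (indeed already $\mathcal O_\ell$) to have rank $n$, which makes the map injective. Therefore $\dim\mathcal B_L=mL+n$.

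First I would prove~\eqref{eq:fundamental_lemma} by showing that the column span of $S\triangleq\mathrm{col}(\tilde U_p,\tilde Y_p,\tilde U_f,\tilde Y_f)$ equals $\mathcal B_L$. Each column of $S$ is a genuine length-$L$ trajectory of the data-generating system, so $\mathrm{range}(S)\subseteq\mathcal B_L$, and it suffices to show $\mathrm{rank}(S)=mL+n$. Using the factorization
\[
S=\begin{bmatrix} I_{mL} & 0\\ \mathcal T_L & \mathcal O_L\end{bmatrix}\begin{bmatrix}\mathscr H_L(\tilde u)\\ X\end{bmatrix},\qquad X=[\,x_1\ \cdots\ x_{K-L+1}\,],
\]
and noting that the left factor has full column rank $mL+n$ because $\mathcal O_L$ has rank $n$, the claim reduces to the classical rank condition $\mathrm{rank}\,\mathrm{col}(\mathscr H_L(\tilde u),X)=mL+n$. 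I expect \textbf{this rank lemma to be the main obstacle}: it is the technical core of the fundamental lemma, and the only place where controllability and the excitation/length bounds of Assumption~\ref{assump:persistently_exciting} are genuinely used. The argument I would give is the standard one: a nonzero left annihilator $[\,\alpha^\top\ \beta^\top\,]$ of $\mathrm{col}(\mathscr H_L(\tilde u),X)$ would, after propagating the state recursion $x_{t+1}=Ax_t+Bu_t$ and using controllability to express $x_1$ through inputs, produce a nontrivial linear relation among the rows of $\mathscr H_{L+n}(\tilde u)$, contradicting persistency of excitation of order $L+n$; the bound on $K$ ensures the matrices have enough columns for this counting to close.

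Given the rank identity, $\mathrm{range}(S)=\mathcal B_L\ni(u_{ini},y_{ini},u_f,y_f)$, which is precisely the existence of $g$ satisfying~\eqref{eq:fundamental_lemma}. For the second claim (reading $\tilde Y_f g$ for the $Y_f g$ in the statement) I would show that $\tilde Y_f g$ is constant over the affine solution set of~\eqref{eq:fundamental_lemma_1}, equivalently that $\tilde Y_f g_0=0$ for every $g_0$ in the kernel of $\mathrm{col}(\tilde U_p,\tilde Y_p,\tilde U_f)$. For such $g_0$, the vector $Sg_0\in\mathcal B_L$ is a system trajectory with zero input over the whole horizon and zero output over the first $T_{ini}\ge\ell$ steps; since $\mathcal O_{T_{ini}}$ has rank $n$, the state at the start of the trajectory—hence the entire state sequence—must be zero, and zero future input then forces the future output $\tilde Y_f g_0$ to vanish.

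Consequently $\tilde Y_f$ is single-valued on the solution set of~\eqref{eq:fundamental_lemma_1}. Since the $g$ obtained in the first part satisfies~\eqref{eq:fundamental_lemma_1} and yields $\tilde Y_f g=y_f$, that single value is $y_f$, so every $g$ satisfying~\eqref{eq:fundamental_lemma_1} reproduces the true future output. This completes the plan; the only nontrivial ingredient beyond elementary linear algebra and the state-space/behavior correspondence is the rank lemma flagged above.
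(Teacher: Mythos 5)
The paper does not prove this proposition: it is stated as a classical result and cited directly from the behavioral-systems literature, so there is no in-paper argument to compare yours against. Judged on its own, your proof follows the standard route: identify the restricted behavior $\mathcal B_L$ as an $(mL+n)$-dimensional subspace (using $T_{ini}\ge\ell$ so that $\mathcal O_\ell$, hence $\mathcal O_L$, has rank $n$), show the data matrix spans it via the factorization through $\mathrm{col}(\mathscr H_L(\tilde u),X)$, and deduce uniqueness of $y_f$ by showing that any kernel element of $\mathrm{col}(\tilde U_p,\tilde Y_p,\tilde U_f)$ corresponds to a zero-input trajectory whose first $T_{ini}\ge\ell$ outputs vanish, hence whose initial state and future outputs vanish. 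The second half is complete and correct, including your observation that the $Y_f$ in the statement should be read as $\tilde Y_f$. The one genuine incompleteness is the rank identity $\mathrm{rank}\,\mathrm{col}(\mathscr H_L(\tilde u),X)=mL+n$, which you rightly flag as the technical core but only sketch; the phrase ``using controllability to express $x_1$ through inputs'' to lift a left annihilator to a row dependence of $\mathscr H_{L+n}(\tilde u)$ is directionally right but elides the actual mechanism of the classical argument, in which controllability is used to construct, for an assumed annihilator $[\alpha^\top\ \beta^\top]$ with $\beta\neq 0$, admissible input--state columns that contradict persistency of excitation of order $L+n$. Since that lemma is precisely the content of the cited fundamental lemma, leaving it at sketch level is defensible for a result the paper itself imports without proof, but it is the step you would need to execute carefully to make the argument self-contained. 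One cosmetic point: your factorization of $S$ is written for the row ordering $\mathrm{col}(\mathscr H_L(\tilde u),\mathscr H_L(\tilde y))$ rather than $\mathrm{col}(\tilde U_p,\tilde Y_p,\tilde U_f,\tilde Y_f)$; the two differ only by a row permutation, which affects neither rank nor range, but the identification deserves an explicit sentence.
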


\begin{rese}
Proposition~\ref{prop:fundamental_lemma} shows that, under suitable assumptions, the future outputs $y_f$ of a deterministic LTI system are \emph{uniquely determined} from past inputs and outputs together with prescribed future inputs. This deterministic characterization reduces the prediction of future outputs to solving a linear equation. However, this determinism no longer holds for the stochastic system~\eqref{eq:linear_system}, since the outputs become random vectors driven by the noise processes. Hence, the value of $y_f$ is not uniquely determined but varies across realizations, and the relevant object becomes the probability distribution of $y_f$ given past information and future inputs.

To formalize this, we consider a probability space
\((\Omega,\mathcal{G},\mathbb{P}_u)\), where the subscript $u$
indicates that the probability measure depends on the prescribed
input sequence \(\{u_t\}\). The process noise \(w_t\) and
measurement noise \(v_t\) are modeled as random vectors on
this space, and the outputs
\(y_t\) are random vectors on
\((\Omega,\mathcal{G},\mathbb{P}_u)\) as measurable functions 
of the noises and the inputs.

The central object of interest is the conditional distribution
\begin{equation}\label{eq:true_conditional}
    p_u(y_f \mid \mathcal{F}^y_{T_{ini}}),
\end{equation}
where $\mathcal{F}^y_{T_{ini}}=\sigma(y_t\mid t\leq T_{ini})$ is the $\sigma$-algebra generated by past outputs up to time $T_{ini}$. For notational simplicity, and with a slight abuse of notation, we equivalently write~\eqref{eq:true_conditional} as
\begin{equation}\label{eq:cond_simplified}
    p_{\mathrm{sys}}(y_f \mid \mathcal{F}_{T_{ini}}, u_f),
\end{equation}
where $\mathcal{F}_{T_{ini}}=\sigma(u_{t},y_{t}\mid t\leq T_{ini})$ includes both past inputs and outputs, and $u_f$ are prescribed future inputs. We refer to~\eqref{eq:true_conditional}--\eqref{eq:cond_simplified} as the \emph{true stochastic system model}.

\begin{remark}
    The key quantity for system modeling is the conditional distribution $p_{\mathrm{sys}}(y_{f} \mid \mathcal{F}_{T_{ini}}, u_f)$, which characterizes how future outputs depend probabilistically on past observations and future inputs. 
    Since this distribution is generally difficult to characterize without perfect knowledge of system parameters, the literature has proposed various approximations, including statistical moments, series-expansion coefficients, and sampled trajectories, i.e., sampling from $p_{\mathrm{sys}}(y_{f} \mid \mathcal{F}_{T_{ini}}, u_f)$~\cite{faulwasser_behavioral_2023,dorfler_bridging_2023,WANG2025111897,kladtke_towards_2025}. 
    In this paper, we adopt the sampled-trajectory representation, as it naturally connects to many control design methods such as scenario-based MPC.
\end{remark}

Motivated by this viewpoint, we introduce the concept of a Conditional Generative Model (CGM). 
The role of a CGM is to approximate the true stochastic system model~\eqref{eq:cond_simplified} by generating samples from an approximate conditional distribution $q$.

\begin{definition}[Conditional Generative Model (CGM)]\label{def:cgm}
A CGM for an LTI system is an algorithm that, given past information $\mathcal{F}_{T_{ini}}$ and prescribed future inputs $u_f$, generates random samples of the future output sequence
\[
    \bar y_f \sim q(\cdot \mid \mathcal{F}_{T_{ini}}, u_f),
\]
where $q$ is a conditional distribution that approximates the true distribution $p_{\mathrm{sys}}(y_f \mid \mathcal{F}_{T_{ini}}, u_f)$ in~\eqref{eq:cond_simplified}.
\end{definition}
Note that Definition~\ref{def:cgm} formalizes a CGM at an abstract level. The assumptions on the available system information depend on the particular realization of the framework. 

Ideally, when system noise is Gaussian and all system parameters $(A, B, C, Q, R)$ are known, a Kalman filter initialized with the true initial state distribution fully characterizes the Gaussian posterior distribution $p_{\mathrm{sys}}(y_f\mid \mathcal F_{T_{ini}}, u_f)$ through the exact mean and covariance predicted by the filter. Sampling from this Gaussian distribution yields a CGM with $q(\bar y_f\mid \mathcal F_{T_{ini}}, u_f) = p_{\mathrm{sys}}(y_f\mid \mathcal F_{T_{ini}}, u_f)$, which represents the theoretical performance upper bound.

In contrast, we propose a \emph{behavioral CGM} that requires neither prior knowledge of the system parameters nor explicit parameter identification. The only assumption is that the noise is known to be Gaussian distributed\footnote{We relax this assumption to noise with zero mean and bounded second moment in Section~\ref{sec:nonGaussian}.}, while the system matrices $(A,B,C)$, the noise covariances $(Q,R)$, and the initial state distribution are all treated as unknown.
The algorithm relies solely on pre-collected input-output trajectory data~$\mathcal{D}$ together with the current information $(u_f,\mathcal{F}_{T_{ini}})$ to predict future outputs $y_f$.

Compared with the ideal Kalman filter benchmark, our approximation of $p_{\mathrm{sys}}$ is constrained by two practical challenges: 
(i) the trajectory library $\mathcal{D}$ has finite size, and 
(ii) only a finite past horizon $(u_{ini},y_{ini})$ is available instead of the full information set $\mathcal{F}_{T_{ini}}$. 
These constraints induce a gap between the CGM distribution 
$q(\bar y_f \mid u_{ini}, y_{ini}, u_f; \mathcal{D})$ and the true stochastic system model $p_{\mathrm{sys}}(y_f \mid \mathcal{F}_{T_{ini}}, u_f)$. 
The objective of this paper is to design a behavioral CGM that narrows this gap and to establish guarantees quantifying how the gap decreases with the trajectory library size and the truncation length $T_{ini}$.

\end{rese}

\subsection{\res{Trajectory} Library Construction}\label{subsec:data_collection}
Before introducing the proposed behavioral CGM and \res{analyzing} its performance, we introduce additional assumptions regarding the \res{trajectory library} $\mathcal D$.

We assume that $\mathcal D$ either contains \emph{one single long trajectory} generated by system~\eqref{eq:linear_system} or \emph{multiple independent trajectories of the same length}. Moreover, we assume that the inputs $u_t$ in \eqref{eq:linear_system} are generated by a linear controller:
\begin{assumption}[Control Inputs in \res{the Trajectory Library}]\label{assump:input_exciting}
    The inputs in the \res{trajectory library} are generated by the following \res{$n_{\mathrm{ctrl}}$-dimensional} stabilizing controller:
    \begin{align}
        \phi_{t+1}&=\res{A_{\mathrm{ctrl}}}\phi_t+\res{B_{\mathrm{ctrl}}}y_t, \nonumber \\
        u_t&=\res{C_{\mathrm{ctrl}}}\phi_t+\res{\nu_{t}}, \label{eq:stabilizing_controller}
    \end{align}
    where \res{$\nu_{t}$} is i.i.d. Gaussian noise with zero mean and covariance $\res{R_{\mathrm{ctrl}}\succ 0}$. We assume that the internal state $\phi_t$ is known and the initial state $\phi_1\sim\mathcal N(0, \Sigma_\phi)$ where $\Sigma_\phi\succ 0$. We further assume that $(\res{A_{\mathrm{ctrl}}, B_{\mathrm{ctrl}}, C_\mathrm{ctrl}})$ is a minimum realization of the controller~\eqref{eq:stabilizing_controller} without loss of generality \res{and $(A_{\mathrm{ctrl}}, B_{\mathrm{ctrl}})$ has controllability index $\kappa_{\mathrm{ctrl}}$}.
\end{assumption}
\begin{remark}
    The controller described in~\eqref{eq:stabilizing_controller} encompasses most linear control schemes with state or output feedback. For instance, white noise inputs \res{are obtained by setting $u_t = \nu_t$. Such white noise excitation is commonly used in system identification~\cite{ljung2007practical,van_den_hof_system_2005,hof_modelling_2000}. PID controller can also be represented in state-space form~\eqref{eq:stabilizing_controller} with appropriate parameter choices.}
\end{remark}

If the \res{trajectory library} consists of multiple independent trajectories, we assume, without loss of generality, that each trajectory has the same length \(T_{ini} + T\), and the number of trajectories satisfies \(N \geq m(T_{ini}+T)+T_{ini}p+n\). We then arrange the data into the following matrices, with each column representing a trajectory:
\[\res{\check U^{(N)}}=\begin{bmatrix}
    \check u_1^{(1)} & \cdots & \check u_1^{(N)} \\
    \vdots & \ddots & \vdots \\
    \check u_{T_{ini}+T}^{(1)} & \cdots & \check u_{T_{ini}+T}^{(N)}
\end{bmatrix},\]
\[ \res{\check Y^{(N)}}=\begin{bmatrix}
    \check y_1^{(1)} & \cdots & \check y_1^{(N)} \\
    \vdots & \ddots & \vdots \\
    \check y_{T_{ini}+T}^{(1)} & \cdots & \check y_{T_{ini}+T}^{(N)}
\end{bmatrix},\]
\begin{equation}
    \res{\check\Phi^{(N)}}=\begin{bmatrix}
    \check\phi_1^{(1)} & \cdots & \check\phi_1^{(N)} 
\end{bmatrix},\label{eq:Phi_def_multiple}
\end{equation}
where the superscript \((i)\) indicates the \(i\)-th trajectory, and the check mark signifies that the samples are drawn from multiple independent trajectories. \res{$\check\Phi^{(N)}$} consists of the first internal state of the controller in each trajectory.

Similar to the \res{Willems'} lemma, we also partition the matrices into past and future blocks by denoting the first $T_{ini}$ block rows of \res{$\check U^{(N)}$} and \res{$\check Y^{(N)}$} as \res{$\check U_p^{(N)}$} and \res{$\check Y_p^{(N)}$}, and the last $T$ block rows as \res{$\check U_f^{(N)}$} and \res{$\check Y_f^{(N)}$}, respectively. For convenience, we concatenate \res{$\check U_p^{(N)}, \check Y_p^{(N)}, \check U_f^{(N)}$} into a single matrix:
\begin{equation}\label{eq:Z_Yf_def_multiple}
    \res{\check Z^{(N)} = \mathrm{col}(\check U_p^{(N)}, \check Y_p^{(N)}, \check U_f^{(N)}).}
\end{equation}
The \res{trajectory library} is then denoted as:
\[
\res{\mathcal{\check D}^{(N)} = \{\check \Phi^{(N)},\check Z^{(N)}, \check Y_f^{(N)}\}.}
\]
\begin{remark}
    In the context of the closed-loop controller described in Assumption~\ref{assump:input_exciting}, incorporating the internal state $\phi_t$ of the controller into the \res{trajectory library} is crucial \res{because the CGM must remove the influence of the offline controller 
    to recover the correct conditional distribution}, as demonstrated in Section~\ref{sec:performance_analysis}. On the other hand, if the inputs within the \res{trajectory library} are open-loop, \res{$\Phi^{(N)}$} can be excluded from the library.
\end{remark}

For the single-trajectory case (as discussed in Section~\ref{sec:problem_formulation}), similar matrices $\tilde U_p, \tilde U_f, \tilde Y_p,\tilde Y_f$ are constructed as Hankel matrices with the assumption that the length of the trajectory $K\geq (m+1)(T_{ini}+T)+T_{ini}p+n-1$. Next, we concatenate the internal states of the controller $\tilde\phi_t$ that generate the first inputs $\tilde u_i$ in the $i$-th column of $\tilde U$ into:
\begin{align}
    \tilde\Phi=\begin{bmatrix} \tilde\phi_1 & \cdots & \tilde\phi_{K-T_{ini}-T+1} \end{bmatrix}.\label{eq:Phi_def_single}
\end{align}
For simplicity of the subsequent discussions, we assume that the width of the Hankel matrices $\tilde U_p,\tilde U_f, \tilde Y_p, \tilde Y_f$ is identical to that of \res{$\check U^{(N)}, \check Y^{(N)}$} in the multi-trajectory library \res{$\mathcal{\check D}^{(N)}$}, i.e., 
\begin{equation}\label{eq:width_assump}
    K-T_{ini}-T+1=N,
\end{equation}
and denote the corresponding matrices with superscript $N$. Then, we concatenate \res{$\tilde U_p^{(N)}, \tilde Y_p^{(N)}, \tilde U_f^{(N)}$} into a single matrix:
\begin{equation}\label{eq:Z_def_single}
    \res{\tilde Z^{(N)}=\mathrm{col}(\tilde U_p^{(N)}, \tilde Y_p^{(N)}, \tilde U_f^{(N)}).}
\end{equation}
Thus, the \res{trajectory library} with a single trajectory is denoted as:
\[\res{\mathcal{\tilde D}^{(N)}=\{\tilde \Phi^{(N)}, \tilde Z^{(N)}, \tilde Y_f^{(N)}\}.}\]

To unify the \res{notation} for both behavior libraries, we introduce \res{$Z^{(N)}$} to represent either \res{$\check Z^{(N)}$} or \res{$\tilde Z^{(N)}$}. Similarly, \res{$\check Y_f^{(N)}$} and \res{$\tilde Y_f^{(N)}$} are unified as \res{$Y_f^{(N)}$}, and \res{$\check\Phi^{(N)}$} and \res{$\tilde\Phi^{(N)}$} are unified as \res{$\Phi^{(N)}$}. Consequently, the unified notation for both behavior libraries is:
\begin{align}
    \res{\mathcal D^{(N)}=\{\Phi^{(N)}, Z^{(N)}, Y_f^{(N)}\},}\label{eq:behavior_library}
\end{align}
which captures the information of the stochastic system~\eqref{eq:linear_system}. 

In the following sections, based on \res{$\mathcal D^{(N)}$}, we propose a behavioral CGM for the stochastic LTI system~\eqref{eq:linear_system} and analyze its performance.

\section{Conditional Generative Model}\label{sec:method}
This section presents a behavioral CGM for the stochastic LTI system~\eqref{eq:linear_system}. Specifically, consider the trajectory $(u_{ini}, y_{ini}, u_f, y_f)$ formulated in Section~\ref{sec:problem_formulation}. At the current time step $T_{ini}$, define the known \emph{initial trajectory} as:
\begin{equation}\label{eq:z_def}
   z_{T_{ini}} = \mathrm{col}(u_{ini}, y_{ini}, u_{f}).
\end{equation}
Then, to sample $\bar y_f$ based on $z_{T_{ini}}$ and the \res{trajectory library $\mathcal D^{(N)}$}, we propose to randomly sample $\alpha$ \res{from the solution set}:
\begin{align}
   \res{\left\{\,\alpha\in\mathbb R^N \;\middle|\;
   \underbrace{\begin{bmatrix} \Phi^{(N)} \\ Z^{(N)}\end{bmatrix}}_{\Xi^{(N)}}\alpha
   = \begin{bmatrix} 0 \\ z_{T_{ini}}\end{bmatrix}
   \right\}},\label{eq:alpha_equation}
\end{align}
and the generated future outputs are:
\[\bar y_f=\res{Y_f^{(N)}}\alpha.\]
To sample $\alpha$, we \res{propose to} decompose $\alpha$ into a special solution part and a \res{general solution} part:
\begin{align}
   \alpha&=(\arg\min_g\|g\|_2^2\mid \res{\Xi^{(N)}}g=\mathrm{col}(0, z_{T_{ini}}))\nonumber \\
   &\qquad\quad+(\beta\sim\mathcal N(0, \frac{1}{N}I_N)\mid \res{\Xi^{(N)}}\beta=0).\label{eq:alpha_decomposition}
\end{align}
\res{The first term is the special solution that satisfies the equality constraint with minimum norm. Under Gaussian noise models, it corresponds to the maximum-likelihood estimate and serves as the deterministic baseline. The second term exploits the null space degrees of freedom to model conditional uncertainty. The Gaussian sampling is consistent with the system's 
noise model, while the $1/N$ variance scaling ensures that the resulting covariance 
converges at the correct rate as the trajectory library grows. Section~\ref{sec:performance_analysis} further 
shows that these constructions provide asymptotically correct conditional means and 
covariances for the generated $\check y_f$.}

\vspace{-\baselineskip}%
\begin{rese}
    \begin{remark}
         Inspired by the Willems' lemma for noise-free systems, the equality constraint in the second row block of~\eqref{eq:alpha_equation} ensures that the linear combination of trajectories in $\mathcal{D}^{(N)}$ matches the given initial trajectory, while the constraint $\Phi^{(N)}\alpha = 0$ eliminates the influence of offline controller states, ensuring the generated distribution reflects open-loop system dynamics rather than inconsistent information specific to the data collection phase.
    \end{remark}
\begin{remark}\label{rem:xi_full_rank}
   We prove in Appendix~\ref{appendix:xi_full_rank} that under Assumption~\ref{assump:input_exciting} and when $N\geq (r+1)(T_{ini}+T)+r\kappa_{\mathrm{ctrl}}$ with $r=n_{\mathrm{ctrl}}+m(T_{ini}+T)+pT_{ini}$, $\Xi^{(N)}$ has full row rank almost surely. As a result, the equality constraint in~\eqref{eq:alpha_equation} is feasible and both components of $\alpha$ in~\eqref{eq:alpha_decomposition} are well-defined.
\end{remark}
\end{rese}
\vspace{-\baselineskip}%
\begin{remark}\label{rem:interpretation}
   The special solution component in~\eqref{eq:alpha_decomposition} is given by $\Xi^{(N)\dagger}\res{\zeta_{T_{ini}}}$. \res{On the other hand, the random part $(\beta\sim\mathcal{N}(0, \frac{1}{N}I_N)\mid \Xi^{(N)}\beta=0)$ represents a conditional random vector. Since $\beta$ is Gaussian and $\Xi^{(N)}\beta$ is a linear transformation of $\beta$, $(\beta, \Xi^{(N)}\beta)$ is jointly Gaussian. Therefore, the conditional distribution of $\beta$ given $\Xi^{(N)}\beta=0$ is well-defined and equivalent in distribution to $(I-\Xi^{(N)\dagger}\Xi^{(N)})\beta'$}, where $\beta'\sim\mathcal{N}(0, \frac{1}{N}I_N)$. 
    \res{As a result, the deterministic component of the CGM output is given by
    \(
    \bar y_{f, det} = \Theta_f^{(N)} z_{T_{ini}}, 
    \text{with}\ \Theta_f^{(N)} = Y_f^{(N)}\Xi^{(N)\dagger}\mathrm{col}(0, I_{T_{\mathrm{ini}}(m+p)+Tm}),
    \)
    and the stochastic component is
    \(
    \bar\beta = Y_f^{(N)}(I-\Xi^{(N)\dagger}\Xi^{(N)})\beta',\beta'\sim\mathcal{N}(0,\tfrac{1}{N}I_N).
    \)}
\end{remark}
\res{The procedure of the proposed CGM is summarized in Algorithm~\ref{alg:nonparametric_generative_model}. 
In accordance with the decomposition in~\eqref{eq:alpha_decomposition}, the algorithm consists of a deterministic minimum-norm solution consistent with the initial trajectory, 
and a stochastic perturbation drawn from the null space of $\Xi^{(N)}$ to represent conditional 
uncertainty.}

\begin{algorithm}
\caption{Computational Procedure for Conditional Generative Model of Stochastic LTI Systems \res{(see Remark~\ref{rem:interpretation} for definitions of notations such as $\Theta_f^{(N)}$ and $\bar{\beta}$)}.}
\label{alg:nonparametric_generative_model}
\begin{algorithmic}[1]
\REQUIRE Trajectory library \res{$\mathcal D^{(N)}$}, initial trajectory $u_{ini}, y_{ini}, u_f$
\STATE \res{Construct $\Xi^{(N)}$} according to~\eqref{eq:alpha_equation} and \res{$Y_f^{(N)}$} according to~\eqref{eq:Yf_def_fundamental_lemma} for $\mathcal{\tilde D}^{(N)}$ or ~\eqref{eq:Phi_def_multiple} for $\mathcal{\check D}^{(N)}$
\STATE $\res{\Theta_f^{(N)}}\leftarrow \res{Y_f^{(N)}\Xi^{(N)\dagger}}\mathrm{col}(0, I_{T_{ini}(m+p)+Tm})$
% \STATE $\Pi_\perp \leftarrow I-\res{\Xi^{(N)\dagger}\Xi^{(N)}}$
\STATE $z_{T_{ini}}\leftarrow\mathrm{col}(u_{ini}, y_{ini}, u_f)$
\STATE $\res{\bar y_{f, det}}\leftarrow \res{\Theta_f^{(N)}} z_{T_{ini}}$ \res{// Deterministic prediction}
\STATE Sample $\beta \sim \mathcal N(0, \frac{1}{N}I_N)$
\STATE $\res{\bar\beta}\leftarrow \res{Y_f^{(N)}(I-\res{\Xi^{(N)\dagger}\Xi^{(N)}})}\beta$
\STATE $\res{\bar y_f}\leftarrow \res{\bar y_{f, det}+\bar\beta}$ \res{// Add randomness for uncertainty}
\RETURN $\res{\bar y_f}$
\end{algorithmic}
\end{algorithm}
\vspace{-0.2cm}

\begin{remark}
	To incorporate the CGM into predictive controllers, the CGM is required to generate samples online in real-time. To make this computation efficient, steps \res{1, 2, 5, 6} in Algorithm~\ref{alg:nonparametric_generative_model} can be performed offline, while the online computation of $\res{\check y_f}$ involves only steps \res{3, 4, 7}. This includes a single matrix-vector multiplication $\Theta z_{T_{ini}}$ in line \res{4}, with complexity $O(p(m+p)TT_{ini})$, and a vector addition in line \res{7}, with complexity $O(Tp)$. Hence, the overall online computation complexity \res{for generating $T$ future outputs} is $O(p(m+p)TT_{ini})$.
\end{remark}

We have proposed a behavioral CGM for the stochastic LTI system~\eqref{eq:linear_system}. In the following section, we derive theoretical guarantees on the performance of the proposed CGM.

\section{Performance Analysis}\label{sec:performance_analysis}
This section analyzes the performance of the proposed behavioral CGM. First, we characterize the conditional distribution of samples $\bar y_f$ generated by the CGM. Then, in Section~\ref{subsec:convergence}, \res{we analyze the convergence of this distribution as the trajectory library size $N$ increases and characterize the resulting asymptotic distribution.} Finally, in Section~\ref{subsec:optimality_gap}, we assess the gap between this asymptotic distribution and the true posterior distribution generated from the Kalman filter.
\begin{assumption}\label{assump:multiple_trajectories}
    \begin{enumerate}
        \item The initial state of the current online trajectory is Gaussian distributed.\label{assump:current_traj_initial_state}
        \setcounter{enumi}{2}
        \begin{enumerate}[label=\arabic{enumi}\alph*)]
            \item For the multi-trajectory behavior library \res{$\mathcal{\check D}^{(N)}$}, the initial state $\check x_1^{(i)}$ of each trajectory is independently sampled from the same Gaussian distribution, $\check x_{1}^{(i)}\sim \mathcal N(0, \check \Sigma)$, where $\check\Sigma\succ 0$. Moreover, we assume that $\check\Sigma-\check\Sigma_{x\phi}\Sigma_\phi^\dagger\check\Sigma_{x\phi}^\top\succ 0$, where $\check\Sigma_{x\phi}=\mathbb E(\check x_1^{(1)}\check\phi_1^{(1)\top})$ and $\Sigma_\phi$ is introduced in Assumption~\ref{assump:input_exciting}.\label{assump:initial_state}
            \item For the single-trajectory behavior library \res{$\mathcal{\tilde D}^{(N)}$}, the initial state $\tilde x_1$ of the trajectory is Gaussian distributed.
        \end{enumerate}
    \end{enumerate}
\end{assumption}

The following proposition characterizes the distribution of the generated $\bar y_f$ using Algorithm~\ref{alg:nonparametric_generative_model}:
\begin{proposition}\label{prop:model_distribution_single}
    The conditional distribution of the generated $\res{\bar y_{f}}$ \res{given $z_{T_{ini}},\mathcal D^{(N)}$} satisfies:
    \begin{align}
        &\mathbb E(\res{\bar y_{f}}\mid z_{T_{ini}}; \res{\mathcal{D}^{(N)}})=\res{\Theta_f^{(N)}} z_{T_{ini}}, \nonumber\\
        &\res{\Sigma_f^{(N)}}\triangleq\mathrm{Cov}(\res{\bar y_{f}}\mid z_{T_{ini}}; \res{\mathcal{D}^{(N)}}) \nonumber \\
        &=\frac{1}{N}\res{Y_f^{(N)}(I-\Xi^{(N)\dagger} \Xi^{(N)})(I-\Xi^{(N)\dagger} \Xi^{(N)})^\top Y_f^{(N)\top}}.\label{eq:generated_distribution}
    \end{align}
\end{proposition}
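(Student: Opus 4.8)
The plan is to reduce the statement to a direct computation with an affine transformation of a Gaussian vector. The key observation is that, once we condition on $z_{T_{ini}}$ and on $\mathcal D^{(N)}$, the matrices $\Xi^{(N)}$, $Y_f^{(N)}$, and the minimum-norm particular solution are all deterministic, so the only remaining randomness is the sample $\beta'\sim\mathcal N(0,\tfrac1N I_N)$ drawn in line~5 of Algorithm~\ref{alg:nonparametric_generative_model}.

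First I would invoke the decomposition~\eqref{eq:alpha_decomposition} together with Remark~\ref{rem:interpretation}: the conditional law $(\beta\sim\mathcal N(0,\tfrac1N I_N)\mid \Xi^{(N)}\beta=0)$ is well-defined (feasibility of the constraint being guaranteed by Remark~\ref{rem:xi_full_rank}) and coincides with the law of $(I-\Xi^{(N)\dagger}\Xi^{(N)})\beta'$ with $\beta'\sim\mathcal N(0,\tfrac1N I_N)$. Hence
\[
\bar y_f = Y_f^{(N)}\Xi^{(N)\dagger}\!\begin{bmatrix}0\\ z_{T_{ini}}\end{bmatrix} + Y_f^{(N)}\bigl(I-\Xi^{(N)\dagger}\Xi^{(N)}\bigr)\beta'.
\]
Writing $\mathrm{col}(0,z_{T_{ini}})=\mathrm{col}(0,I_{T_{ini}(m+p)+Tm})\,z_{T_{ini}}$ and using the definition $\Theta_f^{(N)}=Y_f^{(N)}\Xi^{(N)\dagger}\mathrm{col}(0,I_{T_{ini}(m+p)+Tm})$ from Remark~\ref{rem:interpretation}, this is exactly $\bar y_f=\Theta_f^{(N)}z_{T_{ini}}+\bar\beta$ with $\bar\beta=Y_f^{(N)}(I-\Xi^{(N)\dagger}\Xi^{(N)})\beta'$.

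Next, conditioning on $z_{T_{ini}}$ and $\mathcal D^{(N)}$ and using linearity of expectation with $\mathbb E\beta'=0$ gives $\mathbb E(\bar y_f\mid z_{T_{ini}};\mathcal D^{(N)})=\Theta_f^{(N)}z_{T_{ini}}$, the first claim. For the covariance, I would apply the affine-transformation rule $\mathrm{Cov}(Ax)=A\,\mathrm{Cov}(x)\,A^\top$ with $A=Y_f^{(N)}(I-\Xi^{(N)\dagger}\Xi^{(N)})$ and $\mathrm{Cov}(\beta')=\tfrac1N I_N$, which yields
\[
\Sigma_f^{(N)} = \tfrac1N\,Y_f^{(N)}\bigl(I-\Xi^{(N)\dagger}\Xi^{(N)}\bigr)\bigl(I-\Xi^{(N)\dagger}\Xi^{(N)}\bigr)^\top Y_f^{(N)\top},
\]
which is~\eqref{eq:generated_distribution}. (One could further simplify using that $I-\Xi^{(N)\dagger}\Xi^{(N)}$ is the symmetric idempotent orthogonal projector onto $\ker\Xi^{(N)}$, but this is not needed for the stated form.)

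I do not anticipate a genuine obstacle: the only point requiring care is the well-definedness and the explicit representation of the conditional Gaussian $(\beta\mid\Xi^{(N)}\beta=0)$, which has already been established through the joint Gaussianity of $(\beta,\Xi^{(N)}\beta)$ in Remark~\ref{rem:interpretation} and the almost-sure full-row-rank property of $\Xi^{(N)}$ in Remark~\ref{rem:xi_full_rank}. Everything else is the standard mean/covariance bookkeeping for affine images of Gaussian vectors.
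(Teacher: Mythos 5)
Your proposal is correct and takes essentially the same route as the paper, which simply observes that the result "follows directly from Remark~\ref{rem:interpretation}": you expand $\bar y_f$ into the minimum-norm particular solution $\Theta_f^{(N)}z_{T_{ini}}$ plus the null-space perturbation $Y_f^{(N)}(I-\Xi^{(N)\dagger}\Xi^{(N)})\beta'$ and apply linearity of expectation and the affine covariance rule. The only addition you make is to spell out the well-definedness of the conditional Gaussian via Remarks~\ref{rem:interpretation} and~\ref{rem:xi_full_rank}, which is exactly the justification the paper relies on implicitly.
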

\res{The result follows directly from Remark~\ref{rem:interpretation}, which provides explicit expressions for both components in~\eqref{eq:alpha_decomposition}.}
In the following subsection, we analyze the convergence properties of both the conditional expectation coefficient \res{$\Theta_f^{(N)}$} and the conditional covariance \res{$\Sigma_f^{(N)}$} as \res{trajectory} library size $N\to\infty$.

\subsection{Convergence of Conditional Distribution}\label{subsec:convergence}
In this subsection, we analyze the convergence of the distribution of the generated $\bar y_f$ as $N$ tends to infinity. 

\res{To derive the explicit form of the asymptotic distribution, we start from the ideal full-information case in which a Kalman filter (KF) has access to the true system parameters $(A,B,C,Q,R)$ and the initial state distribution. 
In this setting, the KF provides the true posterior distribution of future outputs $p_{\mathrm{sys}}(y_f \mid \mathcal F_{T_{ini}}, u_f)$~\cite{anderson2005optimal}.}
\begin{rese}
\begin{proposition}\label{prop:kf_distribution}
    Consider the KF initialized with the true initial state distribution 
    $x_1 \sim \mathcal N(\mu,P)$.
    Conditioned on $(u_{ini},y_{ini},u_f)$,  
    the distribution of $y_f$ generated by system~\eqref{eq:linear_system} 
    is Gaussian with mean
    \begin{align}
        \mathbb E\!\left(y_f \mid z_{T_{ini}}\right)
        = \eta_f(P)\, z_{T_{ini}} 
        + O_f\, \hat\psi_{T_{ini}+1}^-(P)\, \mu,
    \end{align}
    and covariance
    \begin{align}
        \mathcal Y_f(P)
        &\triangleq \mathrm{Cov}\left(y_f \mid z_{T_{ini}}\right)\nonumber \\
        &= O_f P_{T_{ini}+1}^-(P) O_f^\top
        + G_f Q_f G_f^\top
        + R_f. \label{eq:conditional_distribution}
    \end{align}
    Here, \(\eta_f: \mathbb{R}^{n\times n} \to \mathbb{R}^{Tp \times (T_{ini}(m+p)+Tm)}\),
    \(\hat\psi_{T_{ini}+1}^-: \mathbb{R}^{n\times n} \to \mathbb{R}^n\),
    and \(P_{T_{ini}+1}^-: \mathbb R^{n\times n} \to \mathbb R^{n\times n}\)
    are linear operators that depend on the initial covariance \(P\).
    The matrices \(O_f,\, G_f,\, Q_f,\, R_f\) are determined by the system parameters 
    \((A,B,C,Q,R)\).
\end{proposition}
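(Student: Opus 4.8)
The plan is to recognize Proposition~\ref{prop:kf_distribution} as the classical finite-horizon Kalman-filter prediction identity~\cite{anderson2005optimal}, rewritten so that its dependence on the prior covariance $P$ is made explicit, and to establish it by stacked linear algebra together with Gaussian conditioning. First I would unroll~\eqref{eq:linear_system} over the two horizons: over the past, $y_{ini} = \mathcal{O}_{ini} x_1 + \mathcal{T}_{ini} u_{ini} + \mathcal{N}_{ini} w_{ini} + v_{ini}$ with $\mathcal{O}_{ini} = \mathrm{col}(C, CA, \dots, CA^{T_{ini}-1})$ and $\mathcal{T}_{ini},\mathcal{N}_{ini}$ the corresponding block-Toeplitz maps, while $x_{T_{ini}+1} = A^{T_{ini}} x_1 + (\text{affine in } u_{ini}, w_{1:T_{ini}})$; over the future, $y_f = O_f x_{T_{ini}+1} + G_f^{\mathrm{in}} u_f + G_f w_f + v_f$ with $O_f = \mathrm{col}(C, CA, \dots, CA^{T-1})$, $G_f^{\mathrm{in}}$ the input-to-output Toeplitz map, $G_f$ the process-noise-to-output Toeplitz map, and $w_f, v_f$ the stacked future process and measurement noises. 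Setting $Q_f = I \otimes Q$ and $R_f = I \otimes R$ then matches the matrices named in the statement.

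Next I would exploit joint Gaussianity. Conditioned on the prescribed input sequence, the initial state and all the noises form a jointly Gaussian family with mutually independent blocks, so $(x_{T_{ini}+1}, y_{ini}, y_f)$ is jointly Gaussian. Conditioning on $y_{ini}$ — equivalently, running the Kalman filter of~\eqref{eq:linear_system} from $x_1 \sim \mathcal{N}(\mu, P)$ with inputs $u_{ini}$ and measurements $y_{ini}$ — yields the one-step predictive law $x_{T_{ini}+1} \mid z_{T_{ini}} \sim \mathcal{N}(\hat{x}^-_{T_{ini}+1}, P^-_{T_{ini}+1})$, where $\hat{x}^-_{T_{ini}+1}$ is affine in $(\mu, u_{ini}, y_{ini})$ with coefficient matrices determined by $(A,B,C,Q,R)$ and $P$, and $P^-_{T_{ini}+1}$ is produced from $P$ by iterating the Riccati measurement/time updates and therefore depends only on $P$ and the system parameters, not on the data. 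Writing $\hat{x}^-_{T_{ini}+1} = \hat\psi^-_{T_{ini}+1}(P)\,\mu + \Gamma_u(P)\,u_{ini} + \Gamma_y(P)\,y_{ini}$ and $P^-_{T_{ini}+1} = P^-_{T_{ini}+1}(P)$ records exactly the $P$-dependence asserted in the proposition.

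Finally I would propagate to $y_f$. Since $w_f$ and $v_f$ are independent of $\mathcal{F}_{T_{ini}}$, hence of $z_{T_{ini}}$, conditioning the future representation on $z_{T_{ini}}$ simply substitutes the predictive law for $x_{T_{ini}+1}$ and adds the independent future-noise covariance: $y_f \mid z_{T_{ini}}$ is Gaussian with mean $O_f \hat{x}^-_{T_{ini}+1} + G_f^{\mathrm{in}} u_f$ and covariance $O_f P^-_{T_{ini}+1}(P) O_f^\top + G_f Q_f G_f^\top + R_f$, which is~\eqref{eq:conditional_distribution}. Defining $\eta_f(P) := [\,O_f\Gamma_u(P)\;\;O_f\Gamma_y(P)\;\;G_f^{\mathrm{in}}\,]$ gives $O_f\hat{x}^-_{T_{ini}+1} + G_f^{\mathrm{in}} u_f = \eta_f(P)\,z_{T_{ini}} + O_f\hat\psi^-_{T_{ini}+1}(P)\,\mu$, which is the claimed mean, thereby identifying all the named operators and matrices.

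I do not anticipate a genuine mathematical obstacle: the result is classical and requires only $R \succ 0$ (so the innovation covariances remain invertible) and $P \succeq 0$, while minimality of $(A,B,C)$ and the persistency-of-excitation assumptions are not used in this particular statement. The care needed is in bookkeeping — keeping the block-Toeplitz index ranges consistent so that $w_{T_{ini}}$ is absorbed into $x_{T_{ini}+1}$ rather than double-counted, and cleanly separating the \emph{affine} dependence of the filtered mean on the data (carried by $\eta_f(P)$ and $\hat\psi^-_{T_{ini}+1}(P)$) from the \emph{nonlinear}, Riccati-type dependence of $P^-_{T_{ini}+1}(P)$ on the prior $P$, which is the point at which $\hat\psi^-_{T_{ini}+1}$ and $P^-_{T_{ini}+1}$ should be read as matrix-valued maps of $P$.
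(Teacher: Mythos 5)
Your proposal is correct and follows essentially the same route as the paper's proof in Appendix~A: run the Kalman filter from $x_1\sim\mathcal N(\mu,P)$ to obtain the affine-in-$(\mu,u_{ini},y_{ini})$ predictive law for $x_{T_{ini}+1}$, unroll the future dynamics as $y_f=O_f x_{T_{ini}+1}+H_f u_f+G_f w_f+v_f$, and use independence of the future noises from $z_{T_{ini}}$ to assemble the stated mean and covariance. Your $G_f^{\mathrm{in}}$ is the paper's $H_f$ and your $[\Gamma_u(P)\;\Gamma_y(P)]$ is the paper's $\hat\theta_{T_{ini}+1}^-(P)$; the additional remarks on joint Gaussianity and the exactness of the KF posterior are consistent with what the paper implicitly assumes.
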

\end{rese}
\res{The proof of this proposition is provided in Appendix~\ref{append:kf_distribution}.}

The following theorem establishes the convergence of the conditional expectation coefficient \res{$\Theta_f^{(N)}$} and conditional covariance \res{$\Sigma_f^{(N)}$} from Proposition~\ref{prop:model_distribution_single} to their counterparts
$\eta_f(\Sigma), \mathcal Y_f(\Sigma)$ derived in Proposition~\ref{prop:kf_distribution} as $N\to\infty$.

\begin{theorem}[Asymptotic convergence of expectation and covariance of $\bar y_f$]\label{thm:convergence_N_multiple}
   Under Assumption\res{~\ref{assump:persistently_exciting}-\ref{assump:multiple_trajectories}}, \res{let $\epsilon>0$ be arbitrary. Then, as $N\to\infty$,} the coefficients in the conditional expectation and the conditional covariance of the generated \res{$\check y_{f}$} in~\eqref{eq:generated_distribution} satisfy:
    \begin{align}
        &\lim_{N\to\infty}\frac{\|\res{\Theta_f^{(N)}}-\eta_{f}(\Sigma)\|}{N^{-0.5+\epsilon}}=0\ \textrm{a.s.},\label{eq:theta_asymptotic_N_multiple} \\
        &\lim_{N\to\infty}\frac{\|\res{\Sigma_f^{(N)}}-\mathcal{Y}_f(\Sigma)\|}{N^{-0.5+\epsilon}}=0\ \textrm{a.s.},\label{eq:offline_asymptotic_N}
    \end{align}
    where $\Sigma$ is defined as follows:
    \begin{itemize}
        \item For the multi-trajectory library \res{$\mathcal D^{(N)}$},
        \begin{align}
            \Sigma=\check\Sigma-\check\Sigma_{x\phi}\Sigma_\phi^{\dagger}\check\Sigma_{x\phi}^\top,\label{eq:Sigma_def_multiple}
        \end{align}
        where $\Sigma_\phi$ is defined in Assumption~\ref{assump:input_exciting} and $\check\Sigma, \check\Sigma_{x\phi}$ are introduced in Assumption~\ref{assump:multiple_trajectories}.
        \item For the single-trajectory library \res{$\mathcal D^{(N)}$},
        \begin{align}
            \Sigma=\tilde\Sigma_{xx}-\tilde\Sigma_{x\phi}\res{\tilde\Sigma_{\phi\phi}^{\dagger}}\tilde\Sigma_{x\phi}^\top,\label{eq:Sigma_def_single}
        \end{align}
        where $\tilde\Sigma_{xx}=\mathbb{\bar E}_t(\tilde x_t\tilde x_t^\top), \tilde\Sigma_{x\phi}=\mathbb{\bar E}_t(\tilde x_t\tilde\phi_t^\top),\tilde\Sigma_{\phi\phi}=\mathbb{\bar E}_t(\tilde\phi_t\tilde\phi_t^\top)$.
    \end{itemize}
\end{theorem}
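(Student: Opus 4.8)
The plan is to reduce the statement to a strong law of large numbers (SLLN) with rate for empirical second-moment matrices, followed by an algebraic identification of the limits with the Kalman-filter quantities of Proposition~\ref{prop:kf_distribution}. First I would put the estimators in a form amenable to limiting arguments. Since $\Xi^{(N)}$ has full row rank almost surely for $N$ large (Remark~\ref{rem:xi_full_rank}), write $\Xi^{(N)\dagger}=\Xi^{(N)\top}(\Xi^{(N)}\Xi^{(N)\top})^{-1}$ and introduce the empirical moments $\hat S_{\Xi\Xi}^{(N)}=\tfrac1N\Xi^{(N)}\Xi^{(N)\top}$, $\hat S_{Y\Xi}^{(N)}=\tfrac1N Y_f^{(N)}\Xi^{(N)\top}$, $\hat S_{YY}^{(N)}=\tfrac1N Y_f^{(N)}Y_f^{(N)\top}$. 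A direct computation (in which the stray factors of $N$ cancel) gives $\Theta_f^{(N)}=\hat S_{Y\Xi}^{(N)}(\hat S_{\Xi\Xi}^{(N)})^{-1}\mathrm{col}(0,I)$, and, using that $I-\Xi^{(N)\dagger}\Xi^{(N)}$ is an orthogonal projector, $\Sigma_f^{(N)}=\hat S_{YY}^{(N)}-\hat S_{Y\Xi}^{(N)}(\hat S_{\Xi\Xi}^{(N)})^{-1}\hat S_{\Xi Y}^{(N)}$, i.e.\ a sample Schur complement. Thus everything is a fixed smooth function of the three empirical moment matrices away from the singular locus.

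Next I would prove that these empirical moments converge almost surely at rate $o(N^{-1/2+\epsilon})$ to population moments $S_{\Xi\Xi},S_{Y\Xi},S_{YY}$. For the multi-trajectory library the columns of $(\Phi^{(N)},Z^{(N)},Y_f^{(N)})$ are i.i.d.\ and jointly Gaussian by Assumptions~\ref{assump:input_exciting} and~\ref{assump:multiple_trajectories}; all mixed moments are finite, so the Marcinkiewicz--Zygmund SLLN (or fourth-moment bounds plus Borel--Cantelli) yields the stated a.s.\ rate. For the single-trajectory library the columns are overlapping Hankel windows of one closed-loop trajectory; here I would invoke geometric ergodicity / exponential $\beta$-mixing of the linear--Gaussian closed loop of Assumption~\ref{assump:input_exciting} to obtain an ergodic theorem with the same a.s.\ rate, the limit being the stationary-regime moment, which is exactly why the extended expectation $\bar{\mathbb E}_t$ enters in~\eqref{eq:Sigma_def_single}; the width identity~\eqref{eq:width_assump} lets both cases share a common $N$. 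Since $S_{\Xi\Xi}\succ 0$ under the standing assumptions ($R_{\mathrm{ctrl}}\succ 0$, $\Sigma_\phi\succ 0$, minimality and the positivity conditions of Assumption~\ref{assump:multiple_trajectories}), continuity of matrix inversion and multiplication propagates the rate, giving $\Theta_f^{(N)}\to S_{Y\Xi}S_{\Xi\Xi}^{-1}\mathrm{col}(0,I)$ and $\Sigma_f^{(N)}\to S_{YY}-S_{Y\Xi}S_{\Xi\Xi}^{-1}S_{\Xi Y}$, both at rate $o(N^{-1/2+\epsilon})$ a.s.

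The last and most delicate step is to identify the two limits with $\eta_f(\Sigma)$ and $\mathcal Y_f(\Sigma)$. By Gaussianity, the limit of $\Theta_f^{(N)}$ is the $z_{T_{ini}}$-block of the joint linear-regression coefficient of $y_f$ on $(\phi_1,z_{T_{ini}})$, and the limit of $\Sigma_f^{(N)}$ is $\mathrm{Cov}(y_f\mid\phi_1,z_{T_{ini}})$. The key observation is that conditioning a closed-loop library trajectory on $\phi_1$ decouples the offline controller: given $(\phi_1,u_{ini},y_{ini})$ one recovers the whole controller-state path $\phi_1,\dots,\phi_{T_{ini}}$, so knowing $u_{ini}$ only reveals the exogenous noises $\nu_t$ and adds nothing about the plant state; the future inputs $u_f$ are already part of $z_{T_{ini}}$; and the future plant noises $w_t,v_t$ ($t>T_{ini}$) are independent of the conditioning set. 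Hence $y_f\mid(\phi_1,z_{T_{ini}})$ is Gaussian, and the relevant plant state $x_{T_{ini}+1}$ is produced by a \emph{standard} Kalman recursion whose prior is obtained from $\mathcal N(0,\check\Sigma)$ (resp.\ $\mathcal N(0,\tilde\Sigma_{xx})$) by conditioning on $\phi_1$, namely $\mathcal N(\check\Sigma_{x\phi}\Sigma_\phi^\dagger\phi_1,\Sigma)$ with $\Sigma$ exactly as in~\eqref{eq:Sigma_def_multiple}--\eqref{eq:Sigma_def_single}. Propagating this prior through the filter gives $\mathbb E(x_{T_{ini}+1}\mid\phi_1,u_{ini},y_{ini})=\hat\psi_{T_{ini}+1}^-(\Sigma)\,\check\Sigma_{x\phi}\Sigma_\phi^\dagger\phi_1+(\text{affine in }u_{ini},y_{ini})$ and $\mathrm{Cov}(x_{T_{ini}+1}\mid\phi_1,u_{ini},y_{ini})=P_{T_{ini}+1}^-(\Sigma)$, whence $\mathbb E(y_f\mid\phi_1,z_{T_{ini}})=\Gamma\phi_1+\eta_f(\Sigma)z_{T_{ini}}$ for some matrix $\Gamma$ and $\mathrm{Cov}(y_f\mid\phi_1,z_{T_{ini}})=O_f P_{T_{ini}+1}^-(\Sigma)O_f^\top+G_fQ_fG_f^\top+R_f=\mathcal Y_f(\Sigma)$, matching Proposition~\ref{prop:kf_distribution}. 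Extracting the $z_{T_{ini}}$-block via $\mathrm{col}(0,I)$ kills the $\Gamma\phi_1$ term and leaves $\eta_f(\Sigma)$; combining with the previous paragraph yields~\eqref{eq:theta_asymptotic_N_multiple}--\eqref{eq:offline_asymptotic_N}.

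I expect the main obstacle to be twofold. First, making the ergodic/mixing argument for the single-trajectory Hankel data fully rigorous with an $o(N^{-1/2+\epsilon})$ almost-sure rate, and correctly pinning down that the time-averaged limits are the stationary-regime moments $\bar{\mathbb E}_t(\tilde x_t\tilde x_t^\top)$, $\bar{\mathbb E}_t(\tilde x_t\tilde\phi_t^\top)$, $\bar{\mathbb E}_t(\tilde\phi_t\tilde\phi_t^\top)$ (the non-stationary transient being negligible by geometric mixing). Second, the careful bookkeeping in the identification step: verifying that conditioning on $\phi_1$ \emph{cleanly} removes the closed-loop feedback -- in particular that, given $(\phi_1,y_{ini})$, the random inputs $u_{ini}$ act as purely exogenous signals in the Kalman recursion -- so that the $z_{T_{ini}}$-coefficient of the limiting regression is exactly $\eta_f(\Sigma)$ rather than a perturbation of it, and that the effective prior covariance is precisely the Schur complement $\Sigma$ in~\eqref{eq:Sigma_def_multiple}--\eqref{eq:Sigma_def_single}.
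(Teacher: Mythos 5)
Your proposal is correct and follows the same overall strategy as the paper's proof: write $\Theta_f^{(N)}$ and $\Sigma_f^{(N)}$ as a sample regression coefficient and a sample Schur complement of empirical second-moment matrices, establish an almost-sure $o(N^{-1/2+\epsilon})$ rate for those moments (i.i.d.\ SLLN/LIL for the multi-trajectory library; a mixing-type argument for the overlapping Hankel windows, which the paper implements by embedding the windowed vector $\tilde\vartheta_t$ as the output of an augmented stable LTI system and citing a sample-covariance rate lemma from the reference \cite{liu2020online}), and then propagate the rate through a map that is differentiable at the population moment matrix. Where you genuinely diverge is the identification step. You invoke joint Gaussianity to read the limits off as $\mathbb E(y_f\mid\phi_1,z_{T_{ini}})$ and $\mathrm{Cov}(y_f\mid\phi_1,z_{T_{ini}})$, and then argue that conditioning on $\phi_1$ decouples the offline feedback so that these conditionals are produced by a Kalman filter with prior covariance equal to the Schur complement $\Sigma$. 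The paper instead never conditions: it works with the $\phi$-residualized population regressions $\tilde\theta_l,\tilde P_l$ and proves by induction (Lemmas~\ref{lemma:kf_prediction_recursion} and~\ref{lemma:kf_measurement_recursion}) that they satisfy exactly the KF time- and measurement-update recursions, the crux being the identity $\Sigma_{xu}-\Sigma_{xz}\Sigma_{zz}^{-1}\Sigma_{zu}=0$, which holds because $\tilde u^\perp=\vartheta_2\tilde z^\perp+\tilde\nu$ and the estimation error is orthogonal to both terms. This is precisely the ``clean decoupling'' you flag as the delicate point, so your instinct about where the work lies is right. The trade-off: your Gaussian-conditioning argument is shorter and more conceptual, but the paper's purely second-moment induction is what allows the identical proof to carry over verbatim to the non-Gaussian setting of Proposition~\ref{cor:kf_nonGaussian}; your version would need to be rephrased in terms of linear MMSE projections to recover that extension. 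The only remaining (acknowledged) gap on your side is making the single-trajectory rate rigorous, which the paper discharges by the augmented-system construction rather than by a $\beta$-mixing ergodic theorem.
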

The proof of this theorem is provided in Appendix~\ref{append:convergence_N}. \res{The proof uses a martingale convergence result to establish almost-sure convergence of 
the sample covariance matrices, followed by a differentiability argument that 
preserves convergence rates.}

Theorem~\ref{thm:convergence_N_multiple} not only establishes the convergence of the conditional distribution of the generated $\bar y_f$, but also demonstrates its asymptotic equivalence to \res{the posterior distribution of $y_f$ generated by KF in~\eqref{eq:conditional_distribution}}, when the KF is initialized with $x_1 \sim \mathcal N(0, \Sigma)$. Notice that the true KF would use all past information, including $u_{-\infty:0},\,y_{-\infty:0}$ to infer the true posterior distribution of $x_1$. On the other hand, due to the finite length of the trajectories in the \res{trajectory} library, our CGM can only leverage information collected starting from time $1$, which create a mismatch between our CGM and the \res{true KF}. However, we shall show in the next subsection, that this discrepancy decays exponentially with respect to $T_{ini}$ and hence is negligible in practice.

\subsection{Optimality Gap with KF}\label{subsec:optimality_gap}
In this subsection, we quantify the discrepancy between the asymptotic distribution of $\bar{y}_f$ and the distribution predicted by the correctly initialized KF. 

We begin by deriving the posterior distribution of $y_f$ from the true KF. Recall from Section~\ref{sec:problem_formulation} that the behavioral model relies solely on the most recent $T_{ini}$ I/O samples, while the trajectory may extend beyond this finite window. In contrast, we assume that the KF has access to the entire historical trajectory, $u_{-\infty:T_{ini}}, y_{-\infty:T_{ini}}$, along with the initial state distribution $\mathcal N(\mu_{\mathcal T}, \Sigma_{\mathcal T})$. \res{Given the linear system~\eqref{eq:linear_system} and the initial state, the KF recursion is~\cite{anderson2005optimal}:
\begin{align}
    &\hat x_{t}=\hat x_t^-+K_t(y_t-C\hat x_t^-),\quad P_t=(I-K_tC)P_t^-,\nonumber \\
    &\hat x_{t+1}^-=A\hat x_t+Bu_t,\quad P_{t+1}^-=AP_tA^\top+Q,\label{eq:kf}
\end{align}
with $K_t=P_t^-C^\top(CP_t^-C^\top+R)^{-1}$}.

\begin{proposition}[Predicted Distribution of KF]\label{prop:optimal_distribution} 
    Under Assumptions~\ref{assump:multiple_trajectories}~\ref{assump:current_traj_initial_state}), \res{consider the KF~\eqref{eq:kf} initialized with 
    $(\mu_{\mathcal{T}}, \Sigma_{\mathcal{T}})$
    and updated recursively using the past samples 
    $u_{-\infty:0}, y_{-\infty:0}$. Let $\hat x_1^{-}$ and $P_1^{-}$ denote the estimate of~\eqref{eq:kf} at time $t=1$.
    Conditioned on $(\mathcal F_{T_{ini}}, u_f)$,  
    the distribution of $y_f$ generated by the system~\eqref{eq:linear_system} 
    is Gaussian with mean}
    \begin{align} 
        &\hat{y}_f^* = \eta_f(P_1^-)z_{T_{ini}}+O_f\hat\psi_{T_{ini}+1}^-(P_1^-)\hat x_1^-, \nonumber 
    \end{align} 
    \res{and covariance}
    \begin{align}
        &\mathcal{Y}_f^* = O_f P_{T_{ini}+1}^{-}(P_1^-) O_f^\top + G_f Q_f G_f^\top + R_f. \label{eq:optimal_distribution} 
    \end{align}
\end{proposition}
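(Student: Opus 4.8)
The plan is to derive Proposition~\ref{prop:optimal_distribution} as a corollary of Proposition~\ref{prop:kf_distribution} by exploiting the Markov structure of the linear--Gaussian model~\eqref{eq:linear_system}: everything the infinite past $u_{-\infty:0},y_{-\infty:0}$ conveys about the future is summarized by the conditional law of $x_1$, and that law is exactly the Gaussian $\mathcal{N}(\hat x_1^-,P_1^-)$ computed by the KF recursion~\eqref{eq:kf}. Thus the full-information prediction problem collapses to prediction for a system that starts at $t=1$ from a Gaussian initial state, which is precisely the situation covered by Proposition~\ref{prop:kf_distribution}.

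First I would note that, by Assumption~\ref{assump:multiple_trajectories}~\ref{assump:current_traj_initial_state}) the initial state of the current trajectory is Gaussian, so the recursion~\eqref{eq:kf} keeps every conditional law Gaussian and, by standard Kalman filtering theory~\cite{anderson2005optimal}, the conditional law of $x_1$ given $\mathcal{F}_0 := \sigma(u_t,y_t\mid t\le0)$ is $\mathcal{N}(\hat x_1^-,P_1^-)$, where $\hat x_1^-,P_1^-$ are produced by the prediction step of~\eqref{eq:kf} at time $1$. Next I would use the Markov property of~\eqref{eq:linear_system}: conditioned on $x_1$ and on the input sequence, the segment $(w_t,v_t,x_t,y_t)_{t\ge1}$ is independent of $\mathcal{F}_0$; since $(y_{ini},y_f)$ is a measurable function of $x_1$, the inputs, and $\{w_t,v_t\}_{t\ge1}$, the conditional law of $(y_{ini},y_f)$ given $(\mathcal{F}_0,u_{ini},u_f)$ coincides, for each realization of $\mathcal{F}_0$, with its law under system~\eqref{eq:linear_system} run from $t=1$ with $x_1\sim\mathcal{N}(\hat x_1^-,P_1^-)$. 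Conditioning this identity further on the observed $y_{ini}$, and noting $\mathcal{F}_{T_{ini}}=\sigma(\mathcal{F}_0,u_{ini},y_{ini})$, gives
\[
   \mathcal{L}\big(y_f\mid\mathcal{F}_{T_{ini}},u_f\big)
   = \mathcal{L}_{\,x_1\sim\mathcal{N}(\hat x_1^-,P_1^-)}\big(y_f\mid z_{T_{ini}}\big).
\]
Applying Proposition~\ref{prop:kf_distribution} to the right-hand side with $(\mu,P)=(\hat x_1^-,P_1^-)$ then yields exactly the claimed mean $\eta_f(P_1^-)z_{T_{ini}}+O_f\hat\psi_{T_{ini}+1}^-(P_1^-)\hat x_1^-$ and covariance $O_f P_{T_{ini}+1}^-(P_1^-)O_f^\top+G_fQ_fG_f^\top+R_f$.

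I expect the only genuine obstacle to be bookkeeping around the infinite horizon rather than any substantive argument. If the current trajectory starts at a finite time $\mathcal{T}$, then $\hat x_1^-$ and $P_1^-$ are obtained by finitely many iterations of~\eqref{eq:kf} from $(\mu_{\mathcal{T}},\Sigma_{\mathcal{T}})$ and there is nothing further to check. If instead one takes $\mathcal{T}\to-\infty$, then $\hat x_1^-,P_1^-$ must be read as the limit of these iterations; this limit exists and is independent of $\Sigma_{\mathcal{T}}$ because minimality of $(A,B,C)$ (together with $R\succ0$) places the system in the regime where the predictor Riccati recursion converges to its unique stabilizing fixed point, so $P_1^-$ is that fixed point and $\hat x_1^-$ the corresponding steady-state predictor. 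A secondary minor point is the existence of the regular conditional distributions manipulated above, which is automatic here since all laws involved are Gaussian (hence supported on Polish spaces) and the conditioning $\sigma$-algebras are countably generated.
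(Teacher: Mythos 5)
Your proposal is correct and follows essentially the same route as the paper, which justifies Proposition~\ref{prop:optimal_distribution} by the same two-phase argument (summarize the infinite past into the Gaussian posterior $\mathcal N(\hat x_1^-,P_1^-)$ of $x_1$, then invoke the finite-horizon prediction of Proposition~\ref{prop:kf_distribution} with $(\mu,P)=(\hat x_1^-,P_1^-)$). Your write-up merely makes explicit the Markov/tower-property step and the infinite-horizon limit that the paper leaves implicit.
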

\res{Similar to Proposition~\ref{prop:kf_distribution}, the forms in this proposition follow directly from the linear structure of the recursive equations of KF.}

The distribution given by the KF in Proposition~\ref{prop:optimal_distribution} can be interpreted in two phases:
\begin{enumerate}
    \item From $t = -\infty$ to $0$: Estimate the distribution $(\hat{x}_1^-, P_1^-)$ of $x_1$ using the input-output samples $u_{-\infty:0}, y_{-\infty:0}$ and the initial state distribution $(\mu_{\mathcal{T}}, \Sigma_{\mathcal{T}})$;
    \item From $t = 1$ to $T_{ini}$: First estimate $x_{T_{ini}+1}$ using $u_{ini}, y_{ini}$ as in~\eqref{eq:kf}, and then predict $y_f$ using $u_f$ and the estimated $x_{T_{ini}+1}$ as described in~\eqref{eq:conditional_distribution}.
\end{enumerate}
Notably, the predictions in~\eqref{eq:optimal_distribution} coincide with the asymptotic distribution of the CGM in Theorem~\ref{thm:convergence_N_multiple}, with the only difference being initializations on the distribution of $x_1$: $(0, \Sigma)$ for the asymptotic distribution of the CGM and $(\hat{x}_1^-, P_1^-)$ for the KF.

The following theorem establishes that %quantifies the effect of this different initialization on
the gap between the asymptotic distribution and the distribution given by the KF decreases exponentially as the length of the initial trajectory $T_{ini}\to\infty$. The proof of the theorem is provided in \res{Appendix~\ref{append:offline_asymptotic_Tini}}.
\begin{theorem}[Gap with the optimal generative model]\label{thm:offline_asymptotic_Tini}
    \res{Under Assumptions~\ref{assump:persistently_exciting}--\ref{assump:multiple_trajectories}, and suppose $(A, Q^{1/2})$ is stabilizable.
    Let $\Sigma$ and $P_1^-$ be any positive definite matrices.
    As the initial horizon length $T_{ini}$ tends to infinity, 
    the conditional mean and covariance of the generated samples $\check y_f$ 
    converge exponentially to those of the optimal generative model 
    described in Proposition~\ref{prop:optimal_distribution}.  
    More precisely, there exists a constant $0 < \rho < 1$ such that}
    \begin{align}
        &\lim_{T_{ini}\to\infty}\|\eta_{f}(\Sigma)-\eta_f(P_1^-)\|\rho^{-T_{ini}}=0, \nonumber \\
        &\lim_{T_{ini}\to\infty}\|O_f\hat\psi_{T_{ini}+1}^-(P_1^-)\|\rho^{-T_{ini}}=0\nonumber \\
        &\lim_{T_{ini}\to\infty}\|\mathcal{Y}_f(\Sigma)-\mathcal Y_f(P_1^-)\|\rho^{-T_{ini}}=0.
    \end{align}
\end{theorem}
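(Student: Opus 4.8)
The plan is to reduce all three limits to the exponential convergence of the Kalman filter's Riccati recursion. The key observation from Proposition~\ref{prop:kf_distribution} and Proposition~\ref{prop:optimal_distribution} is that the asymptotic CGM distribution and the true KF distribution differ only in the initialization of the state covariance at time $t=1$: the CGM effectively uses $P_1^- = \Sigma$ while the true KF uses $P_1^- = P_1^{-}$ (the genuine filtered covariance obtained by propagating from $t=-\infty$). Since $(A, Q^{1/2})$ is stabilizable and $(A,C)$ is observable (by the minimality assumption on $(A,B,C)$), the discrete-time Riccati recursion $P_{t+1}^- = A P_t^- A^\top + Q - A P_t^- C^\top (C P_t^- C^\top + R)^{-1} C P_t^- A^\top$ has a unique stabilizing fixed point, and — crucially — the recursion is a contraction in the sense that two solutions started from different positive-definite initial conditions converge to each other exponentially. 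I would invoke a standard result (e.g. from \cite{anderson2005optimal}) stating that there exists $0<\rho_1<1$ with $\|P_{T_{ini}+1}^-(\Sigma) - P_{T_{ini}+1}^-(P_1^-)\| = O(\rho_1^{2T_{ini}})$; the rate is governed by the spectral radius of the closed-loop matrix $A - A K_\infty C$ where $K_\infty$ is the steady-state gain.

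Next I would handle each of the three quantities in turn. For the covariance gap $\|\mathcal Y_f(\Sigma) - \mathcal Y_f(P_1^-)\|$: from \eqref{eq:conditional_distribution}, $\mathcal Y_f(P) = O_f P_{T_{ini}+1}^-(P) O_f^\top + G_f Q_f G_f^\top + R_f$, and since $G_f Q_f G_f^\top + R_f$ does not depend on $P$, we get $\|\mathcal Y_f(\Sigma)-\mathcal Y_f(P_1^-)\| \le \|O_f\|^2 \|P_{T_{ini}+1}^-(\Sigma) - P_{T_{ini}+1}^-(P_1^-)\| = O(\rho_1^{2T_{ini}})$, so any $\rho$ with $\rho_1^2 < \rho < 1$ works. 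For the mean-coefficient gap $\|\eta_f(\Sigma) - \eta_f(P_1^-)\|$: the operator $\eta_f(P)$ encodes the map from $z_{T_{ini}}$ to the predicted mean, which factors through the filter gains $K_1,\dots,K_{T_{ini}}$ evaluated along the trajectory started at $P_1^- = P$. Since each gain $K_t = P_t^- C^\top(C P_t^- C^\top + R)^{-1}$ is a smooth (rational) function of $P_t^-$ with bounded derivatives on the relevant compact set of positive-definite matrices, the exponential convergence of $P_t^-(\Sigma)$ to $P_t^-(P_1^-)$ propagates to exponential convergence of the gains and hence of $\eta_f$; here one must be slightly careful that the constants in the products of $(I-K_tC)A$-type factors stay bounded, which they do because both gain sequences converge to the same stabilizing steady state and the closed-loop products are uniformly exponentially stable. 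For the term $\|O_f \hat\psi_{T_{ini}+1}^-(P_1^-)\|$: the operator $\hat\psi_{T_{ini}+1}^-(P_1^-)$ maps the (fixed) initial mean $\hat x_1^-$ through $T_{ini}$ steps of the closed-loop filter dynamics $\hat x_{t+1}^- = (A - A K_t C)\hat x_t^- + (\text{terms in }u,y)$; isolating the homogeneous part acting on $\hat x_1^-$, its norm is bounded by the product $\prod_{t=1}^{T_{ini}} \|(I - A K_t C)\| \le c\,\rho_2^{T_{ini}}$ for some $0<\rho_2<1$, again by uniform exponential stability of the (near-steady-state) closed-loop filter. Taking $\rho = \max\{\rho_1^2, \rho_2, \cdot\}$ slightly enlarged gives a single $\rho<1$ valid for all three limits.

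The main obstacle is making the differentiability-and-propagation argument for $\eta_f$ rigorous: $\eta_f(P)$ is a composition of $O(T_{ini})$ Riccati-step maps, and a naive Lipschitz bound would accumulate a factor growing with $T_{ini}$ that could swamp the $\rho^{T_{ini}}$ decay. The fix is to not bound step-by-step Lipschitz constants but to exploit the structure: write $\eta_f(P)$ explicitly in terms of the filtered error covariances and the closed-loop transition matrices $\Phi_{t}^{cl}(P) = (I-A K_t(P) C)$, observe that $\sup_t \|\Phi_t^{cl}(\Sigma)\cdot \Phi_{t-1}^{cl}(\Sigma)\cdots\Phi_s^{cl}(\Sigma)\| \le c\,\rho^{t-s}$ uniformly (a consequence of exponential stability of the limiting closed-loop matrix plus exponential convergence of $K_t$ to $K_\infty$, via a standard perturbed-product/roughness lemma), and then expand the difference $\eta_f(\Sigma)-\eta_f(P_1^-)$ telescopically so that each term carries one factor of $\|P_t^-(\Sigma)-P_t^-(P_1^-)\| = O(\rho_1^{2t})$ sandwiched between two uniformly exponentially decaying products; summing the geometric series in $t$ still yields $O(\rho^{T_{ini}})$ after a harmless adjustment of the base. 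This telescoping-with-uniform-bounds technique is the crux; once it is set up, the other two limits follow by the same mechanism (or are strictly simpler). I would state the needed perturbed-product lemma as an auxiliary fact and relegate its routine proof to the same appendix.
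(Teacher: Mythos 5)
Your proposal is correct and shares the paper's central reduction: both arguments observe that the asymptotic CGM distribution and the optimal KF distribution are outputs of the same Kalman recursion~\eqref{eq:kf} run on the same data, differing only in the initialization of $(\hat x_1^-,P_1^-)$, so that all three limits follow from exponential forgetting of initial conditions in the filter. The covariance limit is handled identically in both (standard Riccati convergence to $P^*$ at rate governed by $\rho(A-AK^*C)$, sandwiched by $O_f$). Where you diverge is in establishing the mean-related limits: the paper's Lemma~\ref{lemma:kf_initial_state} works at the level of the state-estimate \emph{trajectories}, setting $\chi_t=\hat x_{t,11}^--\hat x_{t,22}^-$ and running a common-Lyapunov-function argument with $V(\chi)=\chi^\top P^*\chi$ to get almost-sure exponential decay of $\chi_t$, after first showing $\|K_{t,1}-K_{t,2}\|$ decays exponentially. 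You instead work directly on the coefficient \emph{operators} $\eta_f(\cdot)$ and $\hat\psi_{T_{ini}+1}^-(\cdot)$, invoking a perturbed-product (roughness) lemma for the time-varying closed-loop factors $A-AK_tC$ and a telescoping expansion summed as a geometric series. The two routes encode the same underlying fact (uniform exponential stability of the near-steady-state closed-loop filter), but yours has the advantage of proving the deterministic operator-norm statements in the theorem directly, whereas the paper's trajectory-level argument yields a.s.\ convergence of the estimates and leaves the passage to the coefficient matrices implicit; the paper's Lyapunov route, on the other hand, avoids having to control products of non-contractive individual factors explicitly. One small slip to fix: your closed-loop transition matrix should be $A-AK_tC$, not $I-AK_tC$, and the bound must be on the norm of the product rather than the product of norms (which you do correct in your final paragraph via the uniform bound $\|\Phi_t^{cl}\cdots\Phi_s^{cl}\|\le c\,\rho^{t-s}$).
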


\section{Extension to Systems with Non-Gaussian Noise}\label{sec:nonGaussian}
The convergence analysis in Section~\ref{sec:performance_analysis} is specifically developed for systems with Gaussian noise, where the Bayesian optimal predictor reduces to the closed-form KF that our CGM asymptotically recovers. For systems with non-Gaussian noise, \res{the Bayesian optimal estimator becomes nonlinear and typically lacks closed-form expressions}. \res{Although nonlinear Bayesian estimators can in principle approximate such posteriors, commonly used methods such as particle filters ofen suffer from computational complexity, particle degeneracy, and the curse of dimensionality~\cite{Elfring2021ParticleFA}.} Consequently, linear filtering approaches remain widely adopted in practice for their robustness and efficiency, even in non-Gaussian settings. \res{Motivated by this}, we next establish \res{the optimality of our} CGM among all linear predictors for systems with non-Gaussian noise. We first introduce the following assumptions:
\begin{assumption}\label{assump:nonGaussian}
    \begin{enumerate}
        \item \res{The random variables $\{w_t\}$ and $\{v_t\}$ form i.i.d. sequences with zero mean and 
        finite second moments.
        Moreover, the joint distribution of $(\phi_1,\nu_0,v_0)$ has support with nonempty interior.}
        \item For the multi-trajectory behavior library \res{$\mathcal{\check D}^{(N)}$}, the initial state $\check x_1^{(i)}$ of each trajectory is independently sampled from the same distribution with zero mean and covariance $\check\Sigma\succ 0$. Moreover, we assume that $\check\Sigma-\check\Sigma_{x\phi}\Sigma_\phi^\dagger\check\Sigma_{x\phi}^\top\succ 0$, where $\check\Sigma_{x\phi}=\mathbb E(\check x_1^{(1)}\check\phi_1^{(1)\top})$ and $\Sigma_\phi$ is introduced in Assumption~\ref{assump:input_exciting}.
    \end{enumerate}
\end{assumption}
\vspace{-\baselineskip}%
\vspace{-\baselineskip}%
\begin{rese}
\begin{remark}
    Assumption~\ref{assump:nonGaussian} ensures that $\Xi^{(N)}$ has full row rank  almost surely and that the equality constraint in~\eqref{eq:alpha_equation} in our algorithm is therefore feasible.
\end{remark}
\end{rese}

Notably, under Assumption~\ref{assump:nonGaussian}, the KF serves as the optimal estimator among all linear estimators in the minimum mean square error sense, even for systems with non-Gaussian noise:
\begin{proposition}[\cite{kf_mmse_optimal}]\label{prop:nonGaussian_KF}
     \res{Consider the system~\eqref{eq:linear_system} under Assumption~\ref{assump:nonGaussian}. Let the KF recursion~\eqref{eq:kf} be initialized with $(\mu, \Sigma)$ and applied over $(u_{ini}, y_{ini}, u_f)$.  
    Then, the KF produces the MMSE
    prediction of $y_f$ among all linear estimators with mean
    \[
        \hat y_f
        = \eta_f(\Sigma)\, z_{T_{ini}}
        + O_f\, \hat\psi_{T_{ini}+1}^-(\Sigma)\, \mu,
    \]
    and covariance
    \[
        \hat{\mathcal Y}_f(\Sigma)
        = O_f P_{T_{ini}+1}^-(\Sigma) O_f^\top 
        + G_f Q_f G_f^\top 
        + R_f.
    \]}
    The definition of $\eta_f, \hat\psi_{T_{ini}+1}^-, P_{T_{ini}+1}^-, O_f, G_f, Q_f, R_f$ is the same as that in Proposition~\ref{prop:kf_distribution}.
\end{proposition}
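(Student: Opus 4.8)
The plan is to reduce Proposition~\ref{prop:nonGaussian_KF} to the classical fact that the Kalman filter implements the linear minimum-mean-square-error (LMMSE) estimator, a property that rests only on the second-order structure of the signals and therefore survives the loss of Gaussianity. Concretely, I would work in the Hilbert space $L^2$ of zero-mean, finite-variance random vectors equipped with the inner product $\langle X,Y\rangle=\mathbb{E}(X^\top Y)$, in which the best \emph{affine} estimator of a quantity given a collection of observations is the orthogonal projection onto the affine subspace they span. Since Assumption~\ref{assump:nonGaussian} guarantees only zero mean and finite second moments of $\{w_t\}$, $\{v_t\}$ and of the initial state, this projection is well-defined, and the entire argument proceeds purely through covariances; the support condition in Assumption~\ref{assump:nonGaussian} is not used here, as it is needed only for the almost-sure full-rank of $\Xi^{(N)}$.

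The key steps, in order, are: (i) center all signals around the prior mean $\mu$ of $x_1$, so affine estimation reduces to linear estimation of the centered variables; (ii) run the innovations (Gram--Schmidt) construction on $y_1,\dots,y_{T_{ini}}$, using $R\succ 0$ and minimality of $(A,B,C)$ to ensure each innovation has positive-definite covariance, so the projection coefficients---the Kalman gains $K_t$ in~\eqref{eq:kf}---are well-defined; (iii) verify by induction that the recursion~\eqref{eq:kf} propagates exactly the LMMSE estimate $\hat x_{T_{ini}+1}$ of $x_{T_{ini}+1}$ and its error covariance $P_{T_{ini}+1}^-(\Sigma)$, since at each step the measurement update is precisely the projection onto the span of the newest innovation; (iv) express the future block as a linear map $y_f = O_f x_{T_{ini}+1} + (\text{affine in } u_f) + G_f \bar w_f + \bar v_f$ via the extended observability matrix $O_f$ and the Toeplitz matrices $G_f,Q_f,R_f$ determined by $(A,B,C,Q,R)$; (v) since the future noises are uncorrelated with the past observations and with the estimation error of $x_{T_{ini}+1}$, the LMMSE estimate of $y_f$ is obtained by substituting $\hat x_{T_{ini}+1}$ for $x_{T_{ini}+1}$ and zero for the noises, and the error covariance splits additively into $O_f P_{T_{ini}+1}^-(\Sigma) O_f^\top$ plus $G_f Q_f G_f^\top + R_f$, exactly as in~\eqref{eq:conditional_distribution}; (vi) finally collect the linear dependence on $z_{T_{ini}}=\mathrm{col}(u_{ini},y_{ini},u_f)$ into $\eta_f(\Sigma)$ and the residual dependence on the prior mean into $O_f \hat\psi_{T_{ini}+1}^-(\Sigma)\mu$, which are the same operators appearing in Proposition~\ref{prop:kf_distribution}. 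The bookkeeping is identical to the Gaussian case; only the interpretation changes from ``conditional expectation'' to ``orthogonal projection onto affine functions of the data.''

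The main obstacle is step (iii): showing rigorously that the Kalman recursion coincides with the LMMSE projector without appealing to Gaussian conditionals. This is exactly the innovations-approach argument and requires care in identifying $\mathrm{span}\{y_1,\dots,y_t\}$ with the span of the orthogonalized innovations and in checking that the KF update is the projection onto the incremental subspace; once this is established, the remaining steps are routine linear algebra. A secondary point is the affine (nonzero-mean) nature of the estimator, which is handled by the centering in step (i). Since this result is classical, the write-up would primarily cite~\cite{kf_mmse_optimal} and indicate that the explicit formulas are obtained by reusing the computation behind Proposition~\ref{prop:kf_distribution} with ``$\mathbb{E}(\cdot\mid\cdot)$'' replaced by the LMMSE projection.
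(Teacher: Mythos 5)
Your proposal is correct, and it is the standard argument. Note that the paper does not actually prove this proposition: it is stated as a cited result from the reference \cite{kf_mmse_optimal}, with the explicit mean and covariance formulas inherited verbatim from the computation behind Proposition~\ref{prop:kf_distribution} (Appendix~A), which unrolls the dynamics over the horizon and exploits the uncorrelatedness of future noises. Your innovations/orthogonal-projection argument in $L^2$ is precisely the classical proof of the cited fact, and your steps (iv)--(vi) reproduce the same bookkeeping as the paper's Appendix~A, so the two routes coincide where they overlap. Two small remarks: in step (ii) the positive definiteness of each innovation covariance $CP_t^-C^\top+R$ follows from $R\succ 0$ alone, so minimality of $(A,B,C)$ is not needed there; and the LMMSE-optimality claim implicitly requires that the initialization $(\mu,\Sigma)$ equal the true first and second moments of $x_1$ (as your centering step assumes) --- with an arbitrary initialization the recursion still runs but is no longer the orthogonal projection.
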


The following proposition extends our convergence results in Theorem~\ref{thm:convergence_N_multiple} and Theorem~\ref{thm:offline_asymptotic_Tini} to systems with non-Gaussian noise:
\begin{proposition}[Extension to systems with non-Gaussian noise]\label{cor:kf_nonGaussian}
    Under Assumption~\ref{assump:persistently_exciting}--\ref{assump:nonGaussian}, \res{the following results hold:}
    \begin{enumerate}
        \item \res{The coefficients of the conditional mean and covariance of samples generated by CGM converge almost surely to their linear-MMSE counterparts}:
        \begin{align}
            &\lim_{N\to\infty}\frac{\|\res{\Theta_f^{(N)}}-\eta_{f}(\Sigma)\|}{N^{-0.5+\epsilon}}=0\ \textrm{a.s.}, \\
            &\lim_{N\to\infty}\frac{\|\res{\Sigma_f^{(N)}}-\mathcal{Y}_f(\Sigma)\|}{N^{-0.5+\epsilon}}=0\ \textrm{a.s.},
        \end{align}
        where $\Sigma$ shares the same expression as in Theorem~\ref{thm:convergence_N_multiple}.
        \item \res{The exponential convergence with respect to $T_{ini}$ established in
        Theorem~\ref{thm:offline_asymptotic_Tini} 
        remains valid in the non-Gaussian setting, with the same convergence rate.}
    \end{enumerate}
\end{proposition}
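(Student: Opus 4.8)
The plan is to observe that, apart from identifying limiting objects with Kalman-filter quantities, neither the proof of Theorem~\ref{thm:convergence_N_multiple} nor that of Theorem~\ref{thm:offline_asymptotic_Tini} uses Gaussianity. The first rests on (i) a strong law of large numbers for the fixed-size sample second-moment matrices $\tfrac{1}{N}\Xi^{(N)}\Xi^{(N)\top}$, $\tfrac{1}{N}Y_f^{(N)}\Xi^{(N)\top}$, $\tfrac{1}{N}Y_f^{(N)}Y_f^{(N)\top}$, together with the a.s.\ full row rank of $\Xi^{(N)}$; the second is purely deterministic, concerning the Riccati recursion~\eqref{eq:kf}. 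Under Assumption~\ref{assump:nonGaussian}, ingredient (i) survives: finite second moments of $w_t,v_t,\nu_t$ and of the initial states make every entry of those matrices an average of integrable random variables (products of two linear functionals of the noise), and stability of the closed loop~\eqref{eq:linear_system}--\eqref{eq:stabilizing_controller} makes the single-trajectory Hankel columns asymptotically stationary and ergodic, admitting the same martingale decomposition as in Appendix~\ref{append:convergence_N}. Hence the sample second-moment matrices still converge a.s.\ to the limits built from $\Sigma_\phi,\check\Sigma,\check\Sigma_{x\phi}$ in the multi-trajectory case and from $\bar{\mathbb E}_t(\tilde x_t\tilde x_t^\top),\bar{\mathbb E}_t(\tilde x_t\tilde\phi_t^\top),\bar{\mathbb E}_t(\tilde\phi_t\tilde\phi_t^\top)$ in the single-trajectory case. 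The Gaussian nondegeneracy used in Appendix~\ref{appendix:xi_full_rank} is replaced by the assumption that $(\phi_1,\nu_0,v_0)$ has support with nonempty interior, which still forces $\Xi^{(N)}$ to have full row rank a.s.

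For part~(1), I would write $\Theta_f^{(N)}$ and $\Sigma_f^{(N)}$ explicitly as functions of the above fixed-size matrices, using $\Xi^{(N)\dagger}=\Xi^{(N)\top}(\Xi^{(N)}\Xi^{(N)\top})^{-1}$ and the fact that $I-\Xi^{(N)\dagger}\Xi^{(N)}$ is an orthogonal projector, so that $\Theta_f^{(N)}=[\tfrac{1}{N}Y_f^{(N)}\Xi^{(N)\top}][\tfrac{1}{N}\Xi^{(N)}\Xi^{(N)\top}]^{-1}\mathrm{col}(0,I)$ and $\Sigma_f^{(N)}$ is the corresponding Schur complement of the joint sample second-moment matrix of $(Y_f^{(N)},\Xi^{(N)})$. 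These maps are real-analytic, hence locally Lipschitz, on the open set where $\tfrac{1}{N}\Xi^{(N)}\Xi^{(N)\top}\succ0$, so the differentiability argument already cited for Theorem~\ref{thm:convergence_N_multiple} transfers both the a.s.\ convergence and its rate to $\Theta_f^{(N)},\Sigma_f^{(N)}$. The genuinely new point is the identity of the limit: $\Theta_f^{(N)}z_{T_{ini}}$ is linear in $z_{T_{ini}}$ by construction, so its limit is the best linear predictor of $y_f$ from $(u_{ini},y_{ini},u_f)$ with the offline controller state partialled out, and $\Sigma_f^{(N)}$ tends to the associated linear-MMSE error covariance; both depend only on second moments and therefore equal $\eta_f(\Sigma)$ and $\mathcal Y_f(\Sigma)$, which is exactly the content of Proposition~\ref{prop:nonGaussian_KF}. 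This yields the two displayed limits at rate $N^{-0.5+\epsilon}$.

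Part~(2) then needs essentially no new argument: Theorem~\ref{thm:offline_asymptotic_Tini} and its proof in Appendix~\ref{append:offline_asymptotic_Tini} concern only the exponential forgetting of the initial covariance in the deterministic recursion~\eqref{eq:kf}, guaranteed by observability of $(C,A)$ and stabilizability of $(A,Q^{1/2})$, and are distribution-free. Since the asymptotic CGM mean and covariance are again $\eta_f(\Sigma)z_{T_{ini}}$ and $\mathcal Y_f(\Sigma)$ by part~(1), while the linear-MMSE predictor run on the full past history is described by the same operators evaluated at $(\hat x_1^-,P_1^-)$, the three exponential limits carry over verbatim with the same $\rho<1$.

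The step I expect to be the main obstacle is the quantitative part of~(1): obtaining the $N^{-0.5+\epsilon}$ rate almost surely from finite second moments only. The entries of the sample second-moment matrices are averages of products of two linear functionals of the noise, so they need not have finite variance; controlling their fluctuation at order $N^{-0.5+\epsilon}$ a.s.\ requires a Marcinkiewicz--Zygmund-type strong law (exponent just below $2$), or, if one assumes a moment of order slightly above $2$ on $w_t,v_t,\nu_t$, a law-of-the-iterated-logarithm bound --- in either case a refinement of the plain SLLN that only delivers convergence. In the single-trajectory case this refinement must be carried out for the martingale decomposition rather than for i.i.d.\ sums, but the structure is unchanged. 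Once that quantitative SLLN is available, the remainder is the routine transfer of rates through the smooth maps above and the reuse of the deterministic Riccati estimates from Appendix~\ref{append:offline_asymptotic_Tini}.
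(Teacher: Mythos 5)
Your proposal is essentially the paper's own argument: the paper disposes of this proposition in a single sentence at the end of Appendix~B, observing that the proofs of Theorem~\ref{thm:convergence_N_multiple} and Theorem~\ref{thm:offline_asymptotic_Tini} nowhere invoke Gaussianity, so they apply verbatim under Assumption~\ref{assump:nonGaussian} (with the full-row-rank of $\Xi^{(N)}$ re-established via the nonempty-interior support condition, exactly as you describe). The obstacle you flag at the end is real and is in fact glossed over by the paper itself: the rate $N^{-0.5+\epsilon}$ in the multi-trajectory case is justified there by the law of the iterated logarithm applied to entries of $\check\vartheta_t\check\vartheta_t^\top$, which requires those products to have finite variance, i.e.\ fourth moments of the noise, whereas Assumption~\ref{assump:nonGaussian} only guarantees finite second moments; under the stated hypotheses the plain SLLN delivers the almost-sure convergence but not the quantitative rate, so your Marcinkiewicz--Zygmund/extra-moment caveat identifies a genuine gap shared by both your write-up and the paper's.
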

The proof of the proposition is included in Appendix~\ref{append:convergence_N}. This result demonstrates that our framework achieves the best possible linear estimation with correct mean/variance propagation asymptotically, even in non-Gaussian settings.

\section{Direct Data-Driven Predictive Control via Conditional Generative Modeling}\label{sec:control}
This section presents data-driven predictive control approaches based on the proposed CGM. \res{We develop the control formulations for systems with Gaussian noise, which enables a 
clear and tractable analysis.} First, we introduce a data-driven ``pessimistic" controller inspired by the scenario-based stochastic MPC. Subsequently, we introduce an alternative interpretation of DeePC~\cite{coulson2019data}. Building on this interpretation, we formulate an ``optimistic" controller that incorporates the proposed CGM to effectively capture the stochastic characteristic of the system.

\subsection{\res{Predictive Controller Inspired by Scenario-based MPC}}
Scenario-based Stochastic MPC (SSMPC)~\cite{schildbach_scenario_2014} is a powerful framework for solving finite-horizon constrained control problems for stochastic systems when complete parametric models are available. Specifically, for the stochastic system~\eqref{eq:linear_system} with Gaussian noise characterized by state-space parameters $A, B, C$ and noise covariances $Q, R$, the SSMPC solves the following control problem~\cite{li_stochastic_2023} at time $T_{ini}$ over horizon $T$ by generating $M$ noise scenarios:
\begin{problem}[Scenario-based stochastic MPC]\label{prob:scenario_based_mpc}
\begin{align}
    &\min_{u_f, y_f^{(j)}, x_{t}^{(j)}}\ \sum_{j=1}^MJ\left(y_f^{(j)}, u_f\right),\nonumber \\
    \mathrm{s.t.} \quad  &x_{t+1}^{(j)}=A x_{t}^{(j)}+B u_{t}+w_{t}^{(j)}, y_{t}^{(j)}=C x_{t}^{(j)}+v_{t}^{(j)}, \label{eq:state_space_model_in_SMPC} \\
    & x_{T_{ini}+1}^{(j)}=\bar x_{T_{ini}+1}^{-(j)},\nonumber \\
    &y_t^{(j)}\in\mathcal Y, u_t\in\mathcal U, \nonumber \\
    &\forall t=T_{ini}+1, \cdots, T_{ini}+T, j=1, \cdots, M, \label{eq:scenario_based_mpc}
\end{align}
where $u_f=\mathrm{col}(u_{T_{ini}+1}, \cdots, u_{T_{ini}+T}), y_f^{(j)}=\mathrm{col}(y_{T_{ini}+1}^{(j)}, \cdots, y_{T_{ini}+T}^{(j)})$, and $J$ denotes the cost function of the control problem. $w_t^{(j)}, v_t^{(j)}$ and $\hat x_{T_{ini}+1}^{-(j)}$ are \res{samples drawn from} $\mathcal N(0, Q), \mathcal N(0, R)$ and $\mathcal N(\hat x_{T_{ini}+1}^-, P_{T_{ini}+1})$ respectively, \res{and are treated as fixed scenarios in~\eqref{eq:scenario_based_mpc}. Here,} $\hat x_{T_{ini}+1}^-, P_{T_{ini}+1}^-$ are \res{obtained from} the corresponding Kalman filter of system~\eqref{eq:linear_system}. $\mathcal Y$ and $\mathcal U$ are the feasible sets of outputs and inputs, respectively.
\end{problem}

Notice that SSMPC requires full access to the state-space parameters $A, B, C$ and the noise covariance $Q, R$ of the system~\eqref{eq:linear_system}. In contrast, \res{we construct a \emph{direct data-driven} predictive controller by replacing the model-based scenario generation in~\eqref{eq:scenario_based_mpc} with samples generated by our CGM.
} Specifically, given \res{dataset $\mathcal{D}^{(N)}$ collected offline}, and $T_{ini}$ historical sample pairs $u_{ini}, y_{ini}$ of the current trajectory, we formulate the following optimization problem:
\begin{problem}[SSMPC inspired Direct Data-Driven Predictive Control]\label{prob:mpc_with_generative_model}
    \vspace{-0.1cm}
\begin{align}
    &\min_{u_f, y_f^{(j)}, \alpha^{(j)}}\ \sum_{j=1}^MJ\left(y_f^{(j)}, u_f\right),\nonumber \\
    \mathrm{s.t.}  \ &\alpha^{(j)}=(\arg\min_{g}\|g\|_2^2\mid \res{\Xi^{(N)}}g=\mathrm{col}(0, z_{T_{ini}}))\nonumber \\
    &+(\beta^{(j)}\mid \res{\Xi^{(N)}}\beta^{(j)}=0),y_f^{(j)}=\res{Y_f^{(N)}}\alpha^{(j)},\nonumber \\
    & z_{T_{ini}}=\mathrm{col}(u_{ini}, y_{ini}, u_f),\nonumber \\
    & y_{t}^{(j)}\in{\mathcal Y}, u_{t}\in{\mathcal U},\nonumber \\
    &t=T_{ini}+1, \cdots, T_{ini}+T, j=1, \cdots, M, \label{eq:nonparametric_mpc}
\end{align}
\res{where each $\beta^{(j)}$ is a sample from $\mathcal N(0, \frac{1}{N}I_N)$ and is treated as fixed in the optimization problem.}
\end{problem}

This problem is formulated in a behavioral manner, as it only involves the \res{trajectory} library \res{$\mathcal D^{(N)}$} and the initial trajectory $u_{ini}, y_{ini}$. Similar to SSMPC, this formulation is convex and can be efficiently solved using standard optimization tools, as long as the control cost \( J \) is convex and \( \mathcal{U}, \mathcal{Y} \) are convex sets. However, in contrast to SSMPC methods, which require the application of a Kalman filter and an optimization problem, both of which depend on precise system parameters, our approach is purely behavioral, avoiding an additional step for explicit system identification.

\subsection{Discussions: Relationship with DeePC}
This subsection provides an alternative interpretation of DeePC~\cite{coulson2019data} through the lens of the proposed CGM. DeePC is a recently developed direct data-driven control method that leverages the Willems' lemma~\cite{fundamental_lemma} to predict system behavior and formulate the following control problem:
\begin{problem}[DeePC for noisy systems]\label{prob:noisy_deepc}
\begin{align}
    \min_{u_f, y_f, g, \sigma_y, \sigma} &J\left(y_f, u_f\right)+\res{\gamma_g h_g(g)+\gamma_yh_y(\sigma_y)}, \nonumber\\
   \mathrm{ s.t. }\ &\res{Z^{(N)}}g=(z_{T_{ini}}+\sigma),\quad y_f=\res{Y_f^{(N)}}g, \label{eq:fundamental_lemma_deepc}\\
    &\sigma=\mathrm{col}(0, \sigma_y, 0),\quad z_{T_{ini}}=\mathrm{col}(u_{ini}, y_{ini}, u_f),\nonumber \\
    &y_t\in\mathcal Y,u_{t} \in \mathcal{U}, \quad \forall t=T_{ini}+1, \ldots, T_{ini}+T,\label{eq:deepc_noisy}
\end{align}
where $\gamma_g, \gamma_y$ are weighting parameters \res{$h_g, h_y$ denote penalty functions on $g$ and $\sigma_y$, respectively}.
\end{problem}

\begin{rese}
\vspace{-\baselineskip}%
\begin{remark}
    Various regularizers have been proposed in DeePC with different rationales: $\ell_1$-norms for distributional robustness~\cite{coulson2019data,coulson_distributionally_2021}, $\ell_2$-norms for computational simplicity and ridge-type interpretation~\cite{MATTSSON2023625}, and projection-based terms for consistency with system identification~\cite{dorfler_bridging_2023}.
\end{remark}
\end{rese}

Notice that in this problem,~\eqref{eq:fundamental_lemma_deepc} represents a behavior model for the noisy system\res{~\eqref{eq:linear_system}}, where \( g \) and \( \sigma_y \) are related to the noise contained in the \res{trajectory library $\mathcal D$} and the collected past outputs \( y_{ini} \), respectively. Based on this model, Problem~\ref{prob:noisy_deepc} seeks to jointly minimize two competing objectives: the control cost and the deviation of the optimization variables $y_f$ from the future outputs \res{determined by $\mathcal D^{(N)}$ and the past data of the current trajectory}.

Two observations regarding this formulation can be made: First, Problem~\ref{prob:noisy_deepc} adopts an optimistic stance toward noise, as the noise is treated as an optimization variable to minimize the control objective, instead of conforming to its correct distribution (stochastic formulation)~\cite{schildbach_scenario_2014} or serving as an adversarial element against minimizing the control objective (robust formulation)~\cite{4789462}. Secondly, for the Gaussian system~\eqref{eq:linear_system}, the distribution of the predicted $y_f$ by the model in~\eqref{eq:fundamental_lemma_deepc} may not effectively characterize the distribution of $y_f$ of the true system.

To alleviate the \res{model-mismatch} issue in the second observation, we replace the behavior model in Problem~\ref{prob:noisy_deepc} with our CGM \res{introduced in Section~\ref{sec:method}}.
Following the same logic as in Problem~\ref{prob:noisy_deepc}, \res{we treat the noise component $\beta$ in our CGM as an optimization variable, rather than a random variable.} Our objective function also jointly minimizes the control cost and the deviation of the optimization variable $y_f$ from the future outputs predicted by the CGM. The latter goal is achieved by maximizing the log-likelihood $\mathcal L(y_f\!\mid\! z_{T_{ini}}; \res{\mathcal{D}^{(N)}})$ of observing the generated $y_f$ given the initial trajectory $z_{T_{ini}}$. The resulting control problem is then formulated as:

\begin{align}
    \min_{u_f, y_f, \alpha, \beta}\ &J(u_f, y_f)-\gamma \mathcal L(y_f\mid z_{T_{ini}}; \res{\mathcal{D}^{(N)}}),\nonumber \\
    \mathrm{s.t.}\ 
    &\alpha=(\arg\min_{g}\|g\|_2^2\mid \res{\Xi^{(N)}}g=\mathrm{col}(0, z_{T_{ini}}))\nonumber \\
    &\qquad\qquad\qquad+(\beta\mid \res{\Xi^{(N)}}\beta=0),\nonumber \\
    & y_f=\res{Y_f^{(N)}}\alpha,\quad z_{T_{ini}}=\mathrm{col}(u_{ini}, y_{ini}, u_f),\nonumber \\
    & y_t\in\mathcal Y, u_{t} \in \mathcal{U}, \forall t=T_{ini}+1, \ldots, T_{ini}+T.\label{eq:generative_model_deepc}
\end{align}
\res{Similar to Problem~\ref{prob:noisy_deepc}, the hyperparameter $\gamma$ balances the control cost and the model consistency.} 

The following proposition derives an equivalent expression of $\mathcal L$ \res{for the Gaussian system~\eqref{eq:linear_system}}:
\begin{proposition}\label{prop:constraint_equivalent}
    The log-likelihood function $\mathcal L(y_f\mid z_{T_{ini}}; \mathcal{D}^N)$ has the following explicit expansion:
    \begin{align}
        \mathcal L(y_f\mid z_{T_{ini}}; \res{\mathcal{D}^{(N)}})&=-\frac{N}{2}\|\res{Y_f^{(N)}}(I-\res{\Xi^{(N)\dagger}\Xi^{(N)}})\beta\|_{\Gamma^{-1/2}}^2\nonumber \\
        &-\frac{Tp}{2}\log(2\pi)-\frac{1}{2}\log\det(\frac{1}{N}\Gamma),\label{eq:log_likelihood}
    \end{align}
    where $\Gamma=\res{Y_f^{(N)}(I-\Xi^{(N)\dagger}\Xi^{(N)})(I-\Xi^{(N)\dagger}\Xi^{(N)})^\top Y_f^{(N)\top}}$ and $\|x\|_{\Gamma^{-1/2}}=\sqrt{x^\top\Gamma^{-1} x}$.
\end{proposition}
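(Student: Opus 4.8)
The plan is to evaluate the multivariate Gaussian log-density at $y_f$ using the conditional mean and covariance supplied by Proposition~\ref{prop:model_distribution_single}. By that proposition, conditioned on $z_{T_{ini}}$ and $\mathcal{D}^{(N)}$, the generated $\bar y_f$ is Gaussian with mean $\Theta_f^{(N)} z_{T_{ini}}$ and covariance $\Sigma_f^{(N)} = \frac{1}{N}\Gamma$, where $\Gamma = Y_f^{(N)}(I-\Xi^{(N)\dagger}\Xi^{(N)})(I-\Xi^{(N)\dagger}\Xi^{(N)})^\top Y_f^{(N)\top}$ is precisely the matrix named in the statement. Substituting the ambient dimension $Tp$, the mean $\mu = \Theta_f^{(N)}z_{T_{ini}}$, and the covariance $\Sigma = \frac{1}{N}\Gamma$ into the log-density $-\frac{d}{2}\log(2\pi)-\frac12\log\det\Sigma-\frac12(y_f-\mu)^\top\Sigma^{-1}(y_f-\mu)$ immediately produces the constant term $-\frac{Tp}{2}\log(2\pi)$ and the normalization term $-\frac12\log\det(\frac1N\Gamma)$ appearing in~\eqref{eq:log_likelihood}.

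It then remains only to rewrite the quadratic term. Writing $\Sigma^{-1}=(\frac1N\Gamma)^{-1}=N\Gamma^{-1}$ pulls the factor $N$ out front, so the quadratic term equals $-\frac{N}{2}\|y_f-\Theta_f^{(N)}z_{T_{ini}}\|_{\Gamma^{-1/2}}^2$. To bring this into the stated form, I would use the CGM parametrization active on the feasible set of~\eqref{eq:generative_model_deepc}: there $y_f=Y_f^{(N)}\alpha$ with $\alpha=\Xi^{(N)\dagger}\mathrm{col}(0,z_{T_{ini}})+\beta$ and $\Xi^{(N)}\beta=0$. Combining this with the identity $\Theta_f^{(N)}z_{T_{ini}}=Y_f^{(N)}\Xi^{(N)\dagger}\mathrm{col}(0,z_{T_{ini}})$ recorded in Remark~\ref{rem:interpretation} gives $y_f-\Theta_f^{(N)}z_{T_{ini}}=Y_f^{(N)}\beta$; and because $\Xi^{(N)}\beta=0$ forces $\beta$ into $\ker\Xi^{(N)}$, on which the orthogonal projector $I-\Xi^{(N)\dagger}\Xi^{(N)}$ acts as the identity, this equals $Y_f^{(N)}(I-\Xi^{(N)\dagger}\Xi^{(N)})\beta$. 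Substituting into the quadratic term reproduces exactly the first line of~\eqref{eq:log_likelihood}.

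The one delicate point, which I would treat most carefully, is that $\Sigma_f^{(N)}=\frac1N\Gamma$ need not be strictly positive definite, so $\Gamma^{-1}$ and $\log\det(\frac1N\Gamma)$ must be interpreted on the support of the distribution (Moore--Penrose pseudo-inverse and pseudo-determinant). This causes no trouble here: the residual $y_f-\Theta_f^{(N)}z_{T_{ini}}=Y_f^{(N)}(I-\Xi^{(N)\dagger}\Xi^{(N)})\beta$ always lies in $\mathrm{range}(\Gamma)=\mathrm{range}\big(Y_f^{(N)}(I-\Xi^{(N)\dagger}\Xi^{(N)})\big)$, so the quadratic form is well-defined on the entire feasible set. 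Alternatively, invoking the full-row-rank property of $\Xi^{(N)}$ from Remark~\ref{rem:xi_full_rank} together with a dimension count ensuring that $Y_f^{(N)}$ maps $\ker\Xi^{(N)}$ onto all of $\mathbb{R}^{Tp}$ for $N$ large enough yields $\Gamma\succ0$ almost surely, in which case the classical Gaussian density formula applies verbatim. Apart from this bookkeeping, the proof is essentially a one-line substitution and involves no genuine estimate.
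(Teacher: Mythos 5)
Your proof is correct and takes the same route as the paper, which simply cites the standard multivariate Gaussian log-likelihood applied to the conditional distribution from Proposition~\ref{prop:model_distribution_single}. Your extra steps --- rewriting the residual $y_f-\Theta_f^{(N)}z_{T_{ini}}$ as $Y_f^{(N)}(I-\Xi^{(N)\dagger}\Xi^{(N)})\beta$ via the constraint $\Xi^{(N)}\beta=0$, and flagging the possible singularity of $\Gamma$ --- are sound and in fact supply details the paper leaves implicit.
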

\res{This proposition follows from the standard log-likelihood function for multivariate Gaussian distributions~\cite{bishop2006pattern}.} Using Proposition~\ref{prop:constraint_equivalent}, we can derive another stochastic variant of DeePC in~\eqref{eq:fundamental_lemma_deepc}:
\begin{problem}[Variant of DeePC based on our CGM]\label{prob:ours_optimistic}
\begin{align}
    &\min_{u_f, y_f, \alpha, \beta} J\left(y_f, u_f\right)+\tilde\gamma\|\res{Y_f^{(N)}}(I-\res{\Xi^{(N)\dagger}\Xi^{(N)}})\beta\|_{\Gamma^{-1/2}}^2, \nonumber\\
    \text { s.t. }\ &\alpha=(\arg\min_{g}\|g\|_2^2\mid \res{\Xi^{(N)}}g=z_{T_{ini}})\nonumber \\
    &\qquad\qquad\qquad\qquad\qquad+(\beta\mid \res{\Xi^{(N)}}\beta=0),\nonumber \\
    &y_f=\res{Y_f^{(N)}}\alpha, \quad z_{T_{ini}}=\mathrm{col}(u_{ini}, y_{ini}, u_f),\nonumber \\
    &y_t\in\mathcal Y,u_{t} \in \mathcal{U} \quad \forall t=T_{ini}+1, \ldots, T_{ini}+T.\label{eq:optimistic_ours}
\end{align}
\end{problem}

Similar to Problem~\ref{prob:noisy_deepc}, maximizing the likelihood function also leads to a regularization term, which better aligns with the stochastic properties of the system.

Finally, we note that by setting the random part $\beta$ in our CGM of Problem~\ref{prob:ours_optimistic} to zero, the controller reduces to a data-driven deterministic controller. Moreover, if \res{$\Phi^{(N)}=0$} in addition to $\beta=0$, the resulting controller coincides with the Subspace Predictive Controller (SPC)~\cite{FAVOREEL19994004}. Hence, this establishes SPC as a data-driven version of the deterministic MPC, where only the expectation of the predicted outputs are considered. Simulation results in Section~\ref{sec:simulations} compares its performance with both deterministic MPC and DeePC.

\section{Simulations}\label{sec:simulations}
This section presents simulations to assess the effectiveness of the proposed model. We focus on the trajectory tracking problem of a 4-dimensional single-input, single-output three-pulley system, originally introduced in~\cite{hjalmarsson1995model} and subsequently studied in~\cite{LANDAU199577, CAMPI20021337, MARKOVSKY202142, dorfler_bridging_2023}. The system parameters are consistent with those in~\cite{CAMPI20021337}. Both process and observation noise are modeled as Gaussian with zero mean and covariances of \( Q = 0.01I \) and \( R = 0.04I \), respectively.

\subsection{Convergence Speed Verification}
We perform experiments to evaluate the convergence speed in Theorem~\ref{thm:convergence_N_multiple}. For the CGM, we set the length of initial trajectory to $T_{ini} = 8$ and the prediction horizon to $T = 10$. Each experiment is repeated independently for $100$ times, and the results are presented in Fig.~\ref{fig:convergence}. The figures show that the convergence speed is approximately $O(N^{-0.5})$ in both cases, supporting the validity of Theorem~\ref{thm:convergence_N_multiple}.

\begin{figure}[!htbp]
    \centering
    \begin{subfigure}[t]{0.22\textwidth}
        \centering
        \includegraphics[width=\textwidth]{./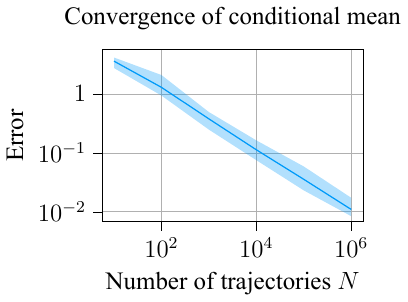}
        \caption{Convergence of the coefficients of the conditional mean considered in Theorem~\ref{thm:convergence_N_multiple} when the behavior library $\mathcal D$ consists of multiple independent trajectories.}
    \end{subfigure}
    \begin{subfigure}[t]{0.23\textwidth}
        \centering
        \includegraphics[width=\textwidth]{./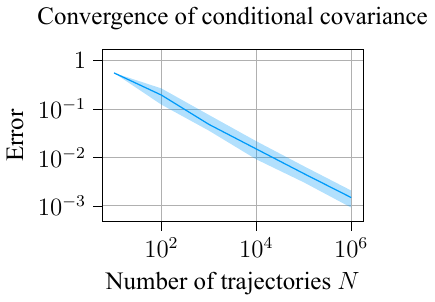}
        \caption{Convergence of the conditional covariance considered in Theorem~\ref{thm:convergence_N_multiple} when the behavior library $\mathcal D$ consists of multiple independent trajectories.}
    \end{subfigure}
    \begin{subfigure}[t]{0.23\textwidth}
        \centering
        \includegraphics[width=\textwidth]{./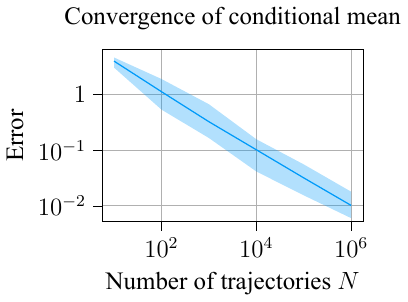}
        \caption{Convergence of coefficients of the conditional mean considered in Theorem~\ref{thm:convergence_N_multiple} when the behavior library $\mathcal D$ is a single trajectory.}
    \end{subfigure}
    \begin{subfigure}[t]{0.23\textwidth}
        \centering
        \includegraphics[width=\textwidth]{./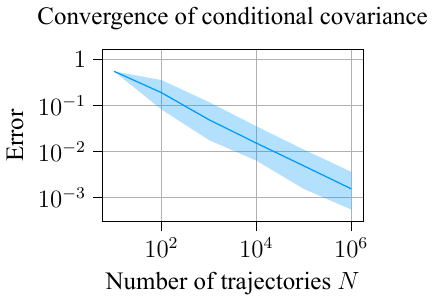}
        \caption{Convergence of the conditional covariance considered in Theorem~\ref{thm:convergence_N_multiple} when the behavior library $\mathcal D$ is a single trajectory.}
    \end{subfigure}
    \caption{Convergence speed verification of the proposed generative model as the number of trajectories $N$ (or equivalently, the length of the single trajectory) increases. The figures depict a log-log ribbon plot, with the solid line indicating the mean value and the shaded region representing the range of values.}
    \label{fig:convergence}
    \vspace{-0.6cm}
\end{figure}

\subsection{Control Performance Comparison}
We next compare the control performance of the direct data-driven predictive controllers proposed in Section~\ref{sec:control}.

\res{\textbf{Controller Implementations:}} We implement three groups of controllers:
\begin{itemize}
    \item \textbf{Deterministic controllers}: We implement several deterministic control approaches for comparison. First, we reproduce a classic model-based controller combining Kalman filtering with deterministic MPC (KF+DMPC), where the state-space parameters are set to the true system parameters. We also implement the Subspace Predictive Controller (SPC)~\cite{FAVOREEL19994004}, which can be viewed as a deterministic variant of our proposed optimistic controller in Problem~\ref{prob:ours_optimistic}. Additionally, we include a traditional data-driven control approach that first identifies state-space parameters using the Ho-Kalman algorithm, then applies Kalman filtering and deterministic MPC (Ho-Kalman+KF+DMPC)~\cite{oymak_revisiting_2022}.
    \item \textbf{``Optimistic'' controllers}: We implement DeePC, as described in Problem~\ref{prob:noisy_deepc}, \res{$\gamma$-DDPC~\cite{breschi2023data}} and our variant of DeePC from Problem~\ref{prob:ours_optimistic} with $\tilde\gamma=1000$. To enhance computational efficiency and improve the performance of DeePC, following~\cite{dorfler_bridging_2023, MATTSSON2023625}, we \res{specify} the regularization term in Problem~\ref{prob:noisy_deepc} \res{as}:
    \[
    \gamma_y\|\sigma_y\|_2^2 + \gamma_Z\|(I - \res{Z^{(N)\dagger} Z^{(N)}})g\|_2^2,
    \]
    where we set $\gamma_y = 100$ and $\gamma_Z = 10^6$.
    \item \textbf{Scenario-based stochastic controllers}: We implement a classic model-based controller, specifically a KF combined with an SSMPC (KF+SSMPC), as described in Problem~\ref{prob:scenario_based_mpc}, where the state-space parameters and noise covariances are set to the true system parameters. \res{We also implement an oracle controller (Oracle SSMPC with perfect $x_t$) that assumes perfect knowledge of both system parameters and current states $x_t$ at each time step.} Additionally, we construct our proposed scenario-based predictive controller equipped with our generative model, as described in Problem~\ref{prob:mpc_with_generative_model}, for comparison. This controller is denoted as Scenario-based Stochastic PC with generative model (SSPC$_{gen}$). In all controllers, we use $M = 50$ scenarios to account for system uncertainty.
\end{itemize}
\res{\textbf{Experimental Setup:} All experiments start from the zero initial state. }
\res{Following~\cite{dinkla_closed-loop_2024}, the controllers apply zero inputs for the first $T_{ini}=8$ time steps to collect the initial trajectory $(u_{ini}, y_{ini})$. 
After this initialization period, each controller operates in closed loop for $100$ time steps.} 
The control objective is similar to~\cite{coulson2019data}, defined as:
\begin{align}
    J(y_f, u_f)=\sum_{t=T_{ini}+1}^{T_{ini}+T}\left(\|y_t-10\|_2^2+0.01\|u_t\|_2^2\right).\label{eq:control_cost}
\end{align}
The constraints are $y_t\leq 12, t=T_{ini}+1, \cdots, T_{ini}+T$. \res{The horizon for all controllers is set to $T = 10$. For the data-driven controllers, we generate single long trajectories with i.i.d. Gaussian inputs, with trajectory lengths $K = 117$ and $K = 1017$, setting $T_{ini} = 8$ and $T = 10$. These trajectory lengths allow us to construct Hankel matrices with $N = 100$ and $N = 1000$ columns respectively, following the relationship $N = K - T_{ini} - T + 1$, to formulate the \res{trajectory} library $\mathcal{D}^{(N)}$. We perform $100$ independent experiments, reporting the average control cost across experiments with no constraint violations, along with the probability of constraint violation, defined as
\[
p_{\text{fail}} = \frac{N_{\text{fail}}}{N_{\text{trial}}},
\]
where $N_{\text{fail}}$ denotes the number of experiments with constraint violations, and $N_{\text{trial}}$ denotes the total number of experiments.  Constraint violations are monitored and recorded throughout the entire closed-loop operation.} The results are presented in Table~\ref{tab:simulations}.

\begin{table*}[!htbp]
    \centering
    \res{
    \begin{tabular}{l|ccc}
        \hline
        Controller & Average Cost & $p_{fail}$ & Online computation time per step (ms) \\
        \hline
        KF+Deterministic MPC (KF+DMPC) & 436.94 & 0.19   & 2.30 \\
        SPC ($N=100$) & 443.12 & 0.20 & 2.62 \\
        SPC ($N=1000$) & 438.25 & 0.20 & 2.58\\
        Ho-Kalman+KF+Deterministic MPC (Ho-Kalman+KF+DMPC, $N=100$) & 1087.97 & 0.56 & 2.30 \\
        Ho-Kalman+KF+Deterministic MPC (Ho-Kalman+KF+DMPC, $N=1000$) & 466.99 & 0.23 & 2.30 \\
        \hline
        DeePC ($N=100$) & 450.60 & 0.18 & 7.62 \\
        DeePC ($N=1000$) & 438.50 & 0.14 & 792.07 \\
        $\gamma$-DDPC ($N=100$) & 443.02 & 0.19 & 1.13 \\
		$\gamma$-DDPC ($N=1000$) & 438.25 & 0.20 & 1.92 \\
        Our variant of DeePC ($N=100, \tilde\gamma=1000$,  DeePC$_{var, \tilde\gamma}$) & 443.06 & 0.20 & 10.43 \\
        Our variant of DeePC ($N=1000$, $\tilde\gamma=1000$, DeePC$_{var, \tilde\gamma}$) & 438.41 & 0.14 & 865.17 \\
        \hline
        Oracle SSMPC with perfect $x_t$ & 432.26 & 0.01 & 9.69 \\
        KF+SSMPC (KF+SSMPC) & 439.88 & 0.00 & 10.99 \\
        Scenario-based stochastic PC with CGM ($N=100$, SSPC$_{gen}$) & 440.73 & 0.05 & 13.33 \\
        Scenario-based stochastic PC with CGM ($N=1000$, SSPC$_{gen}$) & 437.32 & 0.01 & 12.67 \\
        \hline
    \end{tabular}}
    \caption{Average control cost, constraint violation probability $p_{fail}$ and online computation times for the deterministic controllers, ``optimistic'' controllers and scenario-based stochastic controllers, respectively, over $100$ independent experiments.}
    \label{tab:simulations}
    \vspace{-0.6cm}
\end{table*}

Simulation results show that while KF+DMPC achieves \res{a competitive} average control cost of \res{$436.94$}, it incurs a relatively high constraint violation probability of \res{$0.19$}. When the size of the \res{trajectory} library is rather limited ($N=100$), SPC and Ho-Kalman+KF+DMPC have \res{violation probabilities of $0.20$ and $0.56$, respectively}. \res{The particularly poor performance of Ho-Kalman+KF+DMPC with $N=100$ (average cost of $1087.97$ and violation probability of $0.56$) can be attributed to the sensitivity of the Ho-Kalman algorithm to noise in limited data~\cite{chiuso_ill-conditioning_2004,li2022fundamental}, leading to inaccurate identification of system parameters $A$, $B$, $C$ and consequently poor control performance.} However, \res{both algorithms} demonstrate significant improvement as the \res{trajectory} library size \( N \) increases, \res{with SPC reducing its control cost to $438.25$ and Ho-Kalman+KF+DMPC achieving substantial improvements in both cost ($466.99$) and violation probability ($0.23$)} at \( N = 1000 \). Additionally, DeePC, \res{$\gamma$-DDPC} and our DeePC$_{var, \tilde\gamma}$ exhibit suboptimal performance in terms of constraint satisfaction, although increasing the \res{trajectory} library size \( N \) enhances their reliability. These results show that an optimistic approach to noise in controller design can compromise robustness.

In contrast, KF+SSMPC reduces the violation probability to $0.00$, \res{while maintaining a competitive control cost of $439.88$}, demonstrating the importance of incorporating stochastic models for ensuring constraint satisfaction, particularly in safety-critical applications. Additionally, our proposed SSPC$_{gen}$ outperforms both the deterministic and optimistic controllers, \res{achieving violation probabilities of $0.05$ and $0.01$ for $N=100$ and $N=1000$ respectively, while maintaining control costs of $440.73$ and $437.32$}. As the size of the \res{trajectory} library $N$ increases, SSPC$_{gen}$ approaches the performance of SSMPC, which relies on true system parameters. This result further validates the effectiveness of the proposed generative model in accounting for system uncertainties and enhancing the robustness of predictive controllers against system noise.

Table~\ref{tab:simulations} \res{also} compares the online computation time per step for all controllers. The "optimistic" approaches (DeePC and DeePC$_{var, \tilde\gamma}$) require significantly longer computation times, especially with large \res{trajectory} libraries\res{, with computation times increasing from $7.62$ms to $792.07$ms for DeePC and from $10.43$ms to $865.17$ms for DeePC$_{var, \tilde\gamma}$ as $N$ increases from $100$ to $1000$}. In contrast, our SSPC$_{gen}$ and SPC, which are data-driven counterparts of SSMPC and KF+DMPC respectively, achieve comparable computational efficiency to their model-based versions\res{, with SSPC$_{gen}$ and SPC requiring only $13.33$ms and $2.62$ms for $N=100$ respectively}. Moreover, their computation times remain stable with increasing library size, demonstrating their strong scalability.

\begin{rese}
    At online deployment, we apply the proposed scenario-based stochastic predictive controller with the CGM (SSPC$_{\mathrm{gen}}$) on an unstable system. The objective, constraint set, and horizons are the same as in the preceding simulations; only the state-space matrices $(A,B,C)$ differ as follows:
    \[A = \begin{bmatrix} 1.10 & 0.20 \\ 0.00 & 0.98\end{bmatrix},\quad
        B = \begin{bmatrix} 0.50 \\ 0.20\end{bmatrix},\quad
        C = \begin{bmatrix} 1.00 & 0.00 \end{bmatrix}.\] 
    
    We also use $M=50$ scenarios per step. Performance metrics are kept the same as the preceding subsection, and the experimental results are reported in Table~\ref{tab:unstable_simulations}.

\begin{table*}[!htbp]
    \centering
    \begin{tabular}{l|ccc}
        \hline
        Controller & Average Cost & $p_{fail}$ & Online computation time per step (ms) \\
        \hline
        Scenario-based stochastic PC with CGM ($N=100$, SSPC$_{gen}$) & 122.16 & 0.00 & 11.87 \\
        Scenario-based stochastic PC with CGM ($N=1000$, SSPC$_{gen}$) & 118.69 & 0.00 & 12.21 \\
        \hline
    \end{tabular}
    \caption{Average control cost, constraint violation probability $p_{fail}$ and online computation times for scenario-based stochastic controllers, over $100$ independent experiments for an open-loop unstable system.}
    \label{tab:unstable_simulations}
    \vspace{-0.6cm}
\end{table*}

The simulation results demonstrate that the proposed CGM-based SSPC can effectively handle open-loop unstable systems. 
With data collected under the pre-stabilized output-feedback excitation, the controller achieves stable and constraint-satisfying performance, maintaining tracking accuracy comparable to the stable-plant case. 
These results confirm that the proposed method seamlessly extends to unstable dynamics while preserving strong control performance and robustness.
\end{rese}
 
\section{Conclusion}\label{sec:conclusions}
This paper proposes a behavioral CGM for stochastic LTI systems. The model generates random samples from the \res{data-driven approximation of the} conditional distribution of future outputs given past inputs and outputs, ensuring consistency with the system's \res{trajectory} library. It is purely behavioral, utilizing only the I/O samples of the system without explicit \res{system identification}. We analyze the model's performance by proving that the distribution of generated samples converges as the size of the \res{trajectory} library increases. Additionally, we prove that the gap between the asymptotic distribution as $N$ tends to infinity and the distribution of the optimal CGM decays exponentially with the length $T_{ini}$ of the initial trajectory. The proposed model is then incorporated into a predictive control framework, formulating a direct data-driven predictive control problem that accounts for the system's stochastic characteristics while bypassing the identification steps and \res{state estimations}. Numerical results validate the derived bounds and demonstrate the model's effectiveness in enhancing the robustness of predictive controllers under system noise.

% \addtolength{\textheight}{-12cm}   % This command serves to balance the column lengths
                                  % on the last page of the document manually. It shortens
                                  % the textheight of the last page by a suitable amount.
                                  % This command does not take effect until the next page
                                  % so it should come on the page before the last. Make
                                  % sure that you do not shorten the textheight too much.

%%%%%%%%%%%%%%%%%%%%%%%%%%%%%%%%%%%%%%%%%%%%%%%%%%%%%%%%%%%%%%%%%%%%%%%%%%%%%%%%

%%%%%%%%%%%%%%%%%%%%%%%%%%%%%%%%%%%%%%%%%%%%%%%%%%%%%%%%%%%%%%%%%%%%%%%%%%%%%%%%

%%%%%%%%%%%%%%%%%%%%%%%%%%%%%%%%%%%%%%%%%%%%%%%%%%%%%%%%%%%%%%%%%%%%%%%%%%%%%%%%
\appendices
\section{Proof of Proposition~\ref{prop:kf_distribution}}\label{append:kf_distribution}
\begin{proof}
   \res{Consider} the current recorded initial trajectory \(z_{T_{ini}}\) beginning at $t=1$ and evolving to the current time step $T_{ini}$. \res{The KF recursively computes state estimates according to~\eqref{eq:kf}, initialized with 
    \(\hat x_{1}^-=\mu\) and \(P_{1}^- = P\).} Then, the optimal estimation for $x_{T_{ini}+1}$ conditioned on $u_{ini}, y_{ini}$ is $\hat x_{T_{ini}+1}^-$ with covariance $P_{T_{ini}+1}^-$. 
    \begin{rese}

    From the Kalman filter recursion~\eqref{eq:kf}, we observe that $\hat x_{T_{ini}+1}^-$ depends linearly on the measurements $y_{ini}$, inputs $u_{ini}$, and the initial mean $\mu$. By the linearity of the Kalman filter, this relationship can be expressed as:
    \begin{align}
        \hat x_{T_{ini}+1}^-=\hat\theta_{T_{ini}+1}^-(P)\mathrm{col}(u_{ini}, y_{ini})+\hat\psi_{T_{ini}+1}^-(P)\mu,\label{eq:theta_psi_Tini_def}
    \end{align}
    where \(\hat\theta_{T_{ini}+1}^- : \mathbb{R}^{n \times n} \to \mathbb{R}^{n \times (T_{ini}(m+p))}\) and \(\hat\psi_{T_{ini}+1}^- : \mathbb{R}^{n \times n} \to \mathbb{R}^n\) are functions of the initial state covariance \(P\), whose explicit forms can be computed recursively from~\eqref{eq:kf} but are omitted for brevity. The expressions also depend on the state-space parameters \(A, B, C, Q, R\), which are omitted in the notation for simplicity. We similarly treat $P_{T_{ini}+1}^-: \mathbb R^{n\times n}\to \mathbb R^{n\times n}$ as a function of $P$, denoted as $P_{T_{ini}+1}^-(P)$.
    \end{rese}

    Then, based on the estimated distribution of the state $x_{T_{ini}+1}$, we can provide multi-step ahead predictions for future outputs $y_f$. Define $w_f = \mathrm{col}(w_{T_{ini}+1}, \dots, w_{T_{ini}+T}), v_f = \mathrm{col}(v_{T_{ini}+1}, \dots, v_{T_{ini}+T})$ and
    \begin{align}
        O_f=\begin{bmatrix} C^\top & (CA)^\top & \cdots & (CA^{T-1})^\top \end{bmatrix}^\top,\nonumber
    \end{align}
    \begin{align}
        G_f=\begin{bmatrix} 0 & 0 & \cdots  & 0 & 0 \\
        C & 0 & \cdots & 0 & 0 \\
        \vdots & \vdots & \ddots & \vdots & \vdots \\
        CA^{T-3} & CA^{T-4} & \cdots & 0 & 0 \\
        CA^{T-2} & CA^{T-3} & \cdots & C & 0 \end{bmatrix},\nonumber
    \end{align}
    \begin{align}
        H_f&=G_f(I_T\otimes B),Q_f=(I_T\otimes Q), R_f=(I_T\otimes R).\nonumber
    \end{align}
    \res{By unrolling the system dynamics~\eqref{eq:linear_system} over the prediction horizon $T$, the relationship between future outputs $y_f$, the state $x_{T_{ini}+1}$ and future inputs $u_f$ is given by:}
    \[ y_f = O_f x_{T_{ini}+1} + H_f u_f + G_f w_f + v_f.\]
    \res{Using this relationship, the mean of $y_f$ conditioned on $u_{ini}, y_{ini}, u_f$ is:}
    \begin{align}
        &\mathbb E(y_f\mid z_{T_{ini}}) = O_f \hat x_{T_{ini}+1}^- + H_f u_f\nonumber \\
        &=\underbrace{[O_f\hat\theta_{T_{ini}+1}^-(P)\ H_f]}_{\eta_f(P)}z_{T_{ini}}+O_f\hat\psi_{T_{ini}+1}^-(P)\mu,\nonumber
    \end{align}
    \res{Since $w_f, v_f$ are independent of $z_{T_{ini}}$, the conditional covariance is:}
    \begin{align}
        &\mathcal Y_f(P)\triangleq\mathrm{Cov}(y_f\mid z_{T_{ini}}) \nonumber \\
        &= O_f P_{T_{ini}+1}^-(P) O_f^\top + G_f Q_f G_f^\top + R_f.\nonumber \qedhere
    \end{align}
\end{proof}

\section{Proofs of Theorem~\ref{thm:convergence_N_multiple} and Corollary~\ref{cor:kf_nonGaussian}}\label{append:convergence_N}
Both the conditional mean and covariance of $\bar y_f$ depend fundamentally on the matrix
\begin{rese}
\begin{align}
    &\mathcal{W}^{(N)} \nonumber \\
    &= \frac{1}{N}\begin{bmatrix}
    \Phi^{(N)\top}\ Z^{(N)\top}\ Y_f^{(N)\top}\end{bmatrix}^\top\begin{bmatrix} \Phi^{(N)\top}\ Z^{(N)\top}\ Y_f^{(N)\top}\end{bmatrix}.\nonumber
\end{align}
\end{rese}
For convenience of notation, when \res{$\Phi^{(N)}, Z^{(N)}, Y_f^{(N)}$} carry tilde ($\tilde\ $) or check ($\check\ $), these modifiers extend to \res{$\mathcal W^{(N)}$} correspondingly. Based on this definition, we first analyze the convergence property of \res{$\mathcal{W}^{(N)}$} as $N$ tends to infinity, which subsequently leads to the convergence of the conditional mean and covariance in Theorem~\ref{thm:convergence_N_multiple}.

\subsection{Convergence of \res{$\mathcal{W}^{(N)}$}}
To derive the asymptotic distribution of \res{$\mathcal W^{(N)}$} for both behavior libraries \res{$\mathcal{\tilde{D}}^{(N)}$} and \res{$\mathcal{\check{D}}^{(N)}$}, we first introduce several additional notations. For the multi-trajectory library \res{$\mathcal{\check D}^{(N)}$} under Assumption~\ref{assump:multiple_trajectories}, the corresponding random vectors \( \{\check u_t^{(i)}, \check y_t^{(i)}, \check\phi_t^{(i)}\}_{t=1}^{T_{\text{ini}}+T} \) of the sampled trajectories are i.i.d. across different indices $i$. To formalize this statistical structure, we define a set of representative random vectors \( \{\check u_t, \check y_t, \check\phi_t\}_{t=1}^{T_{\text{ini}}+T} \) that preserve the identical joint distribution shared by each trajectory \( \{\check u_t^{(i)}, \check y_t^{(i)}, \check\phi_t^{(i)}\}_{t=1}^{T_{\text{ini}}+T} \). Define the composite random vector
\[\check\vartheta_t\triangleq \mathrm{col}(\check\phi_1, \check u_{1:T_{ini}}, \check y_{1:T_{ini}}, \check u_{T_{ini}+1:T_{ini}+T}, \check y_{T_{ini}+1:T_{ini}+T}),\]
with the corresponding covariance matrix:
\[\mathcal{\check W}=\mathbb E(\check\vartheta_t\check\vartheta_t^\top).\]
On the other hand, for the single-trajectory library \res{$\mathcal{\tilde D}^{(N)}$}, let $\tilde y_{t, f}$ and $\tilde z_{t, T_{ini}}$ denote the random vectors corresponding to the $t$-th column of \res{$\tilde Y_f^{(N)}$} and \res{$\tilde Z^{(N)}$}, respectively:
\begin{align}
    &\tilde y_{t, f}=\mathrm{col}(\tilde y_{t+T_{ini}}, \cdots, \tilde y_{t+T_{ini}+T-1}),\nonumber \\
    &\tilde z_{t, T_{ini}}=\mathrm{col}(\tilde u_{t:t+T_{ini}-1}, \tilde y_{t:t+T_{ini}-1}, \tilde u_{t+T_{ini}:t+T_{ini}+T-1}).\nonumber
\end{align}
Then, define the augmented vector 
\res{\[\tilde\vartheta_t\triangleq \begin{bmatrix} 
        \tilde\phi_{t}^\top &  \tilde z_{t, T_{ini}}^{\top} & \tilde y_{t, f}^{\top}
    \end{bmatrix}^\top\]} and the corresponding asymptotic covariance matrix
\begin{align}
    \mathcal{\tilde W}&=\mathbb{\bar E}_t(\tilde\vartheta_t\tilde\vartheta_t^\top)=\lim_{t\to\infty}\mathbb{E}(\tilde\vartheta_t\tilde\vartheta_t^\top).
\end{align}
We hereafter unify the \res{notation} of $\mathcal{\check W}$ and $\mathcal{\tilde W}$ as $\mathcal W$ for simplicity.

\begin{rese}
For simplicity of notation, we introduce the following convergence rate notation. For a sequence of random variables, vectors, or matrices $\{x_t\}_{t \geq 1}$, we write $x_t \sim \mathcal{C}(\beta)$ if for all $\epsilon > 0$,
$$\lim_{t\to\infty}\frac{\|x_t\|}{t^{\beta+\epsilon}}=0 \quad \text{almost surely},$$
where both $\|x_t\|/t^{\beta+\epsilon}$ and $0$ are random variables.
This notation indicates that $x_t$ converges to zero at a rate faster than any polynomial of order $\beta$. Note that $\mathcal{C}(\beta)$ denotes a class of stochastic processes with a certain convergence rate, not a probability distribution.

We first recall the following result from \cite{liu2020online}, which establishes convergence rates for sample covariances of temporally correlated data from stable linear systems.

\begin{lemma}[\cite{liu2020online}, Lemma 7]\label{lem:liu_convergence}
    Consider the autonomous LTI system~\eqref{eq:linear_system} with $B=0$ and stable matrix $A$ (i.e., all eigenvalues of $A$ lie strictly inside the unit circle). Then the sample covariance converges as
    \[\frac{1}{k}\sum_{t=1}^{k}y_ty_t^\top - \mathbb {\bar E}_t(y_ty_t^\top) \sim \mathcal{C}(-0.5).\]
\end{lemma}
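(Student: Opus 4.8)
The plan is to establish an almost-sure law of large numbers with the stated polynomial rate for the sample second moment $S_k\triangleq\frac1k\sum_{t=1}^k y_ty_t^\top$ by separating a deterministic transient from a centered random fluctuation. First I would record that $A$ being Schur-stable gives $\|A^j\|\le c\rho^j$ for some $\rho\in(0,1)$, so $\bar\Sigma_x\triangleq\sum_{j\ge0}A^jQ(A^\top)^j$ is finite and solves the Lyapunov equation $\bar\Sigma_x=A\bar\Sigma_xA^\top+Q$. Since $\mathbb E(x_{t+1}x_{t+1}^\top)=A\,\mathbb E(x_tx_t^\top)A^\top+Q$, the difference $\mathbb E(x_tx_t^\top)-\bar\Sigma_x=A^{t-1}\big(\mathbb E(x_1x_1^\top)-\bar\Sigma_x\big)(A^\top)^{t-1}$ decays geometrically, hence so does $\mathbb E(y_ty_t^\top)-\bar\Sigma_y$ where $\bar\Sigma_y\triangleq\bar{\mathbb E}_t(y_ty_t^\top)=C\bar\Sigma_xC^\top+R$. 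Consequently $\frac1k\sum_{t=1}^k\big(\mathbb E(y_ty_t^\top)-\bar\Sigma_y\big)=O(k^{-1})$, a $\mathcal C(-1)\subset\mathcal C(-0.5)$ term, and it remains only to show that the centered random sum $M_k\triangleq\sum_{t=1}^k\big(y_ty_t^\top-\mathbb E(y_ty_t^\top)\big)$ satisfies $\|M_k\|=o(k^{1/2+\epsilon})$ a.s.\ for every $\epsilon>0$.

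The obstacle is that $\{y_ty_t^\top\}$ is temporally dependent, and the structural fact I would use to defeat it is again the geometric decay $\|A^j\|\le c\rho^j$. Using the moving-average form $x_t=A^{t-1}x_1+\sum_{j=1}^{t-1}A^{t-1-j}w_j$, I would stack the initial state and all driving noise into one Gaussian vector $\xi=\mathrm{col}(x_1,w_{1:k-1},v_{1:k})$ with block-diagonal, hence bounded-norm, covariance, so that $y_t=L_t\xi$ for fixed matrices $L_t$ and, for any unit vector $a\in\mathbb R^p$, $a^\top M_ka=\xi^\top N^{(k)}\xi-\mathbb E[\xi^\top N^{(k)}\xi]$ with $N^{(k)}=R^\top R$, where $R$ has rows $a^\top L_t$. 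The geometric decay yields the uniform-in-$k$ estimate $|a^\top L_sL_t^\top a|\le c_1\rho^{|s-t|}$, from which two bounds follow: (i) the block-lower-triangular convolution structure of $R$ together with $\sum_j\|CA^j\|<\infty$ gives $\|R\|_{\mathrm{op}}=O(1)$, hence $\|N^{(k)}\|_{\mathrm{op}}=O(1)$; and (ii) $\|N^{(k)}\|_F^2=\|RR^\top\|_F^2=\sum_{s,t}\big(a^\top L_sL_t^\top a\big)^2\le\sum_{s,t}c_1^2\rho^{2|s-t|}=O(k)$.

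With these two norm bounds in hand, the Hanson--Wright inequality (after the standard whitening $\xi=\Sigma_\xi^{1/2}\zeta$, using that $\|\Sigma_\xi\|_{\mathrm{op}}$ is bounded) gives $\mathbb P\big(|a^\top M_ka|>u\big)\le 2\exp\!\big(-c_2\min(u^2/k,\,u)\big)$. Taking $u=k^{1/2+\epsilon}$ makes the bound $2\exp(-c_2k^{2\epsilon})$, which is summable in $k$. Since the output dimension $p$ is fixed, controlling the finitely many entries $(M_k)_{ij}$ (each a scalar quadratic form, equivalently taking a fixed finite net of directions $a$) upgrades this via a union bound to $\mathbb P\big(\|M_k\|>C\,k^{1/2+\epsilon}\big)\le C'\exp(-c_2k^{2\epsilon})$, still summable; Borel--Cantelli then gives $\|M_k\|\le C\,k^{1/2+\epsilon}$ for all large $k$ almost surely, and since $\epsilon$ is arbitrary this is exactly $M_k/k\sim\mathcal C(-0.5)$. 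Combined with the $\mathcal C(-1)$ transient, $S_k-\bar\Sigma_y\sim\mathcal C(-0.5)$.

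Finally, should one need the result for noise that is only sub-Gaussian or heavy-tailed rather than exactly Gaussian, I would replace the Hanson--Wright step by a truncation-and-blocking argument: define $y_t^{(L_t)}$ retaining only the $L_t=\lceil\alpha\log t\rceil$ most recent noise terms, so $\mathbb E\|y_t-y_t^{(L_t)}\|^2\le c_3\rho^{2L_t}$ renders the truncation error negligible ($\mathcal C(-1)$) for $\alpha$ large; the truncated array is $O(\log k)$-dependent and therefore splits into $O(\log k)$ interleaved sums of independent centered terms with uniformly bounded higher moments, to each of which a Marcinkiewicz--Zygmund / Baum--Katz rate $o(k^{1/2+\epsilon})$ applies, the logarithmic factor being absorbed into $k^{\epsilon}$. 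The main obstacle in either route is decoupling the temporal dependence while keeping the exponent sharp at $1/2$; the $\epsilon$-slack built into the definition of $\mathcal C(-0.5)$ is precisely what lets the geometric-decay estimates close the argument without chasing iterated-logarithm constants.
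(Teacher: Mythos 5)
Your argument is correct, but it is worth noting that the paper does not actually prove this statement at all: Lemma~\ref{lem:liu_convergence} is imported verbatim from \cite{liu2020online} as a black box, and the paper's own appendix only invokes it (together with a martingale-based convergence argument attributed to that reference). Your proposal therefore supplies a genuinely self-contained alternative. The decomposition into a geometrically decaying transient (the Lyapunov-equation argument giving $\frac1k\sum_t(\mathbb E(y_ty_t^\top)-\bar\Sigma_y)=O(k^{-1})$) plus a centered fluctuation $M_k$ is sound, and the key mechanism --- writing $a^\top M_k a$ as a Gaussian quadratic form $\xi^\top N^{(k)}\xi-\mathbb E[\xi^\top N^{(k)}\xi]$ with $\|N^{(k)}\|_{\mathrm{op}}=O(1)$ and $\|N^{(k)}\|_F^2=O(k)$ from the bound $|a^\top L_sL_t^\top a|\le c_1\rho^{|s-t|}$, then Hanson--Wright at level $u=k^{1/2+\epsilon}$ followed by Borel--Cantelli over a fixed finite net of directions --- checks out; the resulting tail $\exp(-c_2k^{2\epsilon})$ is summable and delivers exactly the $\mathcal C(-0.5)$ rate. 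Compared with the martingale route underlying the cited lemma, your concentration approach buys explicit, nonasymptotic tail bounds and makes transparent why the $\epsilon$-slack in $\mathcal C(-0.5)$ suffices, at the cost of leaning on Gaussianity of $(x_1,w,v)$ --- which is exactly what system~\eqref{eq:linear_system} assumes, so this is no restriction for the lemma as stated. One small caveat on your closing paragraph: the truncation-and-blocking extension as written claims ``uniformly bounded higher moments'' for the truncated array, whereas Assumption~\ref{assump:nonGaussian} only guarantees finite second moments of the noise; a Marcinkiewicz--Zygmund rate of $o(k^{1/2+\epsilon})$ for sums of the quadratic quantities $y_ty_t^\top$ requires moments of $y_t$ of order approaching $4$ as $\epsilon\downarrow 0$, so that extension would need an explicit higher-moment hypothesis. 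Since the statement under review concerns the Gaussian system, this does not affect the correctness of your main argument.
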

\end{rese}

Then, we introduce the following result:
\begin{lemma}
    Given the \res{trajectory} library \res{$\mathcal{D}^{(N)}$}, we have that:
    \[\res{\mathcal{W}^{(N)}}-\mathcal{W}\sim \mathcal C(-0.5).\]
\end{lemma}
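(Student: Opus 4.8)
The plan is to split the argument according to the two cases for the trajectory library. In both cases we reduce the claim to the convergence of a sample covariance matrix built from trajectories of an auxiliary \emph{stable} LTI system, and then invoke either the strong law of large numbers (for i.i.d.\ trajectories in the multi-trajectory case) or Lemma~\ref{lem:liu_convergence} (for the single long trajectory). Throughout, I would note that $\mathcal{W}^{(N)}$ is by definition the empirical second-moment matrix $\tfrac1N\sum_{i} \vartheta^{(i)}(\vartheta^{(i)})^\top$ of the columns $\vartheta^{(i)}$ (where $\vartheta^{(i)} = \mathrm{col}(\Phi^{(N)}_{\cdot,i}, Z^{(N)}_{\cdot,i}, Y_f^{(N)}_{\cdot,i})$), so the entire statement is an assertion about an empirical covariance converging to its population/asymptotic value at rate $\mathcal{C}(-0.5)$.

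\emph{Multi-trajectory case.} Here the columns $\check\vartheta^{(i)}_t$ are i.i.d.\ with common law that of $\check\vartheta_t$, and $\mathcal{\check W} = \mathbb{E}(\check\vartheta_t\check\vartheta_t^\top)$ exists and is finite because the closed-loop system (plant~\eqref{eq:linear_system} in feedback with the stabilizing controller~\eqref{eq:stabilizing_controller}) is stable and all noises have finite second moments, so every coordinate of $\check\vartheta_t$ has a finite second moment. First I would invoke the Marcinkiewicz--Zygmund strong law of large numbers (or, for the sharper exponent, a martingale/maximal-inequality argument): for i.i.d.\ mean-zero entries with a finite moment of order $2+\delta$ for some $\delta>0$, the sample average of $\check\vartheta^{(i)}(\check\vartheta^{(i)})^\top$ converges almost surely to $\mathcal{\check W}$, and the deviation is $o(N^{-1/2+\epsilon})$ for every $\epsilon>0$. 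Since Gaussian (or, in the non-Gaussian extension, finite-second-moment) noise gives all required moments, each scalar entry of $\check{\mathcal{W}}^{(N)}-\check{\mathcal W}$ is $\mathcal{C}(-0.5)$; taking a maximum over the finitely many entries preserves the rate, giving $\check{\mathcal{W}}^{(N)}-\check{\mathcal W}\sim\mathcal{C}(-0.5)$ in operator norm.

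\emph{Single-trajectory case.} Here the columns are overlapping windows of one long closed-loop trajectory, so they are temporally correlated rather than i.i.d., and the target is the asymptotic covariance $\tilde{\mathcal W}=\lim_{t\to\infty}\mathbb{E}(\tilde\vartheta_t\tilde\vartheta_t^\top)$. The plan is to realize $\tilde\vartheta_t$ as a (time-invariant) linear readout of the state of an augmented \emph{autonomous} LTI system: stack the plant state $x$, the controller state $\phi$, and enough delayed copies of the noises $w,\nu,v$ so that the window $\mathrm{col}(\tilde\phi_t,\tilde u_{t:t+T_{ini}+T-1},\tilde y_{t:t+T_{ini}+T-1})$ becomes a fixed linear function of this augmented state, driven only by fresh noise. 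Because the closed loop is stable and the noise-delay block is nilpotent, the augmented $A$-matrix is stable, so Lemma~\ref{lem:liu_convergence} applies to this augmented system: $\tfrac1k\sum_{t=1}^k (\text{augmented state})(\cdot)^\top - \bar{\mathbb E}_t(\cdot) \sim \mathcal{C}(-0.5)$. Conjugating by the fixed readout matrix $L$ that produces $\tilde\vartheta_t$ — i.e., $\tfrac1N\sum_t \tilde\vartheta_t\tilde\vartheta_t^\top = L\big(\tfrac1N\sum_t s_t s_t^\top\big)L^\top$ — and using that $\|L(\cdot)L^\top\|\le\|L\|^2\|\cdot\|$ transfers the rate, yielding $\tilde{\mathcal W}^{(N)}-\tilde{\mathcal W}\sim\mathcal{C}(-0.5)$. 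A small bookkeeping point is the off-by-a-few-indices gap between $N$ and the Hankel width $K-T_{ini}-T+1$, which is $O(1)$ and absorbed into $\mathcal{C}(-0.5)$.

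\emph{Main obstacle.} I expect the hardest part to be the single-trajectory case, specifically constructing the augmented autonomous realization cleanly — keeping the closed-loop feedback, the exogenous excitation $\nu_t$, and the delayed-noise blocks all expressed so that every entry of $\tilde\vartheta_t$ (including the \emph{outputs} $y_{t+j}$, which mix current and past noise through the Hankel structure) is an exact time-invariant linear function of the augmented state, and checking that this augmented $A$-matrix is genuinely stable. One must also verify that $\tilde{\mathcal W}$ is exactly the $\bar{\mathbb E}_t$ limit produced by Lemma~\ref{lem:liu_convergence} under the stationary-covariance solution of the augmented Lyapunov equation, rather than some transient quantity. The i.i.d.\ (multi-trajectory) half is comparatively routine once the finite-$(2+\delta)$-moment bound is noted.
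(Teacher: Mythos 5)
Your proposal matches the paper's proof in both structure and key tools: the multi-trajectory case is handled by a strong law of large numbers argument for the i.i.d.\ columns (the paper cites the SLLN together with the law of iterated logarithm to get the $o(N^{-1/2+\epsilon})$ rate, which is interchangeable with your Marcinkiewicz--Zygmund route), and the single-trajectory case is handled exactly as you describe, by realizing $\tilde\vartheta_t$ as the output of a stable augmented autonomous LTI system driven by i.i.d.\ noise and invoking Lemma~\ref{lem:liu_convergence}. The only cosmetic difference is the choice of augmented state (the paper stacks the windowed $\phi$, $u$, $y$ together with the terminal plant state rather than delayed noise copies), which does not change the argument.
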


\begin{proof}
    For the multi-trajectory library \res{$\mathcal{\check D}^{(N)}$}, the convergence of \res{$\mathcal{\check W}^{(N)}$} to $\mathcal W$ follows directly from the strong law of large numbers and the law of iterated logarithm. 
    
    For the single-trajectory library \res{$\mathcal{\tilde D}^{(N)}$}, we analyze \res{$\mathcal{\tilde W}^{(N)}$} (the sample average of $\tilde\vartheta_t\tilde\vartheta_t^\top$) by constructing an augmented system with output $\tilde\vartheta_t$. Define the $T_{ini}+T$-length windowed vector:
    \[\res{\tilde\phi_{t, T_{ini}:T}}=\mathrm{col}(\tilde\phi_t, \cdots, \tilde\phi_{t+T_{ini}+T-1}),\]
    \[\res{\tilde u_{t, T_{ini}:T}}=\mathrm{col}(\tilde u_t, \cdots, \tilde u_{t+T_{ini}+T-1}),\]
    and \res{$\tilde y_{t, T_{ini}:T}$} is similarly defined. Then, the augmented state vector becomes:
    \begin{align}
        \res{\tilde\varphi_t=\mathrm{col}(\tilde\phi_{t, T_{ini}:T}, \tilde u_{t, T_{ini}:T}, \tilde x_{t+T_{ini}+T-1},\tilde y_{t, T_{ini}:T}).}
    \end{align}
    Define an augmented system with the following dynamics:
    \begin{align}
        &\tilde\varphi_{t+1}=\tilde A_\varphi\tilde\varphi_t+\tilde w_{\varphi, t}, \quad\tilde\vartheta_t=\mathrm{col}(\tilde\phi_t, \res{\tilde z_{t, T_{ini}}, \tilde y_{t, f}})=\tilde C_\varphi\tilde\varphi_{t}, \label{eq:enlarged_system_proof}
    \end{align}
    where 
    $\tilde A_\varphi$ can be derived based on the dynamics of the system and the controller, and is omitted here due to the space limit.
    The noise $\tilde w_{\varphi, t}$ is a structured vector containing zero entries and noise terms $\res{\omega_{t+T_{ini}+T-1}}, w_{t+T_{ini}+T-1}$ and their linear transformations, 
    and $\tilde C_\varphi$ is a selection matrix taking the corresponding elements from $\tilde\varphi_t$. 

    \res{To prove the convergence of $\mathcal{\tilde W}^{(N)}$, we note that the primary challenge arises from the fact that the outputs $\tilde\vartheta_t$ of this augmented system are correlated across time, violating standard i.i.d. assumptions required for classical convergence results. However, since the augmented system in~\eqref{eq:enlarged_system_proof} is stable and driven by i.i.d. noise $\tilde w_{\varphi, t}$, it satisfies the conditions of Lemma~\ref{lem:liu_convergence}. Therefore, by Lemma~\ref{lem:liu_convergence}, the sample covariance of the temporally correlated outputs $\tilde\vartheta_t$ still converges at the standard rate, and it follows that:}
    \[\res{\mathcal{\tilde W}^{(N)}}-\mathcal{\tilde W}\sim\mathcal C(-0.5).\qedhere\]
\end{proof}
\subsection{Convergence of mean and covariance}
Next, we are ready to consider the convergence of mean and covariance of $\bar y_f$. For convenience of expressing the asymptotic distribution, for the single-trajectory behavior library $\mathcal{\tilde D}^N$, let
\[\tilde z_{t, T_{ini}}^\perp=\tilde z_{t, T_{ini}}-\mathbb{\bar E}_t(\tilde z_{t, T_{ini}}\tilde\phi_t^\top)\mathbb{\bar E}_t(\tilde\phi_t\tilde\phi_t^\top)^{\res{\dagger}}\tilde\phi_t,\]
\[\tilde y_{t, f}^\perp=\tilde y_{t, f}-\mathbb{\bar E}_t(\tilde y_{t, f}\tilde\phi_t^\top)\mathbb{\bar E}_t(\tilde\phi_t\tilde\phi_t^\top)^{\res{\dagger}}\tilde\phi_t.\]
On the other hand, for the multi-trajectory library \res{$\mathcal{\check D}^{(N)}$}, define
\[\check z_{T_{ini}}^\perp=\check z_{T_{ini}}-\mathbb{E}(\check z_{T_{ini}}\check\phi_1^\top)\mathbb{E}(\check\phi_1\check\phi_1^\top)^{\dagger}\check\phi_1,\]
\[\check y_{f}^\perp=\check y_{f}-\mathbb{E}(\check y_{f}\phi_1^\top)\mathbb{E}(\check\phi_1\check\phi_1^\top)^{\dagger}\check\phi_t.\]
Using these notations, we have the following convergence result:
\begin{lemma}\label{lemma:mean_variance_convergence}
    The coefficients in the mean of $\bar y_f$ satisfies
    \begin{align}
        &\res{\tilde\Theta_f^{(N)}}-\tilde\eta_{f, \infty}\sim \mathcal C(-0.5),\quad\res{\check\Theta_f^{(N)}}-\check\eta_{f, \infty}\sim \mathcal C(-0.5), \nonumber
    \end{align}
    where
    \[\tilde \eta_{f, \infty}=\mathbb{\bar E}_t(\tilde y_{t, f}^\perp\tilde z_{t,T_{ini}}^{\perp\top})[\mathbb{\bar E}_t(\tilde z_{t,T_{ini}}^\perp\tilde z_{t,T_{ini}}^{\perp\top})]^{-1}.\]
    \[\check \eta_{f, \infty}=\mathbb{E}(\check y_{t}^\perp\check z_{T_{ini}}^{\perp\top})[\mathbb{E}_t(\check z_{T_{ini}}^\perp\check z_{T_{ini}}^{\perp\top})]^{-1}.\]
    On the other hand, the covariance of the generated sample $\bar y_f$ satisfies:
    \begin{align}
        &\res{\tilde\Sigma_f^{(N)}}-\mathcal{\tilde Y}_{f, \infty}\sim \mathcal C(-0.5),\quad\res{\check\Sigma_f^{(N)}}-\mathcal{\check Y}_{f, \infty}\sim \mathcal C(-0.5),\nonumber
    \end{align}
    where
    \begin{align}
        \mathcal{\tilde Y}_{f, \infty}&=\mathbb{\bar E}_t(\tilde y_{t, f}^\perp\tilde y_{t, f}^{\perp\top})\nonumber \\
        &-\mathbb{\bar E}_t(\tilde y_{t, f}^\perp\tilde z_{t,T_{ini}}^{\perp\top})\mathbb{\bar E}_t(\tilde z_{t,T_{ini}}^\perp\tilde z_{t,T_{ini}}^{\perp\top})^{-1}\mathbb{\bar E}(\tilde z_{t,T_{ini}}^\perp\tilde y_{t, f}^{\perp\top}),\nonumber \\
        \mathcal{\check Y}_{f, \infty}&=\mathbb{E}(\check y_{f}^\perp\check y_{f}^{\perp\top})\nonumber \\
        &-\mathbb{E}(\check y_{f}^\perp\check z_{T_{ini}}^{\perp\top})\mathbb{E}(\check z_{T_{ini}}^\perp\check z_{T_{ini}}^{\perp\top})^{-1}\mathbb{E}(\check z_{T_{ini}}^\perp\check y_{f}^{\perp\top}).
    \end{align}
\end{lemma}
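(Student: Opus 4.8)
The plan is to express both $\Theta_f^{(N)}$ and $\Sigma_f^{(N)}$ as fixed rational functions of the sample second-moment matrix $\mathcal W^{(N)}$, to identify what these functions equal at the limit $\mathcal W$, and then to transfer the $\mathcal C(-0.5)$ rate of $\mathcal W^{(N)}-\mathcal W$ (established just above) through a local-Lipschitz argument. The only genuinely delicate point will be that this last step requires the relevant limiting sub-block of $\mathcal W$ to be nonsingular.

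First I would invoke the full-row-rank property of $\Xi^{(N)}$ (Remark~\ref{rem:xi_full_rank}, which holds a.s.\ for $N$ large) to write $\Xi^{(N)\dagger}=\Xi^{(N)\top}(\Xi^{(N)}\Xi^{(N)\top})^{-1}$ and to note that $I-\Xi^{(N)\dagger}\Xi^{(N)}$ is the symmetric idempotent projector onto $\ker\Xi^{(N)}$, so $(I-\Xi^{(N)\dagger}\Xi^{(N)})(I-\Xi^{(N)\dagger}\Xi^{(N)})^\top=I-\Xi^{(N)\dagger}\Xi^{(N)}$. Partitioning $\mathcal W^{(N)}$ and its limit $\mathcal W$ conformally with $\vartheta_t=\mathrm{col}(\phi,z,y_f)$ into blocks $\mathcal W^{(N)}_{\phi\phi},\mathcal W^{(N)}_{\phi z},\mathcal W^{(N)}_{zz},\mathcal W^{(N)}_{y\phi},\mathcal W^{(N)}_{yz},\mathcal W^{(N)}_{yy}$, and setting $M^{(N)}\triangleq\begin{bmatrix}\mathcal W^{(N)}_{\phi\phi}&\mathcal W^{(N)}_{\phi z}\\\mathcal W^{(N)}_{z\phi}&\mathcal W^{(N)}_{zz}\end{bmatrix}=\tfrac1N\Xi^{(N)}\Xi^{(N)\top}$, a short computation from Proposition~\ref{prop:model_distribution_single} and Remark~\ref{rem:interpretation} gives
\begin{align}
\Theta_f^{(N)}&=\begin{bmatrix}\mathcal W^{(N)}_{y\phi}&\mathcal W^{(N)}_{yz}\end{bmatrix}(M^{(N)})^{-1}\begin{bmatrix}0\\I\end{bmatrix},\nonumber\\
\Sigma_f^{(N)}&=\mathcal W^{(N)}_{yy}-\begin{bmatrix}\mathcal W^{(N)}_{y\phi}&\mathcal W^{(N)}_{yz}\end{bmatrix}(M^{(N)})^{-1}\begin{bmatrix}\mathcal W^{(N)}_{\phi y}\\\mathcal W^{(N)}_{zy}\end{bmatrix},\nonumber
\end{align}
i.e.\ $\Sigma_f^{(N)}$ is the Schur complement of $M^{(N)}$ in $\mathcal W^{(N)}$ (the $1/N$ of Proposition~\ref{prop:model_distribution_single} being absorbed into $\mathcal W^{(N)}_{yy}$). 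Both right-hand sides are rational in the entries of $\mathcal W^{(N)}$ and continuously differentiable wherever $M^{(N)}$ is nonsingular, and the same formulas cover both libraries; only the meaning of $\mathcal W$ changes ($\mathcal{\check W}=\mathbb E(\check\vartheta\check\vartheta^\top)$ versus $\mathcal{\tilde W}=\bar{\mathbb E}_t(\tilde\vartheta_t\tilde\vartheta_t^\top)$).

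Next I would identify the limiting values. Replacing $\mathcal W^{(N)}$ by $\mathcal W$ and applying the Frisch--Waugh--Lovell (partial-regression) identity---a routine block-inverse manipulation of $M$, which remains valid with pseudo-inverses since $\mathrm{range}(\mathcal W_{\phi z})\subseteq\mathrm{range}(\mathcal W_{\phi\phi})$---the first expression collapses to $\mathbb E(y^\perp z^{\perp\top})[\mathbb E(z^\perp z^{\perp\top})]^{-1}$ and the Schur complement to $\mathbb E(y^\perp y^{\perp\top})-\mathbb E(y^\perp z^{\perp\top})[\mathbb E(z^\perp z^{\perp\top})]^{-1}\mathbb E(z^\perp y^{\perp\top})$, where $(\cdot)^\perp$ denotes the residual after linear projection onto $\phi$. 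For the multi-trajectory library these are exactly $\check\eta_{f,\infty}$ and $\mathcal{\check Y}_{f,\infty}$, and reading $\mathbb E$ as the stationary average $\bar{\mathbb E}_t$ they are exactly $\tilde\eta_{f,\infty}$ and $\mathcal{\tilde Y}_{f,\infty}$; this matching is just bookkeeping of the definitions of $(\cdot)^\perp$ given before the lemma.

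It then remains to transfer the rate. The lemma above gives $\mathcal W^{(N)}\to\mathcal W$ almost surely with $\mathcal W^{(N)}-\mathcal W\sim\mathcal C(-0.5)$. Provided the limiting block $M$---the stationary covariance of $\mathrm{col}(\phi,z)$---is positive definite, the maps $\mathcal W^{(N)}\mapsto\Theta_f^{(N)}$ and $\mathcal W^{(N)}\mapsto\Sigma_f^{(N)}$ are continuously differentiable, hence $L$-Lipschitz on a compact neighborhood $U$ of $\mathcal W$, and almost surely $\mathcal W^{(N)}\in U$ for all large $N$; therefore $\|\Theta_f^{(N)}-\eta_{f,\infty}\|\le L\|\mathcal W^{(N)}-\mathcal W\|$ and $\|\Sigma_f^{(N)}-\mathcal Y_{f,\infty}\|\le L\|\mathcal W^{(N)}-\mathcal W\|$ eventually (with $\eta_{f,\infty},\mathcal Y_{f,\infty}$ the corresponding $\tilde\cdot$ or $\check\cdot$ limits). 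Since $\mathcal C(-0.5)$ is closed under multiplication by a deterministic constant, this gives $\Theta_f^{(N)}-\eta_{f,\infty}\sim\mathcal C(-0.5)$ and $\Sigma_f^{(N)}-\mathcal Y_{f,\infty}\sim\mathcal C(-0.5)$ for both libraries. The main obstacle is precisely the positive-definiteness of $M$: a finite-$N$ full-row-rank statement for $\Xi^{(N)}$ does not by itself force $M=\lim\tfrac1N\Xi^{(N)}\Xi^{(N)\top}$ to be nonsingular, yet nonsingularity of $M$ is what makes matrix inversion---hence the two maps---smooth at $\mathcal W$. Establishing it is the limiting, uniform-in-$N$ form of the excitation argument of Appendix~\ref{appendix:xi_full_rank}: the independent injections $\nu_t$ (covariance $R_{\mathrm{ctrl}}\succ0$), $w_t$, and $v_t$ (covariance $R\succ0$), together with the minimality of the plant and of the controller and the positive-definite initializations ($\Sigma_\phi\succ0$, $\Sigma\succ0$ from Assumptions~\ref{assump:input_exciting} and~\ref{assump:multiple_trajectories}), must be shown to lower-bound the stationary covariance of $\mathrm{col}(\phi,z)$ by a positive multiple of the identity. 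A minor secondary point is tracking pseudo-inverses in the single-trajectory case, which is mild because $\Xi^{(N)}$ is almost surely full row rank for large $N$.
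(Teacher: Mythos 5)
Your proposal is correct and follows essentially the same route as the paper: express $\Theta_f^{(N)}$ and $\Sigma_f^{(N)}$ as fixed rational (Schur-complement) functions of the sample second-moment matrix $\mathcal W^{(N)}$, identify their values at the limit $\mathcal W$ with $\eta_{f,\infty}$ and $\mathcal Y_{f,\infty}$ via block inversion, and transfer the $\mathcal C(-0.5)$ rate through differentiability (the paper's $\mathcal A_3,\mathcal A_4$ are just the nested form of your direct Schur complement, obtained by first projecting out $\Phi^{(N)}$). The nonsingularity of the limiting block $M$ that you flag as the remaining obstacle is indeed needed, and the paper likewise relies on it (asserting differentiability of $\mathcal A_3,\mathcal A_4$ at $\mathcal W$ on the strength of Assumptions~\ref{assump:input_exciting} and~\ref{assump:multiple_trajectories}) without spelling out the uniform excitation argument.
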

\begin{proof}
    This proof applies to both behavior libraries, so we omit the tilde and check accents. The key idea is to express the mean \res{$\Theta_f^{(N)}$} and covariance \res{$\Sigma_f^{(N)}$} as functions of \res{$\mathcal{W}^{(N)}$}, then establish their convergence using Lemma 3.3 from~\cite{liu2020online}.
    
    First, we derive the expression for \res{$\Theta_f^{(N)}$}. Recall that $\res{\Theta_f^{(N)}}z_{T_{ini}}$ represents the special solution:
    \begin{align}
        \res{Y_f^{(N)}}(\arg\min_g\|g\|^2_2\mid \mathrm{col}(\res{\Phi^{(N)}, Z^{(N)}})g=\mathrm{col}(0, z_{T_{ini}})).\label{eq:tilde_theta_f_equal}
    \end{align}
    The constraint $\res{\Phi^{(N)}}g=0$ implies $g=(I-\res{\Phi^{(N)\dagger}\Phi^{(N)}})h$ for any $h\in\mathbb R^N$. \res{Substituting this into the optimization problem in~\eqref{eq:tilde_theta_f_equal}, we obtain the equivalent formulation:}
    \begin{align}
        &(\arg\min_h\|(I-\res{\Phi^{(N)\dagger}\Phi^{(N)}})h\|_2^2\mid\nonumber \\
        &\qquad\qquad\res{Z^{(N)}}(I-\res{\Phi^{(N)\dagger}\Phi^{(N)}})h=z_{T_{ini}}).\label{eq:min_h}
    \end{align}
    Let \res{$Z^{(N)\perp}=Z^{(N)}(I-\Phi^{(N)\dagger}\Phi^{(N)})$}. The equality constraint in~\eqref{eq:min_h} implies:
    \begin{align}
        h=\res{Z^{(N)\perp\dagger}}z_{T_{ini}}+(I-\res{Z^{(N)\perp\dagger}Z^{(N)\perp}})\tilde h\label{eq:h_expression}
    \end{align}
    for any $\tilde h\in\mathbb R^N$. 
    Then, the objective function becomes:
    \begin{align}
        &\|g\|_2^2=\|(I-\res{\Phi^{(N)\dagger}\Phi^{(N)}})h\|_2^2\nonumber \\
        &=\|(I-\res{\Phi^{(N)\dagger}\Phi^{(N)})Z^{(N)\perp\dagger}}z_{T_{ini}}\|_2^2 \nonumber \\
        &\qquad\qquad+\|(I-\res{\Phi^{(N)\dagger}\Phi^{(N)}})(I-\res{Z^{(N)\perp\dagger}Z^{(N)\perp}})\tilde h\|_2^2.\nonumber
    \end{align}
    The minimum is achieved at $\tilde h=0$, yielding:
    \begin{align}
        (\arg\min_{g}\|g\|_2^2\mid 
        \res{\Xi^{(N)}}g&=\mathrm{col}(0, z_{T_{ini}}))\nonumber \\
        &=(I-\res{\Phi^{(N)\dagger}\Phi^{(N)})Z^{(N)\perp\dagger}}z_{T_{ini}}.\nonumber
    \end{align}
    Define $\res{Y_f^{(N)\perp}}=\res{Y_f^{(N)}}(I-\res{\Phi^{(N)\dagger}\Phi^{(N)}})$. This leads to the simplified expression:
    \[\res{\Theta_f^{(N)}=Y_f^{(N)\perp}Z^{(N)\perp\dagger}}.\]

    Next, we establish the connection between $\res{\mathcal{W}^{(N)}}$ and \res{$Y_f^{(N)\perp} Z^{(N)\perp\dagger}$}. We prove in Appendix~\ref{appendix:xi_full_rank} that \res{$Z^{(N)\perp}$} has full row rank, which implies that:
    \begin{rese}
    \begin{align}
    &Z^{(N)\perp\dagger}=Z^{(N)\perp\top}(Z^{(N)\perp}Z^{(N)\perp\top})^{-1}, \nonumber \\
    &Z^{(N)\perp}Z^{(N)\perp\top}=Z^{(N)}Z^{(N)\top}-Z^{(N)}\Phi^{(N)\dagger}\Phi^{(N)}Z^{(N)\top},\nonumber \\
    &=Z^{(N)}Z^{(N)\top}-Z^{(N)}\Phi^{(N)\top}(\Phi^{(N)}\Phi^{(N)\top})^\dagger\Phi^{(N)}Z^{(N)\top},\nonumber
    \end{align}
    \end{rese}
    where the last equality can be verified by computing the pseudo-inverse of $\res{\Phi^{(N)}}$ using its singular value decomposition.

    Then, we aim to express \res{$Y_f^{(N)\perp}Z^{(N)\perp\dagger}$} as a differentiable function of \res{$\mathcal W^{(N)}$}. Consider a matrix $X\in\mathbb R^{(n_u+(T_{ini}+T)(p+m))\times (n_u+(T_{ini}+T)(p+m))}$ with the same dimension as $\mathcal{W}$. For simplicity, we split $X$ as follows:
    \begin{align}
        X=\res{\begin{bmatrix}
            X_{\phi\phi} & X_{\phi z} & X_{\phi y} \\
            X_{z\phi} & X_{zz} & X_{zy} \\
            X_{y\phi} & X_{yz} & X_{yy}
        \end{bmatrix}},\label{eq:X_split}
    \end{align}
    where $X_{\phi\phi}\in\mathbb R^{n_u\times n_u}, X_{yy}\in\mathbb R^{Tp\times Tp}, X_{zz}\in\mathbb R^{(T_{ini}(m+p)+Tm)\times (T_{ini}(m+p)+Tm)}$, and the rest of the submatrices are determined accordingly. Define the function for the matrix $X$:
    \begin{align}
        &\mathcal A_3(X)\nonumber \\
        &=(X_{yz}-X_{y\phi}(X_{\phi\phi})^\dagger X_{\phi z})(X_{zz}-X_{z\phi}(X_{\phi\phi})^\dagger X_{\phi z})^{-1}.\nonumber
    \end{align}
    It can be established that $\mathcal A_3$ is differentiable at $\mathcal{W}$, and that 
    \[\res{Y_f^{(N)\perp} Z^{(N)\perp\dagger}}=\mathcal A_3(\res{\mathcal{W}^{(N)}}),\quad \res{\check\eta_{f, \infty}}=\mathcal A_3(\mathcal W).\]
    \res{Further define $\mathcal{\tilde A}_3(X)=\mathcal {A}_3(X+\mathcal W)-\check\eta_{f, \infty}$, then $\mathcal{\tilde A}_3$ is differentiable at 0 and $\mathcal{\tilde A}_3(0)=0$. Using Lemma 3~3) in~\cite{liu2020online},
    \res{\[\mathcal{A}_3(\mathcal W^{(N)}-\mathcal W)=\mathcal A_3(\mathcal W^{(N)})-\check\eta_{f, \infty}\sim \mathcal C(-0.5).\]}
    The first equation in the lemma then follows after some calculations regarding $\mathcal A_3(\mathcal{W})$.}

    Next, we consider the simplification of $\res{\Sigma_f^{(N)}}$. It can be shown through matrix calculations that:
    \[I-\res{\Xi^{(N)\dagger}\Xi^{(N)}}=(I-\res{\Phi^{(N)\dagger}\Phi^{(N)}})(I-\res{Z^{(N)\perp\dagger}Z^{(N)\perp}}).\]
    Thus, we have the simplification:
    \begin{rese}
    \begin{align}
        \Sigma_f^{(N)}=\frac{1}{N}Y_f^{(N)\perp}&(I-Z^{(N)\perp\dagger}Z^{(N)\perp})\cdot \nonumber \\
        &(I-Z^{(N)\perp\dagger}Z^{(N)\perp})^\top Y_f^{(N)\perp\top}.\nonumber
    \end{align}
    \vspace{-\baselineskip}%
    \end{rese}
    
    \res{Based on this simplification, similar to the case of $\Theta_f^{(N)}$, an explicit relationship between $\Sigma_f^{(N)}$ and $\mathcal W^{(N)}$ can be established (explicit expression omitted for brevity).} We then define the function for the matrix $X$ as follows:
    \begin{align}
        &\mathcal A_4(X)=X_{yy}-X_{y\phi}(X_{\phi\phi})^{\dagger}X_{\phi y}\nonumber \\
        &-(X_{yz}-X_{y\phi}(X_{\phi\phi})^{\dagger}X_{\phi z})
        (X_{zz}-X_{z\phi}(X_{\phi\phi})^{\dagger}X_{\phi z})^{-1}\nonumber \\
        &\qquad\qquad\qquad\qquad\cdot(X_{zy}-X_{z\phi}(X_{\phi\phi})^{\dagger}X_{\phi y}).\nonumber
    \end{align}
    \res{It can be established that $\mathcal A_4$ is differentiable at $\mathcal{W}, \Sigma_f^{(N)}=\mathcal A_4(\mathcal{W}^{(N)})$ and $\mathcal{\check Y}_{f, \infty}=\mathcal A_4(\mathcal{W})$. Using Lemma~3~3) in~\cite{liu2020online}, the second equation in the lemma follows.}
\end{proof}

\subsection{Establishing the relationship between the limit and KF}
We now prove by induction that the derived limit corresponds to the recursion of a KF~\eqref{eq:kf}. For brevity, we present the proof only for the single-trajectory case, as the multi-trajectory case follows similarly \res{by replacing} $\mathbb{\bar E}_t$ with $\mathbb E$. To establish this connection, we first introduce several notations. For the trajectory under consideration and $t=2, 3, \cdots$, define:
\[z_{t, ini}^{-}=\mathrm{col}(u_1, \cdots, u_{t-1}, y_1, \cdots, y_{t-1}),\]
\[z_{t, ini}=\mathrm{col}(u_1, \cdots, u_{t-1}, y_1, \cdots, y_t),\]
and $z_{1, ini}=y_1$. Then, for the KF in~\eqref{eq:kf} initialized by $\hat x_1^-=0, P_1^-=\Sigma$, similar to~\eqref{eq:theta_psi_Tini_def}, the predicted expectation of the KF can be simplified as:
\[\hat x_t^-=\hat\theta_t^-(\Sigma)z_{t, ini}^{-},\quad\hat x_t=\hat\theta_t(\Sigma)z_{t, ini},\]
where both coefficients $\hat\theta_t^-:\mathbb R^{n\times n}\to \mathbb R^{n\times (t-1)(p+m)}$ and $\hat\theta_t:\mathbb R^{n\times n}\to\mathbb R^{n\times (t-1)(p+m)+p}$ are functions of $\Sigma$ similar to the definition of $\hat \theta_{T_{ini}+1}^-$. We also define the covariance matrices $P_t^-, P_t$ as functions of $\Sigma$ similar to the definition of $P_{T_{ini}}^-$ in the main body of the paper.

On the other hand, define the following terms regarding the \res{trajectory} library as:
\begin{align}
    \tilde z_{t+l}^{-}=\mathrm{col}(\tilde u_{t+l-1}, \cdots, &\tilde u_{t}, \tilde y_{t}, \cdots, \tilde y_{t+l-1}),\nonumber \\
    \tilde z_{t+l}=\mathrm{col}(\tilde u_{t+l-1}, \cdots, &\tilde u_{t}, \tilde y_{t}, \cdots, \tilde y_{t+l}),\nonumber
\end{align}
with $l=1, \cdots, T_{ini}$ and $\tilde z_t=\tilde y_t$. Note that the ordering of the vectors $u$ in this definition is slightly different from the main body of the paper, which necessitates the use of a permutation matrix later. Moreover, in the rest of this subsection, for a random vector $h_{t+l}, l=0, 1, \cdots$, define the operation
\[h_{t+l}^\perp=h_{t+l}-\mathbb{\bar E}_t(h_{t+l}\tilde\phi_{t}^\top)\mathbb{\bar E}_t(\tilde\phi_{t}\tilde\phi_{t}^\top)^{\dagger}\tilde\phi_{t}.\]

Next, define the coefficients $\tilde\theta_{l+1}^-, \tilde\theta_l$ as follows for $l=1, \cdots, T_{ini}$:
\[\tilde\theta_l=\mathbb{\bar E}_t(\tilde x_{t+l-1}^\perp\tilde z_{t+l-1}^{\perp\top})\mathbb{\bar E}_t(\tilde z_{t+l-1}^\perp\tilde z_{t+l-1}^{\perp\top})^{-1},\]
\[\tilde\theta_{l+1}^-=\mathbb{\bar E}_t(\tilde x_{t+l}^\perp\tilde z_{t+l}^{-\perp\top})\mathbb{\bar E}_t(\tilde z_{t+l}^{-\perp}\tilde z_{t+l}^{-\perp\top})^{-1},\]
Similarly, define the covariance matrices:
\begin{align}
    \tilde P_l&=\mathbb{\bar E}_t(\tilde x_{t+l-1}^{\perp}\tilde x_{t+l-1}^{\perp\top})-\mathbb{\bar E}_t(\tilde x_{t+l-1}^\perp\tilde z_{t+l-1}^{\perp\top})\nonumber \\
    &\qquad\qquad\cdot\mathbb{\bar E}_t(\tilde z_{t+l-1}^{\perp}\tilde z_{t+l-1}^{\perp\top})^{-1}\mathbb{\bar E}_t(\tilde z_{t+l-1}^\perp\tilde x_{t+l-1}^{\perp\top}).\nonumber \\
    \tilde P_{l+1}^-&=\mathbb{\bar E}_t(\tilde x_{t+l}^{\perp}\tilde x_{t+l}^{\perp\top})\nonumber \\
    &-\mathbb{\bar E}_t(\tilde x_{t+l}^\perp\tilde z_{t+l}^{-\perp\top})\mathbb{\bar E}_t(\tilde z_{t+l}^{-\perp}\tilde z_{t+l}^{-\perp\top})^{-1}\mathbb{\bar E}_t(\tilde z_{t+l}^{-\perp}\tilde x_{t+l}^{\perp\top}),\nonumber
\end{align}

Next, we prove by induction that:
\[\tilde\theta_l=\hat\theta_{l}(\Sigma)\mathcal Q_{l-1, l},\quad \tilde P_l=P_l(\Sigma),\]
with $l=1, \cdots, T_{ini}$, where $\mathcal Q_{l, q}$ is a permutation matrix:
\[\mathcal Q_{l, q}=\begin{bmatrix} \mathcal Q_l & 0 \\ 0 & I_{pq}\end{bmatrix},\]
and $Q_l$ is a permutation matrix:
\[ \mathcal Q_l = [q_{ij}]_{l\times l} \otimes I_m,\ q_{ij}=\begin{cases} 1 & \text{if } i+j \equiv l+1 \\ 0 & \text{otherwise}\end{cases}.\]
This permutation arises from the different orderings of \( u \) in the definitions of \( \tilde{z}_{t+l} \) in this proof and in the main body.

\textbf{Base case ($l=1$)}: For $\Sigma=\mathbb{\bar E}_t(\tilde x_{t}^\perp\tilde x_{t}^{\perp\top})$, \res{it follows that}
\begin{align}
    \tilde P_{1}=\Sigma-\Sigma C^\top(C\Sigma C^\top+R)^{-1}C\Sigma.\nonumber
\end{align}
Moreover, according to the definition of $\tilde x_{t}^\perp$,
\begin{align}
    \Sigma&=\mathbb{\bar E}_t(\tilde x_{t}^\perp\tilde x_{t}^{\perp\top})=\mathbb{\bar E}_t(\tilde x_{t}\tilde x_{t}^\top)\nonumber \\
    &\qquad\qquad-\mathbb{\bar E}_t(\tilde x_{t}\tilde\phi_{t}^\top)\mathbb{\bar E}_t(\tilde\phi_{t}\tilde\phi_{t}^\top)^{\dagger}\mathbb{\bar E}_t(\tilde\phi_{t}\tilde x_{t}^\top).
\end{align}
Then, using the definition of $\mathbb{\bar E}_t$, \res{we obtain that} $\Sigma$ matches the expression in Theorem~\ref{thm:convergence_N_multiple}.

Similarly, the expression of $\tilde\theta_{1}$ is:
\[\tilde\theta_1=\Sigma C^\top(C\Sigma C^\top+R)^{-1}.\]
Hence, $\tilde\theta_1$ and $\tilde P_1$ correspond to the first measurement update step of the KF in~\eqref{eq:kf}.

\textbf{Induction step}: Assume the relations hold for some $l\geq 1$, we now prove that they hold for $l+1$ using the following lemmas:
\begin{lemma}\label{lemma:kf_prediction_recursion}
    The terms $\tilde P_l, \tilde P_{l+1}^-$ and $\tilde\theta_l, \tilde\theta_{l+1}^-$ have the following relationships respectively:
   \[\tilde P_{l+1}^-=A\tilde P_lA^\top+Q,\quad \tilde\theta_{l+1}^-=\begin{bmatrix}B & A\tilde\theta_{l}\end{bmatrix}.\]
\end{lemma}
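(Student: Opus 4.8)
The plan is to prove Lemma~\ref{lemma:kf_prediction_recursion} by pushing the orthogonal decomposition of $\tilde x_{t+l-1}^\perp$ through the one-step state equation, working throughout with the pairing under which two zero-mean random vectors $a,b$ are orthogonal when $\bar{\mathbb E}_t(ab^\top)=0$. First I would record the structural consequences of the feedback model in Assumption~\ref{assump:input_exciting}. Unrolling the controller gives $\tilde\phi_{t+l-1}=A_{\mathrm{ctrl}}^{\,l-1}\tilde\phi_t+\sum_{j=0}^{l-2}A_{\mathrm{ctrl}}^{\,l-2-j}B_{\mathrm{ctrl}}\tilde y_{t+j}$; since the $\perp$-operation is linear and $\tilde\phi_t^\perp=0$, this yields $C_{\mathrm{ctrl}}\tilde\phi_{t+l-1}^\perp\in\mathrm{span}(\tilde z_{t+l-1}^\perp)$, so that $\tilde u_{t+l-1}^\perp=C_{\mathrm{ctrl}}\tilde\phi_{t+l-1}^\perp+\nu_{t+l-1}$ splits into a part lying in $\mathrm{span}(\tilde z_{t+l-1}^\perp)$ plus the fresh excitation $\nu_{t+l-1}$. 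From the causal ordering $(\tilde\phi_s,\tilde x_s)\to\tilde y_s\to\tilde u_s\to(\tilde x_{s+1},\tilde\phi_{s+1})$ and the mutual independence and i.i.d.\ structure of $\{w_s\},\{v_s\},\{\nu_s\}$, I would note: $\nu_{t+l-1}$ and $w_{t+l-1}$ are orthogonal to $\tilde\phi_t$ and to $\tilde z_{t+l-1}$, hence to $\tilde z_{t+l-1}^\perp$; $w_{t+l-1}$ is in addition orthogonal to $\tilde u_{t+l-1}$ and to $\tilde x_{t+l-1}^\perp$; and directly from the definitions, $\tilde z_{t+l}^{-\perp}=\mathrm{col}(\tilde u_{t+l-1}^\perp,\tilde z_{t+l-1}^\perp)$.

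Next I would set $e:=\tilde x_{t+l-1}^\perp-\tilde\theta_l\tilde z_{t+l-1}^\perp$, so that by the definitions of $\tilde\theta_l$ and $\tilde P_l$ one has $\bar{\mathbb E}_t(e\,\tilde z_{t+l-1}^{\perp\top})=0$ and $\bar{\mathbb E}_t(ee^\top)=\tilde P_l$. The claim to establish is $e\perp\tilde u_{t+l-1}^\perp$: the $C_{\mathrm{ctrl}}\tilde\phi_{t+l-1}^\perp$ piece lies in $\mathrm{span}(\tilde z_{t+l-1}^\perp)$, to which $e$ is orthogonal, while $\nu_{t+l-1}\perp e$ by the causal structure, since $e$ involves the controller noise only through $\nu_t,\dots,\nu_{t+l-2}$ together with the mutually independent process and observation noises. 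Combined with $e\perp\tilde z_{t+l-1}^\perp$, this gives $e\perp\tilde z_{t+l}^{-\perp}$; hence, invoking that the Gram matrix $\bar{\mathbb E}_t(\tilde z_{t+l}^{-\perp}\tilde z_{t+l}^{-\perp\top})$ is invertible by the full-rank statement of Remark~\ref{rem:xi_full_rank} and Appendix~\ref{appendix:xi_full_rank}, the best linear estimate of $\tilde x_{t+l-1}^\perp$ given $\tilde z_{t+l}^{-\perp}$ has coefficient $\begin{bmatrix}0 & \tilde\theta_l\end{bmatrix}$ with residual $e$ --- i.e.\ adjoining $\tilde u_{t+l-1}^\perp$ to the regressors changes nothing.

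Finally I would propagate through $\tilde x_{t+l}=A\tilde x_{t+l-1}+B\tilde u_{t+l-1}+w_{t+l-1}$. Applying $\perp$ (which is linear, with $w_{t+l-1}^\perp=w_{t+l-1}$) and substituting the decomposition just obtained gives
\[
\tilde x_{t+l}^\perp=\begin{bmatrix}B & A\tilde\theta_l\end{bmatrix}\tilde z_{t+l}^{-\perp}+\bigl(Ae+w_{t+l-1}\bigr),
\]
where the remainder $Ae+w_{t+l-1}$ is orthogonal to $\tilde z_{t+l}^{-\perp}$ (since $e$ is, and $w_{t+l-1}$ is zero-mean and independent of every component of $\tilde z_{t+l}^{-\perp}$). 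By uniqueness of the orthogonal projection this forces $\tilde\theta_{l+1}^-=\begin{bmatrix}B & A\tilde\theta_l\end{bmatrix}$, and taking second moments of the remainder --- with cross term $A\,\bar{\mathbb E}_t(e\,w_{t+l-1}^\top)=0$ by independence and $\bar{\mathbb E}_t(w_{t+l-1}w_{t+l-1}^\top)=Q$ --- gives $\tilde P_{l+1}^-=A\tilde P_l A^\top+Q$. The multi-trajectory case is identical after replacing $\bar{\mathbb E}_t$ by $\mathbb E$.

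The step I expect to be the main obstacle is the middle one: showing that enlarging the regressor vector from $\tilde z_{t+l-1}^\perp$ to $\tilde z_{t+l}^{-\perp}=\mathrm{col}(\tilde u_{t+l-1}^\perp,\tilde z_{t+l-1}^\perp)$ leaves the projection of $\tilde x_{t+l-1}^\perp$ unchanged. This is precisely where the closed-loop nature of the data-collection controller matters: $\tilde u_{t+l-1}$ is correlated with $\tilde x_{t+l-1}$ through feedback, so one cannot simply invoke independence of inputs and states; the argument must isolate the exogenous part $\nu_{t+l-1}$ of $\tilde u_{t+l-1}^\perp$ and exploit the lower-triangular (causal) dependence of the noise sequences. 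Everything else reduces to routine linear-projection bookkeeping.
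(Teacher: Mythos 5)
Your proposal is correct and follows essentially the same route as the paper: both proofs hinge on showing that the projection residual $\tilde x_{t+l-1}^\perp-\tilde\theta_l\tilde z_{t+l-1}^\perp$ is orthogonal to $\tilde u_{t+l-1}^\perp$ by decomposing the closed-loop input into a part lying in $\mathrm{span}(\tilde z_{t+l-1}^\perp)$ plus the fresh excitation $\nu_{t+l-1}$. The only difference is presentational --- you verify the enlarged projection directly via uniqueness, whereas the paper reaches the same cancellation through an explicit block-matrix-inversion of the Gram matrix of $\tilde z_{t+l}^{-\perp}$.
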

\begin{proof}
    We first consider the relationship between $\tilde P_{l+1}^-$ and $\tilde P_l$. For simplicity of \res{notation}, let
    \[\Gamma_{xx}=\mathbb{\bar E}_t(\tilde x_{t+l-1}^\perp\tilde x_{t+l-1}^{\perp\top}), \Gamma_{xz}=\Gamma_{zx}^\top=\mathbb{\bar E}_t(\tilde x_{t+l-1}^\perp\tilde z_{t+l-1}^{\perp\top}),\]
    \[ \Gamma_{zz}=\mathbb{\bar E}_t(\tilde z_{t+l-1}^\perp\tilde z_{t+l-1}^{\perp\top}), \Gamma_{xu}=\Gamma_{xu}^\top=\mathbb{\bar E}_t(\tilde x_{t+l-1}^\perp\tilde u_{t+l-1}^\perp),\]
    \[ \Gamma_{uu}=\mathbb{\bar E}_t(\tilde u_{t+l-1}^\perp\tilde u_{t+l-1}^\perp).\]
    Based on these definitions, $\tilde P_l$ and $\tilde\theta_l$ can be expressed as:
    \[\tilde P_l=\Gamma_{xx}-\Gamma_{xz}\Gamma_{zz}^{-1}\Gamma_{zx},\quad\tilde\theta_l=\Gamma_{xz}\Gamma_{zz}^{-1},\]
    and
    \begin{align}
        &\mathbb{\bar E}_t(\tilde x_{t+l}^\perp\tilde x_{t+l}^{\perp\top})=A\Gamma_{xx}A^\top+A\Gamma_{xu}B^\top \nonumber \\
        &\qquad\qquad\qquad+B\Gamma_{ux}A^\top+B\Gamma_{uu}B^\top+Q.\nonumber
    \end{align}
    Moreover, based on the definition of $\tilde z_{t+l-1}^\perp$ and $\tilde z_{t+l}^{-\perp }$: 
    \[\tilde z_{t+l}^{-\perp }=\mathrm{col}(\tilde u_{t+l-1}^\perp,\tilde z_{t+l-1}^\perp).\]
    Hence,
    \begin{align}
        \mathbb{\bar E}(\tilde x_{t+l}^{\perp}\tilde z_{t+l}^{-\perp})=[A\Gamma_{xz}+B\Gamma_{uz}\ A\Gamma_{xu}+B\Gamma_{uu}],\label{eq:x_z_minus_recursion}
    \end{align}
    \[\mathbb{\bar E}(\tilde z_{t+l}^{-\perp}\tilde z_{t+l}^{-\perp \top})=\begin{bmatrix} \Gamma_{uu} & \Gamma_{uz} \\ \Gamma_{zu} & \Gamma_{zz}\end{bmatrix}.\]
    Next, using block matrix inversion lemma to $\mathbb{\bar E}(\tilde z_{t+l}^{-\perp}\tilde z_{t+l}^{-\perp \top})$
    \res{and after algebraic manipulation, this yields}
    \begin{align}
        \tilde P_{l+1}^-=A\tilde P_{l}A^\top+Q+f(\Sigma_{xu}-\Sigma_{xz}\Sigma_{zz}^{-1}\Sigma_{zu}),\label{eq:P_l_plus_1_simplify}
    \end{align}
    where $f(\Sigma_{xu}-\Sigma_{xz}\Sigma_{zz}^{-1}\Sigma_{zu})$ is a linear function of $\Sigma_{xu}-\Sigma_{xz}\Sigma_{zz}^{-1}\Sigma_{zu}$.

    Notice that $\Sigma_{xz}\Sigma_{zz}^{-1}=\tilde\theta_l$. Then, the term can be expanded as:
    \begin{align}
        &\Sigma_{xu}-\Sigma_{xz}\Sigma_{zz}^{-1}\Sigma_{zu}=\mathbb {\bar E}_t((\tilde x_{t+l-1}^\perp-\tilde\theta_l\tilde z_{t+l-1}^\perp)\tilde u_{t+l-1}^{\perp\top}).\nonumber
    \end{align}
    According to the definition of $\tilde\theta_l$, \res{the following equation holds:}
    \[\mathbb{\bar E}_t((\tilde x_{t+l-1}^\perp-\tilde \theta_l\tilde z_{t+l-1}^\perp)\tilde z_{t+l-1}^{\perp\top})=0.\]
    Moreover, according to the \res{dynamics of the controller in Assumption~\ref{assump:input_exciting}}, \res{it follows that} $\tilde u_{t+l-1}=\vartheta_1\phi_{t}+\vartheta_2\tilde z_{t+l-1}+\res{\tilde\nu_{t+l-1}}$, where $\vartheta_1$ and $\vartheta_2$ are \res{appropriately dimensioned matrices that depend on the controller parameters}. Then,
    \begin{align}
        &\tilde u_{t+l-1}^\perp=\tilde u_{t+l-1}-\mathbb{\bar E}_t(\tilde u_{t+l-1}\tilde\phi_{t}^\top)\mathbb{\bar E}_t(\tilde\phi_{t}\tilde\phi_{t}^\top)^{\dagger}\tilde\phi_{t}\nonumber \\
        &=\vartheta_2\tilde z_{t+l-1}^\perp+\res{\tilde \nu_{t+l-1}}.\nonumber
    \end{align}
    Hence,
    \begin{align}
        &\Sigma_{xu}-\Sigma_{xz}\Sigma_{zz}^{-1}\Sigma_{zu}\nonumber \\
        &=\mathbb{\bar E}_t((\tilde x_{t+l-1}^\perp-\tilde\theta_l\tilde z_{t+l-1}^\perp)\tilde z_{t+l-1}^{\perp\top})\vartheta_2^\top\nonumber \\
        &\res{+\mathbb{\bar E}_t ((\tilde x_{t+l-1}^{\perp}-\tilde\theta_l\tilde z_{t+l-1}^\perp)\tilde \nu_{t+l-1}^\top)}=0.\label{eq:sigma_x_u_zero}
    \end{align}
    Then, combining the result above with~\eqref{eq:P_l_plus_1_simplify}, \res{we obtain}
    \[\tilde P_{l+1}^-=A\tilde P_{l}A^\top+Q.\]

    \res{Moreover, combining the recursion in~\eqref{eq:x_z_minus_recursion} with equation~\eqref{eq:sigma_x_u_zero} yields}
    \begin{align}
        \mathbb{\bar E}_t(\tilde x_{t+l}^\perp\tilde z_{t+l}^{-\perp\top})\mathbb{\bar E}_t(\tilde z_{t+l}^{-\perp}\tilde z_{t+l}^{-\perp\top})&=\begin{bmatrix}B & A\Sigma_{xz}\Sigma_{zz}^{-1}\end{bmatrix}\nonumber \\
        &=\begin{bmatrix}B & A\tilde \theta_l\end{bmatrix}.\nonumber\qedhere
    \end{align}
\end{proof}

Hence, Lemma~\ref{lemma:kf_prediction_recursion} shows that the terms $\tilde P_l$, $\tilde P_{l+1}^-$, and $\tilde\theta_l\mathcal Q_{l-1, l}$, $\tilde\theta_{l+1}^-\mathcal Q_{l, l}$ align with a single time update step of the KF in~\eqref{eq:kf}.

Next, we introduce another lemma to address the measurement update step of the KF:
\begin{lemma}\label{lemma:kf_measurement_recursion}
    Let $\tilde K_l=\tilde P_lC^\top(C\tilde P_lC^\top+R)^{-1}$, then, the terms $\tilde P_l^-, \tilde P_{l}$ and $\tilde\theta_l^-$ and $\tilde\theta_l$ are related as follows:
    \[\tilde P_{l}=(I-\tilde K_lC)\tilde P_{l}^-,\quad\tilde\theta_{l+1}^-=\begin{bmatrix}(I-\tilde K_lC)\tilde\theta_l^- & \tilde K_l\end{bmatrix}.\]
\end{lemma}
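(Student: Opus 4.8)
The plan is to read off the two identities of Lemma~\ref{lemma:kf_measurement_recursion} as a single measurement-update step of the Kalman recursion~\eqref{eq:kf}, by interpreting $\tilde\theta_l^-,\tilde\theta_l,\tilde P_l^-,\tilde P_l$ as conditional-mean coefficients and conditional covariances of the stationary, zero-mean, jointly Gaussian vector $\mathrm{col}(\tilde\phi_t,\tilde x_{t+l-1},\tilde z_{t+l-1})$ under $\mathbb{\bar E}_t$. First I would record the Gaussian meaning of the $(\cdot)^\perp$ operation: for zero-mean jointly Gaussian data the residual $h^\perp = h-\mathbb{\bar E}_t(h\tilde\phi_t^\top)\mathbb{\bar E}_t(\tilde\phi_t\tilde\phi_t^\top)^\dagger\tilde\phi_t$ equals $h-\mathbb E[h\mid\tilde\phi_t]$ and $\mathbb{\bar E}_t(h^\perp g^{\perp\top})=\mathrm{Cov}(h,g\mid\tilde\phi_t)$. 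Since $(\cdot)^\perp$ is linear, $\tilde z_{t+l-1}^\perp = \mathrm{col}(\tilde z_{t+l-1}^{-\perp},\tilde y_{t+l-1}^\perp)$, i.e.\ enlarging the regressor set from $\tilde z_{t+l-1}^{-\perp}$ to $\tilde z_{t+l-1}^\perp$ appends exactly the new measurement block $\tilde y_{t+l-1}^\perp$ at the end; and by the orthogonal-projection (Frisch--Waugh--Lovell) characterization, $\tilde\theta_l^-\tilde z_{t+l-1}^{-\perp}$ and $\tilde\theta_l\tilde z_{t+l-1}^\perp$ are the $\mathbb{\bar E}_t$-projections of $\tilde x_{t+l-1}^\perp$ onto $\mathrm{span}(\tilde z_{t+l-1}^{-\perp})$ and $\mathrm{span}(\tilde z_{t+l-1}^\perp)$, with residual covariances $\tilde P_l^-$ and $\tilde P_l$ respectively.

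Next I would isolate the one probabilistic fact specific to this lemma: $\tilde y_{t+l-1}^\perp = C\tilde x_{t+l-1}^\perp + v_{t+l-1}$ with $v_{t+l-1}\sim\mathcal N(0,R)$ and $v_{t+l-1}$ uncorrelated with $\tilde x_{t+l-1}^\perp$, $\tilde z_{t+l-1}^{-\perp}$ and $\tilde\phi_t$. This follows from the causal structure of~\eqref{eq:linear_system} and the controller~\eqref{eq:stabilizing_controller} in Assumption~\ref{assump:input_exciting}: $\tilde x_{t+l-1}$, the inputs $\tilde u_{t:t+l-2}$, the outputs $\tilde y_{t:t+l-2}$ and $\tilde\phi_t$ are all measurable functions of $\tilde\phi_t$, $\tilde x_t$ and the noises up to time $t+l-2$, none of which involve $v_{t+l-1}$; and since $v_{t+l-1}$ is independent of $\tilde\phi_t$, the $\perp$ operation leaves it unchanged, giving $v_{t+l-1}^\perp=v_{t+l-1}$.

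With these in hand the update is standard sequential Gaussian conditioning. Writing $\hat x^- := \tilde\theta_l^-\tilde z_{t+l-1}^{-\perp}$ and $e := \tilde x_{t+l-1}^\perp-\hat x^-$ (so $\mathbb{\bar E}_t(ee^\top)=\tilde P_l^-$ and $e$ uncorrelated with $\tilde z_{t+l-1}^{-\perp}$), the innovation $\iota := \tilde y_{t+l-1}^\perp - C\hat x^- = Ce+v_{t+l-1}$ is uncorrelated with $\tilde z_{t+l-1}^{-\perp}$, has covariance $C\tilde P_l^-C^\top+R$, and satisfies $\mathbb{\bar E}_t(e\iota^\top)=\tilde P_l^-C^\top$. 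Because $(\tilde z_{t+l-1}^{-\perp},\iota)$ spans the same space as $\tilde z_{t+l-1}^\perp$ and $\iota\perp\tilde z_{t+l-1}^{-\perp}$, the projection of $\tilde x_{t+l-1}^\perp$ onto $\mathrm{span}(\tilde z_{t+l-1}^\perp)$ equals $\hat x^- + \tilde K_l\iota$ with $\tilde K_l = \tilde P_l^-C^\top(C\tilde P_l^-C^\top+R)^{-1}$ the gain of~\eqref{eq:kf}, so $\tilde\theta_l\tilde z_{t+l-1}^\perp = (I-\tilde K_lC)\tilde\theta_l^-\tilde z_{t+l-1}^{-\perp}+\tilde K_l\tilde y_{t+l-1}^\perp$; since $\mathbb{\bar E}_t(\tilde z_{t+l-1}^\perp\tilde z_{t+l-1}^{\perp\top})$ is invertible this forces $\tilde\theta_l = [(I-\tilde K_lC)\tilde\theta_l^-\ \ \tilde K_l]$. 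The corresponding residual $\tilde x_{t+l-1}^\perp-\tilde\theta_l\tilde z_{t+l-1}^\perp = (I-\tilde K_lC)e-\tilde K_lv_{t+l-1}$ has covariance $(I-\tilde K_lC)\tilde P_l^-(I-\tilde K_lC)^\top+\tilde K_lR\tilde K_l^\top$, which collapses to $(I-\tilde K_lC)\tilde P_l^-$ by the definition of $\tilde K_l$; hence $\tilde P_l = (I-\tilde K_lC)\tilde P_l^-$. Together with Lemma~\ref{lemma:kf_prediction_recursion} this closes the induction step, so $\tilde\theta_l = \hat\theta_l(\Sigma)\mathcal Q_{l-1,l}$ and $\tilde P_l = P_l(\Sigma)$ for all $l=1,\dots,T_{ini}$.

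I expect the main obstacle to be bookkeeping rather than anything conceptual: making the Frisch--Waugh--Lovell / orthogonal-projection identifications fully rigorous (in particular, checking that invertibility of $\mathbb{\bar E}_t(\tilde z_{t+l-1}^\perp\tilde z_{t+l-1}^{\perp\top})$, which underlies the definitions of $\tilde\theta_l$ and $\tilde P_l$, passes to $\mathbb{\bar E}_t(\tilde z_{t+l-1}^{-\perp}\tilde z_{t+l-1}^{-\perp\top})$ and the relevant Schur complement); keeping track of the fact that the new output enters as the last block so the coefficient ordering matches that used for $\tilde z_{t+l-1}$ and $\tilde z_{t+l-1}^-$ in the main text; and spelling out the causal dependencies precisely enough that the independence of $v_{t+l-1}$ from the entire conditioning set is beyond doubt.
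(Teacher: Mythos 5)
Your proposal is correct and follows essentially the same route as the paper: both hinge on the decomposition $\tilde z_{t+l-1}^\perp=\mathrm{col}(\tilde z_{t+l-1}^{-\perp},\tilde y_{t+l-1}^\perp)$ together with $\tilde y_{t+l-1}^\perp=C\tilde x_{t+l-1}^\perp+v_{t+l-1}$ and the uncorrelatedness of $v_{t+l-1}$ with the conditioning set, and your innovations/sequential-projection argument is just the probabilistic reading of the block-matrix-inversion computation the paper carries out explicitly (your version also makes transparent why the Schur complement is $C\tilde P_l^-C^\top+R$, which the paper's notation $F=(C\Gamma_{xx}C^\top+R)^{-1}$ obscures). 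Note that, like the paper's own proof, you in fact establish $\tilde P_l=(I-\tilde K_lC)\tilde P_l^-$ and $\tilde\theta_l=\bigl[(I-\tilde K_lC)\tilde\theta_l^-\ \ \tilde K_l\bigr]$ with $\tilde K_l=\tilde P_l^-C^\top(C\tilde P_l^-C^\top+R)^{-1}$, i.e.\ the intended measurement-update form rather than the $\tilde\theta_{l+1}^-$ and $\tilde P_l$-based gain appearing in the lemma as stated; also, since every step you use is a second-moment identity, the Gaussian-conditioning framing is dispensable, which is what the paper later relies on for the non-Gaussian extension.
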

\begin{proof}
    We first consider the relationship between $\tilde P_{l}^-$ and $\tilde P_l$. For simplicity of \res{notation}, let
    \[\Gamma_{xx}=\mathbb{\bar E}_t(\tilde x_{t+l-1}^\perp\tilde x_{t+l-1}^{\perp\top}), \Gamma_{xz}=\Gamma_{zx}^\top=\mathbb{\bar E}_t(\tilde x_{t+l-1}^\perp\tilde z_{t+l-1}^{-\perp\top}),\]
    \[\Gamma_{zz}=\mathbb{\bar E}_t(\tilde z_{t+l-1}^{-\perp}\tilde z_{t+l-1}^{-\perp\top}), \Gamma_{xu}=\Gamma_{ux}^\top=\mathbb{\bar E}_t(\tilde x_{t+l-1}^\perp\tilde u_{t+l-1}^{\perp\top}),\]
    \[\Gamma_{uu}=\mathbb{\bar E}_t(\tilde u_{t+l-1}^\perp\tilde u_{t+l-1}^{\perp\top}).\]
    Then,
    \[\tilde P_l^-=\Gamma_{xx}-\Gamma_{xz}\Gamma_{zz}^{-1}\Gamma_{zx},\quad \tilde\theta_l^-=\Gamma_{xz}\Gamma_{zz}^{-1}.\]

    Notice that $\tilde z_{t+l-1}^\perp=\mathrm{col}(\tilde z_{t+l-1}^{-\perp}, \tilde y_{t+l-1}^\perp)$, \res{it follows that}
    \[\mathbb{\bar E}_t(\tilde x_{t+l-1}^\perp\tilde z_{t+l-1}^{\perp\top})=\begin{bmatrix}\Gamma_{xz} & \Gamma_{xx}C^\top\end{bmatrix},\]
    \[\mathbb{\bar E}_t(\tilde z_{t+l-1}^{\perp}\tilde z_{t+l-1}^{\perp\top})=\begin{bmatrix}\Gamma_{zz} & \Gamma_{zx}C^\top \\ C\Gamma_{xz} & C\Gamma_{xx}C^\top+R\end{bmatrix}.\]
    Then, using block matrix inversion lemma to $\mathbb{\bar E}_t(\tilde z_{t+l-1}^{\perp}\tilde z_{t+l-1}^{\perp\top})$ and \res{by routine calculations, we arrive at}
    \[\tilde P_{l}=\tilde P_l^--\tilde P_{l}^-C^\top FC\tilde P_{l}^-,\]
    where $F=(C\Gamma_{xx}C^\top+R)^{-1}$.
    Similarly, \res{it follows that}
    \[\tilde\theta_{l}=\begin{bmatrix}\tilde\theta_{l}^--\tilde P_{l}^-C^\top F\tilde\theta_{l}^- & \tilde P_{l}^-C^\top F\end{bmatrix},\]
    Let $\tilde K_{l}=\tilde P_{l}^-C^\top F=\tilde P_{l}^-C^\top (C\tilde P_{l}^-C^\top+R)^{-1}$. \res{Substituting this expression yields the desired identity and completes the proof.}
\end{proof}
From Lemma~\ref{lemma:kf_measurement_recursion}, \res{it can be deduced that} the relationships between $\tilde P_l$, $\tilde P_l^-$, and $\tilde \theta_l\mathcal Q_{l-1, l}$, $\tilde \theta_l^-\mathcal Q_{l-1, l-1}$ correspond to the measurement update step of the Kalman Filter.

Hence, by mathematical induction, we have established that:
\[\tilde\theta_l=\hat\theta_{l}(\Sigma)\mathcal Q_{l-1, l},\quad \tilde P_l=P_l(\Sigma),l=1, \cdots, T_{ini}.\]
By applying Lemma~\ref{lemma:kf_prediction_recursion}, \res{it can also be shown that:}
\[\tilde\theta_l^-=\hat\theta_{l}^-(\Sigma)\mathcal Q_{l-1, l-1}, \tilde P_l^-=P_l^-(\Sigma),l=2, \cdots, T_{ini}+1.\]
Finally, let us examine the relationship between $\tilde P_{T_{ini}+1}^-$ and the asymptotic covariance $\mathcal{\tilde Y}_{f, \infty}$ in Lemma~\ref{lemma:mean_variance_convergence}, as well as $\tilde\theta_{T_{ini}+1}^-$ and $\tilde\eta_{f, \infty}$. Define:
\[\tilde z_{t+T_{ini}+T}^{\perp-}=\mathrm{col}(\mathcal Q_{T_{ini}-1, T_{ini}}\tilde z_{t+T_{ini}}^\perp, \tilde u_f^\perp),\]
where $\tilde u_f=\mathrm{col}(\tilde u_{t+T_{ini}}, \cdots, \tilde u_{t+T_{ini}+T-1})$.
Additionally, \res{it follows that:}
\[\tilde y_f^\perp=O_f\tilde x_{t+T_{ini}+1}^\perp+H_f\tilde u_f^\perp+G_f\tilde w_f+\tilde v_f,\]
    where $\tilde w_f=\mathrm{col}(\tilde w_{t+T_{ini}-1}, \cdots, \tilde w_{t+T_{ini}+T-1}), \tilde v_f=\mathrm{col}(\tilde v_{t+T_{ini}}, \cdots, \tilde v_{t+T_{ini}+T})$. Using a similar technique as in the proof of Lemma~\ref{lemma:kf_prediction_recursion}, \res{we obtain:}
\begin{align}
    \mathcal{\tilde Y}_{f, \infty}&=O_f\tilde P_{T_{ini}+1}^-O_f^\top+G_fQ_fG_f^\top+R_f \nonumber \\
    &=O_fP_{T_{ini}+1}^-(\Sigma)O_f^\top+G_fQ_fG_f^\top+R_f,\nonumber \\
    \tilde\eta_{f, \infty}&=[O_f\mathcal Q_{T_{ini}-1, T_{ini}}\tilde\theta_{T_{ini}+1}^- H_f]=[O_f\hat\theta_{T_{ini}+1}^-(\Sigma)\ H_f].\nonumber
\end{align}

Thus, the statements of Theorem~\ref{thm:convergence_N_multiple} have been fully proven. Moreover, since the proof of Theorem~\ref{thm:convergence_N_multiple} and the proof of Theorem~\ref{thm:offline_asymptotic_Tini} in Appendix~\ref{append:offline_asymptotic_Tini} does not rely on any Gaussian assumptions, Proposition~\ref{cor:kf_nonGaussian} can also be \res{established} using the same proofs.

\section{Proof of Theorem~\ref{thm:offline_asymptotic_Tini}}\label{append:offline_asymptotic_Tini}
\begin{proof}
    The gap between the two distributions considered in Theorem~\ref{thm:offline_asymptotic_Tini} is equivalent to the gap between the following two distributions:

\begin{itemize}
    \item The estimation of \( x_{T_{ini}+1} \) using \( T_{ini} \) initial samples, as discussed in Theorem~\ref{thm:convergence_N_multiple}. This estimation is the output of a KF that takes the past \( T_{ini} \) samples as inputs, initialized with \( (0, \Sigma) \).
    \item The estimation of \( x_{T_{ini}+1} \) using all available past samples, as described in Proposition~\ref{prop:optimal_distribution}. This estimation is the output of a KF that takes all past samples as inputs, initialized with \( (\mu_{\mathcal T}, \Sigma_{\mathcal T}) \).
\end{itemize}

For both KFs, we consider the time interval from \( t = 1 \) to \( t = T_{ini} \). During this interval, both filters share the same state-space parameters and input-output samples, with the only difference being the initial distribution of \( x_1 \), \res{which is} \( (0, \Sigma) \) for the first filter and \( (\hat{x}_1^-, P_1^-) \) for the second.

Hence, as \( T_{ini} \to \infty \), applying Lemma~\ref{lemma:kf_initial_state} proves the theorem.
\end{proof}

\begin{lemma}[Stability of KF with respect to its initialization]\label{lemma:kf_initial_state}
    For the LTI system~\eqref{eq:linear_system}, suppose $(A, C)$ is detectable and $(A, Q^{1/2})$ is stabilizable. Let $\hat x_{t, 11}^-, P_{t, 1}^-$ and $\hat x_{t, 22}^-, P_{t, 2}^-$ denote the output of the Kalman filter recursively updated by the same I/O samples, but initialized by two different conditions $(\mu_1, \Sigma_1)$ and $(\mu_2, \Sigma_2)$ as in~\eqref{eq:kf}. Then, there exists $0<\rho<1$, such that
    \[\lim_{t\to\infty}\|P_{t, 1}^--P_{t, 2}^-\|\rho^{-t}=0,\quad \lim_{t\to\infty}\|\hat x_{t, 11}^--\hat x_{t, 22}^-\|\rho^{-t}=0\ a.s.\]
\end{lemma}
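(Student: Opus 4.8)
The plan is to prove the covariance statement and the mean statement separately. The covariance statement reduces to the classical convergence theory of the discrete Riccati difference equation and is in fact deterministic; the mean statement is then handled by analyzing a forced linear recursion whose forcing term turns out to be exponentially small.

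For the covariance part, I would first note that the predicted covariances $P_{t,i}^-$ satisfy the Riccati difference equation $P_{t+1,i}^- = \mathcal R(P_{t,i}^-)$ with $\mathcal R(P) = APA^\top + Q - APC^\top(CPC^\top+R)^{-1}CPA^\top$, a recursion that does not involve the I/O data. Writing $L(P) = APC^\top(CPC^\top+R)^{-1}$ for the predictor gain and $\bar A(P) = A - L(P)C$ for the predictor closed-loop matrix, a short computation --- using the two representations $\mathcal R(P) = \bar A(P)PA^\top + Q = AP\bar A(P)^\top + Q$ together with the identity $\bar A(P)PC^\top = L(P)R$ --- yields the algebraic identity
\[
\mathcal R(P_1) - \mathcal R(P_2) \;=\; \bar A(P_1)\,(P_1 - P_2)\,\bar A(P_2)^\top .
\]
Iterating gives $P_{t,1}^- - P_{t,2}^- = \big(\prod_{s=1}^{t-1}\bar A(P_{s,1}^-)\big)(P_{1,1}^- - P_{1,2}^-)\big(\prod_{s=1}^{t-1}\bar A(P_{s,2}^-)\big)^\top$ as ordered products. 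Since $(A,C)$ is detectable and $(A,Q^{1/2})$ is stabilizable, the classical DARE convergence theorem~\cite{anderson2005optimal} implies $P_{t,i}^- \to P_\infty$, the unique stabilizing solution of the DARE, and $\bar A(P_\infty)$ is Schur stable; hence each of the ordered products decays geometrically, and therefore $\|P_{t,1}^- - P_{t,2}^-\| \le c\,\rho^{t}$ for some $c>0$ and some $\rho\in(0,1)$ --- indeed the bound is pathwise, without any probabilistic input.

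For the mean part, I would write the Kalman recursion in one-step predictor form, $\hat x_{t+1,i}^- = (A - L_{t,i}C)\hat x_{t,i}^- + Bu_t + L_{t,i}y_t$ with $L_{t,i} = L(P_{t,i}^-)$, and subtract to get, for $\delta_t := \hat x_{t,1}^- - \hat x_{t,2}^-$,
\[
\delta_{t+1} \;=\; (A - L_{t,1}C)\,\delta_t \;+\; (L_{t,1} - L_{t,2})\,\big(y_t - C\hat x_{t,2}^-\big).
\]
By the covariance part and smoothness of $L(\cdot)$ on $\{P\succeq 0\}$, the gain mismatch $L_{t,1}-L_{t,2}$ is geometrically small, while $A - L_{t,1}C \to A - L(P_\infty)C$, which is Schur, so the homogeneous transition matrices $\prod_{s}(A - L_{s,1}C)$ are geometrically bounded uniformly in the starting index. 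The forcing term contains the innovation of the sub-optimal filter $2$, $y_t - C\hat x_{t,2}^- = C(x_t - \hat x_{t,2}^-) + v_t$, whose error part obeys the eventually Schur recursion $x_{t+1} - \hat x_{t+1,2}^- = (A - L_{t,2}C)(x_t - \hat x_{t,2}^-) + w_t - L_{t,2}v_t$ driven by i.i.d. noise with finite moments of all orders; a Borel--Cantelli argument then gives $\|y_t - C\hat x_{t,2}^-\| = o(t^\epsilon)$ almost surely for every $\epsilon>0$. Unrolling the recursion for $\delta_t$ and bounding the resulting discrete convolution of a geometric kernel against a geometric-times-subpolynomial input shows $\|\delta_t\|\,\rho^{-t}\to 0$ almost surely for any $\rho<1$ exceeding the spectral radius of $A - L(P_\infty)C$; since this rate also dominates the covariance rate, the same $\rho$ serves both claims.

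The step I expect to be the main obstacle is the mean part, precisely because the two filters use \emph{different} time-varying gains, so $\delta_t$ does not solve a homogeneous recursion: one is forced to control the innovation of the sub-optimal filter, a genuinely random quantity rather than a bounded deterministic one. The resolution exploited above is that this innovation appears only multiplied by the gain mismatch $L_{t,1}-L_{t,2}$, which vanishes geometrically, so the geometric contraction of the homogeneous part dominates the subexponential growth of the innovation. A subsidiary technical point is the uniform-in-index geometric bound on the products $\prod_{s=r}^{t}\bar A(P_{s,i}^-)$ and $\prod_{s=r}^{t}(A - L_{s,i}C)$: this follows by fixing a submultiplicative norm in which $A - L(P_\infty)C$ has norm below the target rate, observing that all but finitely many factors then also do, and absorbing the finitely many early factors into the constant.
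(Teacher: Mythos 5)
Your proposal is correct and follows the same overall architecture as the paper's proof --- covariance difference first, then gain difference, then the forced recursion for the mean difference --- but both halves are executed with different tactics. For the covariance part, the paper simply cites the classical result that the Riccati iterates from \emph{any} psd initialization converge exponentially to the stabilizing DARE solution $P^*$ and applies the triangle inequality, whereas you derive the decay self-containedly from the exact factorization $\mathcal R(P_1)-\mathcal R(P_2)=\bar A(P_1)(P_1-P_2)\bar A(P_2)^\top$ and the geometric decay of the ordered products of the closed-loop matrices; your identity checks out (it follows from $\mathcal R(P)=\bar A(P)PA^\top+Q=AP\bar A(P)^\top+Q$ and $\bar A(P)PC^\top=L(P)R$, exactly as you indicate), and this route is arguably cleaner since it avoids importing the citation and makes the pathwise/deterministic nature of this half explicit. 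For the mean part, the paper sets up the same recursion $\chi_t=(A-AK_{t-1,1}C)\chi_{t-1}+A(K_{t-1,1}-K_{t-1,2})\epsilon_{t-1}$ but closes it with a common Lyapunov function $V(\chi)=\chi^\top P^*\chi$ and a case split on whether $\|\chi_t\|\geq L\rho^t$, while you unroll by variation of constants and bound the convolution of a geometric kernel against a geometric-times-subpolynomial forcing. Notably, the paper asserts $|\Gamma_{t+1}|\leq L_2\|\chi_t\|\rho^t+L_3\rho^{2t}$ almost surely without justifying why the innovation $\epsilon_t=y_t-C\hat x_{t,22}^-$ is a.s. subexponential; your Borel--Cantelli argument for $\|\epsilon_t\|=o(t^{\epsilon})$ supplies exactly the missing justification, so on this point your write-up is more complete than the paper's. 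The uniform-in-starting-index geometric bound on the ordered products, which you flag and resolve via a norm in which $A-L(P_\infty)C$ is contractive, is the same device the paper implicitly relies on.
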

\begin{proof}
   First, due to the assumptions in the lemma, we know from~\cite[Chapter 4]{anderson2005optimal},~\cite{kf_initial_condition} \res{that for any initialization of the covariance in the Kalman filter, there exists a constant $\Phi>0$, such that}
    \[\|P_{t}^--P^*\|\leq \Phi\underline\rho^t,\]
    where
    \(\underline\rho=\rho(A-AK^*C)<1,\)
    \res{$\rho(X)$ denotes the spectral radius of the matrix $X$} and $P^*$ is the unique positive definite solution to the Riccati equation:
    \[P^*=A P^*A^\top-A P^*C^\top(C P^*C^\top+R)^{-1}CP^*A^\top +Q.\]
    Hence, we can conclude that for any $\underline\rho< \rho<1$,
    \[\lim_{t\to\infty}\|P_{t, 1}^--P^*\|\rho^{-t}=0, \lim_{t\to\infty}\|P_{t, 2}^--P^*\|\rho^{-t}=0.\]
    Therefore,
    \begin{align}\label{eq:P_diff_convergence}
        &0\leq\limsup_{t\to\infty}\|P_{t, 1}^--P_{t, 2}^-\|\rho^{-t} \nonumber \\
        &\leq \lim_{t\to\infty}\|P_{t, 1}^--P^*\|\rho^{-t}+\lim_{t\to\infty}\|P_{t, 2}^--P^*\|\rho^{-t}=0.
    \end{align}
    We next consider the second equation in the lemma. Let the corresponding Kalman gain of $P_{t, 1}^-$ and $P_{t, 2}^-$ be $K_{t, 1}$ and $K_{t, 2}$ respectively. Since
    \[K_{t, *}=P_{t, *}^- C^\top(CP_{t, *}^-C^\top+R)^{-1},\]
    where $*$ can be either $1$ or $2$, we bound the difference between $K_{t, 1}$ and $K_{t, 2}$ as:
    \begin{align}
        &K_{t, 1}-K_{t, 2} \nonumber \\
        &=P_{t, 1}^-C^\top(CP_{t, 1}^-C^\top+R)^{-1}-P_{t, 2}^-C^\top(CP_{t, 1}^-C^\top+R)^{-1}\nonumber \\
        &+P_{t, 2}^-C^\top(CP_{t, 1}^-C^\top+R)^{-1}-P_{t, 2}^-C^\top(CP_{t, 2}^-C^\top+R)^{-1}.\label{eq:K_diff}
    \end{align}
    The difference between the first two terms in the equation above are bounded by the difference between $P_{t, 1}^-$ and $P_{t, 2}^-$. It remains to bound the difference between the last two terms. \res{
    Let 
    $\displaystyle S_{1,t}=CP_{t,1}^-C^\top+R$, 
    $\displaystyle S_{2,t}=CP_{t,2}^-C^\top+R$, 
    and 
    $\displaystyle E_t=C(P_{t,1}^- - P_{t,2}^-)C^\top$.
    Then
    \[
    \|S_{1,t}^{-1}-S_{2,t}^{-1}\|
    \le \|S_{1,t}^{-1}\|\,\|S_{2,t}^{-1}\|\|E_t\|.
    \]
    Since $R\succ0$, both $S_{1,t}$ and $S_{2,t}$ are uniformly positive definite, and thus
    \(\|S_{1,t}^{-1}\|\|S_{2,t}^{-1}\|\le \tilde\Phi\)
    for some constant $\tilde\Phi>0$.
    Hence, }combining the boundedness above with~\eqref{eq:P_diff_convergence} and~\eqref{eq:K_diff}, for all $\underline\rho<\rho<1$,
    \begin{equation}\label{eq:K_12_converge}
        \lim_{t\to\infty}\|K_{t, 1}-K_{t, 2}\|\rho^{-t}=0.
    \end{equation}
    
    Now, we are ready to consider the difference between $\hat x_{t, 11}^-$ and $\hat x_{t, 22}^-$, which can be \res{derived} as:
    \begin{align}
        &\hat x_{t, 11}^--\hat x_{t, 22}^-=(A-AK_{t-1, 1}C)\hat x_{t-1, 11}^-\nonumber \\
        &+AK_{t-1, 1}y_{t-1}-(A-AK_{t-1, 2}C)\hat x_{t-1, 22}^--AK_{t-1, 2}y_{t-1}.\nonumber
    \end{align}
    Let $\chi_t=\hat x_{t, 11}^--\hat x_{t, 22}^-$ and let $\tilde K_t=K_{t, 1}-K_{t, 2}$. \res{It follows that}
    \begin{align}
        &\chi_t=\hat x_{t, 11}^--\hat x_{t, 22}^-\nonumber \\
        &=\underbrace{(A-AK_{t-1, 1}C)}_{\tilde A_{t-1}}\chi_{t-1}+A\tilde K_{t-1}\underbrace{(y_{t-1}-C\hat x_{t-1, 22}^-)}_{\epsilon_{t-1}}.\label{eq:chi_t_recursion}
    \end{align}
    For a sufficiently large $t$, consider the following common Lyapunov function:
    \begin{align}
        V(\chi_t)=\chi_t^\top P^*\chi_t.\nonumber
    \end{align}
    Then, using the recursion in~\eqref{eq:chi_t_recursion},
    \begin{align}
        V(\chi_{t+1})=\chi_{t}^\top\tilde A_{t}^\top P^*\tilde A_{t}\chi_{t}+\Gamma_{t+1},\nonumber
    \end{align}
    where
    \begin{align}
        &\Gamma_{t+1}=\chi_{t}^\top\tilde A_{t}^\top P^*A\tilde K_{t}\epsilon_{t}+\epsilon_{t}^\top \tilde K_{t}^\top A^\top P^*\tilde A_{t}\chi_{t}\nonumber \\
        &\qquad\qquad\qquad+\epsilon_{t}^\top\tilde K_{t}^\top A^\top P^* A\tilde K_{t}\epsilon_{t}.\label{eq:Gamma_def}
    \end{align}
    Hence, let $\tilde A=(A-AK^*C)$,
    \begin{align}
        &V(\chi_{t+1})-V(\chi_{t})=\chi_{t}^\top(\tilde A^\top P^*\tilde A-P^*)\chi_{t}\nonumber \\
        &\qquad\qquad+\chi_{t}^\top(\tilde A_{t}^\top P^*\tilde A_{t}-\tilde A^\top P^*\tilde A)\chi_{t}+\Gamma_{t+1}.\nonumber
    \end{align}
    Notice that $P^*$ satisfies:
    \[P^*=\tilde A^\top P^*\tilde A+A^\top K^{*\top}RK^*A+Q,\]
    \res{it can be deduced that}
    \begin{align}
        &V(\chi_{t+1})-V(\chi_{t})=-\chi_{t}^\top(A^\top K^{*\top}RK^*A+Q)\chi_{t}\nonumber \\
        &+\chi_{t}^\top(\tilde A_{t}^\top P^*\tilde A_{t}-\tilde A^\top P^*\tilde A)\chi_{t}+\Gamma_{t+1}.\label{eq:V_diff}
    \end{align}
    According to~\eqref{eq:K_12_converge}, there exists constants $L_1, L_2, L_3>0$, such that
    \[\|\tilde A_{t}^\top P^*\tilde A_{t}-\tilde A^\top P^*\tilde A\|\leq L_1\rho^t,\]
    \[|\Gamma_{t+1}|\leq L_2\|\chi_{t}\|\rho^t+L_3\rho^{2t}\ a.s.,\]
    for all $\underline\rho<\rho<1$. Let $\eta=\lambda_{\min}(A^\top K^{*\top}RK^*A+Q)>0$. Then, for a fixed $\rho\in(\underline\rho, 1)$, there exists constants $L>0, \underline\eta>0$ and a time index $T_1>0$, such that for all $t\geq T_1$,
    \[\eta-L_1\rho^t-\frac{L_2L+L_3}{L^2}\geq \underline\eta>0.\]
    Suppose there exits $t\geq T_1$, such that $\|\chi_{t}\|\geq L\rho^t$. Otherwise, the proposition is already proved. Then,
    \begin{align}
        &V(\chi_{t+1})-V(\chi_{t})\leq -\|\chi_{t}\|^2(\eta-L_1\rho^t)+|\Gamma_{t+1}|\nonumber \\
        &\leq -\|\chi_{t}\|^2(\eta-L_1\rho^t)+(\frac{L_2L+L_3}{L^2})\|\chi_{t}\|^2\nonumber \\
        &\leq -\underline\eta\|\chi_{t}\|^2\leq -\frac{\underline\eta}{\|P^*\|}V(\chi_{t})\ a.s.
    \end{align}
    Moreover, we can verify through the definition of $\underline\eta$ that $0<\underline\eta/\|P^*\|<1$. Hence, we can prove the exponentially convergence of $\|\chi_t\|$ to $0$ for all $t\geq T_1$, as long as $\|\chi_t\|\geq L\rho^t$. Then, we can conclude that there exists constants $\rho<1, L_4>0$, such that for all $t\geq T_1$,
    \begin{align}
        \|\chi_t\|\leq L_4\rho^t.\label{eq:chi_t_bound}
    \end{align}
    Hence, we can prove the second equation in the lemma.
\end{proof}

\begin{rese}
\section{Proof of Remark~\ref{rem:xi_full_rank}}\label{appendix:xi_full_rank}
\begin{theorem}\label{thm:Xi_full_row_rank}
Suppose the assumptions in 
Section~\ref{sec:problem_formulation} hold, and let 
\[
r \;\triangleq\; n_{\mathrm{ctrl}} + m(T_{\mathrm{ini}}+T) + pT_{\mathrm{ini}}
\]
denote the number of rows of $\Xi^{(N)}$.  
If $N\geq r(T_{ini}+T+\kappa_{\mathrm{ctrl}})+1$, then $\Xi^{(N)}$ has full row rank almost surely, i.e.,
\[
\mathbb{P}\bigl( \mathrm{rank}\,\Xi^{(N)} = r \bigr) = 1,
\]
whenever the noise process satisfies one of the following conditions:
\begin{enumerate}
    \item[\textnormal{(i)}] the Gaussian noise model in Section~\ref{sec:problem_formulation}; 
    \item[\textnormal{(ii)}] the non-Gaussian model in Assumption~\ref{assump:nonGaussian}.
\end{enumerate}
\end{theorem}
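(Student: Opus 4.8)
The plan is to reduce the statement to showing $\mathrm{rank}\,\Xi^{(N)}=r$ almost surely, and to do this in three steps: a structural reduction of each column of $\Xi^{(N)}$, the construction of one noise realization achieving full rank, and a genericity argument upgrading that to an almost-sure statement. For the structural reduction, I would first note that every column of $\Xi^{(N)}$ is a vector $\vartheta=\mathrm{col}(\phi_{\mathrm{start}},u_{1:T_{ini}+T},y_{1:T_{ini}})$ of dimension exactly $r=n_{\mathrm{ctrl}}+m(T_{ini}+T)+pT_{ini}$, and that the ``free'' noise block $\zeta=\mathrm{col}(\phi_1,\{\nu_t\}_{t=1}^{T_{ini}+T},\{v_t\}_{t=1}^{T_{ini}})$ has the same dimension $r$. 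Conditioning on the remaining ``degenerate'' noise components (the plant initial state, the process noises $w_t$, and the measurement noises $v_t$ with $t>T_{ini}$ that enter the column only through the closed loop), I would show that the map $\zeta\mapsto\vartheta$ is an invertible affine map: unrolling $\phi_{t+1}=A_{\mathrm{ctrl}}\phi_t+B_{\mathrm{ctrl}}y_t$, $u_t=C_{\mathrm{ctrl}}\phi_t+\nu_t$, $x_{t+1}=Ax_t+Bu_t+w_t$, $y_t=Cx_t+v_t$ in the interleaved order $(\phi_1;u_1,y_1;u_2,y_2;\dots;u_{T_{ini}},y_{T_{ini}};u_{T_{ini}+1};\dots;u_{T_{ini}+T})$ versus $(\phi_1;\nu_1,v_1;\dots;\nu_{T_{ini}+T})$, each newly revealed block of $\vartheta$ equals the corresponding fresh block of $\zeta$ plus a function of the earlier ones. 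Hence the linear part is unit lower triangular, so the law of $\vartheta$ is the image of the law of $\zeta$ under an invertible affine map.

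The second step is to exhibit one noise sequence under which $\Xi^{(N)}$ has rank $r$. I would treat the cascade of the plant~\eqref{eq:linear_system} and the excitation controller~\eqref{eq:stabilizing_controller} as a single augmented LTI system with state $(x_t,\phi_t)$, driven by the signal $\nu_t$ and the controller initialization $\phi_1$, and apply the deterministic fundamental lemma (Proposition~\ref{prop:fundamental_lemma}) to this augmented system. Choosing $\{\nu_t\}$ persistently exciting of sufficiently high order and choosing $\phi_1$ in the interior of its support — both admissible because these values lie in the support of the corresponding noises under Assumption~\ref{assump:input_exciting} (Gaussian case) or Assumption~\ref{assump:nonGaussian} — and invoking controllability/observability of the minimal realizations of plant and controller together with the controllability index $\kappa_{\mathrm{ctrl}}$, one gets that the stacked matrix $\mathrm{col}(\Phi^{(N)},U_p^{(N)},Y_p^{(N)},U_f^{(N)})$ has full row rank for this realization. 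The bound $N\ge r(T_{ini}+T+\kappa_{\mathrm{ctrl}})+1$ is precisely the trajectory length needed to realize the required order of persistency of excitation in the single-trajectory (Hankel) construction, and is more than enough in the multi-trajectory (i.i.d.) construction.

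The third step turns ``full rank for one realization'' into ``full rank almost surely.'' The $r\times r$ minors of $\Xi^{(N)}$ are polynomials in the underlying noise realization, so the rank-deficiency event is the common zero set of these polynomials, i.e. $\{q=0\}$ for the single nonzero polynomial $q=\sum(\det(\cdot))^2$ — nonzero by Step~2 — hence a proper algebraic subset of noise space. Under the Gaussian model the noise has a density with respect to Lebesgue measure, and proper algebraic sets are Lebesgue-null, so the event has probability zero. Under the non-Gaussian model I would condition on the degenerate components as in Step~1, noting that $q$ viewed as a polynomial in the free block $\zeta$ alone remains nonzero for almost every value of the conditioned components (again by Step~2); since the law of $\zeta$ factors into the (nondegenerate) laws of $\phi_1$ and the i.i.d. $\{\nu_t\},\{v_t\}$, it assigns zero mass to every proper algebraic subset — equivalently, in the i.i.d.-column case, the span of $\vartheta^{(1)},\dots,\vartheta^{(k)}$ almost surely gains a dimension at each step until it fills $\mathbb{R}^{r}$, which is reached by step $r\le N$. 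Composing with the invertible affine maps of Step~1 preserves all of this, giving $\mathrm{rank}\,\Xi^{(N)}=r$ almost surely.

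The main obstacle is Step~2: proving the ``extended fundamental lemma'' for the plant--controller cascade — namely that a persistently exciting $\{\nu_t\}$ plus a generic controller initialization forces the stacked matrix (data rows together with the $\Phi^{(N)}$ rows) to have full row rank — and pinning down the exact order of excitation, which is what produces the $\kappa_{\mathrm{ctrl}}$-dependent bound on $N$. A secondary point of care is Step~3 in the non-Gaussian setting, where the argument genuinely uses that the free noise charges no proper affine subspace (not merely that its support has nonempty interior), which is the role of the nondegeneracy hypothesis on $(\phi_1,\nu_0,v_0)$.
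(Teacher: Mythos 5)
Your Steps~1 and~3 track the paper's argument closely: the paper likewise writes each column $\mathbf c^{(s)}$ of $\Xi^{(N)}$ as an independent offset plus an invertible block-lower-triangular map applied to the ``fresh'' noise block $\mathrm{col}(\phi,\nu_{1:T_{ini}+T},v_{1:T_{ini}})$, and likewise finishes with an algebraic-variety/absolute-continuity argument (your observation that nonempty interior of the support alone is not quite enough in the non-Gaussian case is a fair one, and your per-column ``span gains a dimension'' variant for the i.i.d.\ case is fine). The problem is Step~2, which you yourself flag as the main obstacle and which, as proposed, would fail.

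Applying the deterministic Willems lemma to the noise-free plant--controller cascade driven by $\nu_t$ and $\phi_1$ cannot certify $\mathrm{rank}\,\Xi^{(N)}=r$. In the absence of measurement noise, each $y_t$ is a deterministic function of the augmented initial state and $u_{1:t}$, so the rows of $Y_p^{(N)}$ lie in the row span of the augmented initial-state matrix together with $U_p^{(N)}$; hence
\[
\mathrm{rank}\,\mathrm{col}\bigl(\Phi^{(N)},U_p^{(N)},Y_p^{(N)},U_f^{(N)}\bigr)\;\le\;(n+n_{\mathrm{ctrl}})+m(T_{ini}+T)\;<\;r
\]
whenever $pT_{ini}>n$, no matter how persistently exciting $\nu$ is. The $pT_{ini}$ independent directions contributed by $Y_p^{(N)}$ exist only because of the measurement noise $v_{1:T_{ini}}$ in each column, which your Step~2 omits from the driving signals. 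The paper's mechanism is different: it selects $r$ columns with start times spaced $T_{ini}+T+\kappa_{\mathrm{ctrl}}$ apart and shows the stacked $r\times r$ submatrix is an offset plus an invertible block-lower-triangular map $\mathsf T_{\mathrm{big}}$ applied to fresh noise, where the diagonal block for the $\phi_{t_k}$-rows of each later column is $\mathcal C_{\mathrm{ctrl}}T_y^{(s)}$ restricted to $n_{\mathrm{ctrl}}$ coordinates --- full rank because $\kappa_{\mathrm{ctrl}}$ steps of measurement noise, pushed through the controller's controllability matrix, can steer $\phi$ freely. This is also where the bound $N\ge r(T_{ini}+T+\kappa_{\mathrm{ctrl}})+1$ actually comes from (it is the start time of the $r$-th selected column), not from a persistency-of-excitation order as you suggest. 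To repair your proof you would need to promote $(v_t)$ (and $\phi_1$, $\nu_t$) jointly to the role of exciting inputs and establish the resulting triangular structure --- at which point you have essentially reconstructed the paper's Parts~I--III rather than invoked Proposition~\ref{prop:fundamental_lemma}.
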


\begin{proof}
	We first consider the case where the system noise is Gaussian and the trajectory library is constructed from a single trajectory. Recall that we assume that $v_t\sim\mathcal N(0,R)$ and $\nu_t\sim\mathcal N(0,R_{\mathrm{ctrl}})$ with $R\succ0$, $R_{\mathrm{ctrl}}\succ0$, mutually independent and independent of initial conditions and plant process noise.
	The controller $(A_{\mathrm{ctrl}},B_{\mathrm{ctrl}},C_{\mathrm{ctrl}})$ is a minimal realization;
	let $n_{\mathrm{ctrl}}$ be its state dimension and $\kappa_{\mathrm{ctrl}}$ the controllability index of $(A_{\mathrm{ctrl}},B_{\mathrm{ctrl}})$.
	Recall that $K=T_{ini}+T+N-1$ denotes the length of the single trajectory in $\mathcal D$. 
		
	In this proof, we aim to show that $\Xi^{(N)}$ has full row rank almost surely. 
	Since its row dimension is $r$, it suffices to show that one can extract $r$ linearly independent columns. 
	Our strategy is as follows: 
	Part~I develops a universal triangular factorization for fixed \((u,y)\)-windows.
	Part~II extends this factorization by propagating the noise terms through the controller, thereby obtaining a decomposition of the combined vector formed by \((\phi, u, y)\), which constitutes columns of $\Xi^{(N)}$.
	Part~III selects \(r\) columns of $\Xi^{(N)}$ and concatenates them into a global block-lower-triangular map to establish that the selected submatrix is nonsingular.
	Finally, Part~IV concludes almost-sure full row rank of \(\Xi^{(N)}\) via an algebraic-variety and absolute-continuity argument.
		
	Define
	\[
	\tilde x_t=\mathrm{col}(x_t,y_t,\phi_t,u_t),\qquad
	\tilde y_t=\mathrm{col}(u_t,y_t),
	\]
	and the augmented system
	\begin{align*}
	\tilde x_{t+1}&=\tilde A\tilde x_t+\tilde w_t+\tilde G\tilde v_t,\\
	\tilde y_t&=\tilde C\tilde x_t,
	\end{align*}
	where
	\[
	\tilde w_t=\begin{bmatrix}w_t\\ Cw_t\\ 0\\ 0\end{bmatrix},\quad
	\tilde v_t=\begin{bmatrix}\nu_t\\ v_t\end{bmatrix},\quad
	\tilde C=\begin{bmatrix}0&0&0&I\\ 0&I&0&0\end{bmatrix},
	\]
	\[
	\tilde G=\begin{bmatrix}0&0\\ 0&I\\ 0&0\\ I&0\end{bmatrix},\ \tilde A=\begin{bmatrix}
	A & 0 & 0 & B \\
	CA & 0 & 0 & CB \\
	0 & B_{\mathrm{ctrl}} & A_{\mathrm{ctrl}} & 0 \\
	0 & C_{\mathrm{ctrl}}B_{\mathrm{ctrl}} & C_{\mathrm{ctrl}}A_{\mathrm{ctrl}} & 0
	\end{bmatrix}.
	\]
	A direct calculation gives $\tilde C\tilde G=\mathrm{diag}(I,I)$.

	\medskip
	\textbf{Part I: Triangular factorization for \((u,y)\)-windows.}
	Stack $\tilde y_{1:K}^d$:
	\[
	\tilde y_{1:K}^d
	=\tilde O_K\tilde x_0^d
	+ \tilde G_K\tilde w_{0:K-1}^d
	+ \tilde G_K(I_K\otimes\tilde G)\tilde v_{0:K-1}^d,
	\]
	where
	\[
	\tilde O_K=\begin{bmatrix}\tilde C\tilde A\\ \tilde C\tilde A^2\\ \vdots\\ \tilde C\tilde A^{K}\end{bmatrix},
	\tilde G_K=\begin{bmatrix}
	\tilde C & 0 & \cdots & 0 \\
	\tilde C\tilde A & \tilde C & \cdots & 0 \\
	\vdots & \vdots & \ddots & \vdots \\
	\tilde C\tilde A^{K-1} & \tilde C\tilde A^{K-2} & \cdots & \tilde C
	\end{bmatrix}.
	\]
	Since $\tilde C\tilde G=I$, the matrix $\tilde G_K(I_K\otimes\tilde G)$ is block \emph{lower} triangular with identity diagonal blocks and hence invertible.

	Let $\tilde{\mathcal Q}=\begin{bmatrix}I&0\end{bmatrix}$ so that $\tilde{\mathcal Q}\tilde y_t^d=u_t^d$.
	Define the fixed selection (reordering) operator
	\[
	\tilde{\mathcal Q}_{-T:-1}
	=\begin{bmatrix}
	I & 0\\[2pt]
	0 & (I_T\otimes \tilde{\mathcal Q})
	\end{bmatrix},
	\]
    \[\zeta\;\triangleq\;
	\mathrm{col}\!\bigl(\tilde y_{1:T_{\mathrm{ini}}+N-1}^d,\ u_{T_{\mathrm{ini}}+N:K}^d\bigr).\]
	Then
    \begin{align}
	\zeta
	=\tilde{\mathcal Q}_{-T:-1}\tilde y_{1:K}^d
	=&\underbrace{\tilde{\mathcal Q}_{-T:-1}\tilde O_K\tilde x_0^d
	+\tilde{\mathcal Q}_{-T:-1}\tilde G_K\tilde w_{0:K-1}^d}_{\boldsymbol{\rho}_\zeta}
	\nonumber \\
    &+\underbrace{\tilde{\mathcal Q}_{-T:-1}\tilde G_K(I_K\otimes\tilde G)}_{\mathsf T_\zeta}\tilde v_{0:K-1}^d.\nonumber
    \end{align}
	Here $\mathsf T_\zeta$ is obtained from a block lower-triangular, unit-diagonal matrix by deleting matched rows/columns; thus it remains block lower triangular with unit diagonal and is invertible. Since $\boldsymbol{\rho}_\zeta$ is independent of $\tilde v_{0:K-1}^d$ and
	$\mathrm{Cov}(\tilde v_{0:K-1}^d)=\mathrm{diag}(I\otimes R_{\mathrm{ctrl}},I\otimes R)\succ0$,
	\[
	\mathrm{Cov}(\zeta)
	=\mathrm{Cov}(\boldsymbol{\rho}_\zeta)+\mathsf T_\zeta\,\mathrm{Cov}(\tilde v_{0:K-1}^d)\,\mathsf T_\zeta^\top
	\ \succ\ 0.
	\]
	Using this proof, any $(u,y)$-stack extracted from $\tilde y_{1:K}^d$ by a fixed selection admits a representation “independent term $+$ invertible lower–triangular map $\times$ Gaussian noise” and is nondegenerate Gaussian.

	\medskip
	\textbf{Part II: Controller propagation and single-column decomposition of \(\Xi^{(N)}\).}
	Based on the factorization in Part~I, we now exploit the factorization of a specific column of $\Xi^{(N)}$.

	(i) The first column with $\phi_1$. Consider the first column
	$\mathbf c^{(1)}=\mathrm{col}(\phi_1^d,u_{1:T_{\mathrm{ini}}+T}^d,y_{1:T_{\mathrm{ini}}}^d)$.
	Define
	\[
	\boldsymbol{\eta}^{(1)}=\mathrm{col}(\phi_1^d,\ \nu_{1:T_{\mathrm{ini}}+T}^d,\ v_{1:T_{\mathrm{ini}}}^d),
	\]
	and write
	\[
	\mathbf c^{(1)}=\boldsymbol{\rho}^{(1)}+\mathsf T^{(1)}\boldsymbol{\eta}^{(1)},
	\qquad
	\mathsf T^{(1)}=
	\begin{bmatrix}
	I_{n_{\mathrm{ctrl}}} & 0 & 0 \\
	* & \mathsf L_u^{(1)} & 0 \\
	* & * & \mathsf L_y^{(1)}
	\end{bmatrix}.
	\]
	Here $\boldsymbol{\rho}^{(1)}$ is independent of $\boldsymbol{\eta}^{(1)}$; the blocks
	$\mathsf L_u^{(1)}$, $\mathsf L_y^{(1)}$ are unit-diagonal lower-triangular (from Part~I).
	Since $\mathrm{Cov}(\phi_1)\succ0$ and $R_{\mathrm{ctrl}},R\succ0$,
	$\mathrm{Cov}(\boldsymbol{\eta}^{(1)})\succ0$ and $\mathsf T^{(1)}$ has full row rank,
	so $\mathbf c^{(1)}$ is nondegenerate Gaussian.

	(ii) \emph{Noise-only factorization of the $y$-window and its injection into $\phi$.}
	Fix $s\ge0$.
	By the triangular factorization in Part~I, any fixed $y$-window admits
    \begin{align}
	    &y_{s+1:s+\kappa_{\mathrm{ctrl}}}\nonumber \\
	    &=
	    \underbrace{\rho_y^{(s)}}_{\text{independent of }v_{s+1:s+\kappa_{\mathrm{ctrl}}}}
	    +
	    \underbrace{T_y^{(s)}}_{\substack{\text{block lower triangular}\\ \text{unit diagonal}}}
	    v_{s+1:s+\kappa_{\mathrm{ctrl}}}.\nonumber
	\end{align}
	Unrolling the controller over $\kappa_{\mathrm{ctrl}}$ steps and injecting the above decomposition yields
    \begin{align}
	    &\phi_{s+\kappa_{\mathrm{ctrl}}+1}\nonumber \\
	    &=
	    \underbrace{A_{\mathrm{ctrl}}^{\kappa_{\mathrm{ctrl}}}\phi_{s+1}
	    + \mathcal C_{\mathrm{ctrl}}\,\rho_y^{(s)} + \zeta_s}_{\displaystyle \rho_\phi^{(s)}\ \perp\ v_{s+1:s+\kappa_{\mathrm{ctrl}}}}
	    +
	    \underbrace{\mathcal C_{\mathrm{ctrl}}T_y^{(s)}}_{\displaystyle \widetilde T_\phi^{(s)}}
	    v_{s+1:s+\kappa_{\mathrm{ctrl}}},\nonumber
	\end{align}
	where $\mathcal C_{\mathrm{ctrl}}\in\mathbb R^{n_{\mathrm{ctrl}}\times (p\kappa_{\mathrm{ctrl}})}$ denotes the $\kappa_{\mathrm{ctrl}}$-step controllability matrix of the stabilizing controller:
	\[
	\mathcal C_{\mathrm{ctrl}}
	=
	\begin{bmatrix}
	A_{\mathrm{ctrl}}^{\kappa_{\mathrm{ctrl}}-1}B_{\mathrm{ctrl}} &
	A_{\mathrm{ctrl}}^{\kappa_{\mathrm{ctrl}}-2}B_{\mathrm{ctrl}} & \cdots &
	B_{\mathrm{ctrl}}
	\end{bmatrix},
	\]
	$\widetilde T_\phi^{(s)}$ is a fixed block lower–triangular map (and has full row rank $n_{\mathrm{ctrl}}$ by controllability of $(A_{\mathrm{ctrl}},B_{\mathrm{ctrl}})$), 
	$\rho_\phi^{(s)}$ is independent of $v_{s+1:s+\kappa_{\mathrm{ctrl}}}$, and $\zeta_s$ collects state and noise terms up to time $s$. 
		
	Since $(A_{\mathrm{ctrl}},B_{\mathrm{ctrl}})$ is controllable and $\kappa_{\mathrm{ctrl}}$ is its controllability index, this block row has full row rank $n_{\mathrm{ctrl}}$. 
	Moreover, $T_y^{(s)}$ is invertible; hence $\widetilde{T}_\phi^{(s)}$ also has full row rank. 
	Therefore, there exists an index set $\mathcal J\subset\{1,\dots,p\kappa_{\mathrm{ctrl}}\}$ with $|\mathcal J|=n_{\mathrm{ctrl}}$ such that the square submatrix
	\[
	\widehat{\mathcal C}^{(s)}\;\triangleq\;(\widetilde{T}_\phi^{(s)})_{:,\,\mathcal J}\in\mathbb R^{n_{\mathrm{ctrl}}\times n_{\mathrm{ctrl}}}
	\]
	is nonsingular.
	As a result, the $s$-column $\mathbf c^{(s)}$ of $\Xi^{(N)}$ admits the following decomposition:
	\begin{align}
		\mathbf{c}^{(s)}=\boldsymbol{\rho}^{(s)}+\mathsf T^{(s)}\boldsymbol{\eta}^{(s)},
	\end{align}
	where 
	\[\boldsymbol{\rho}^{(s)}=\mathrm{col}(\rho_\phi^{(s)}, \rho_y^{(s)}),\]
    \[\boldsymbol{\eta}^{(s)}=\mathrm{col}(v_{s-\kappa_{\mathrm{ctrl}}:s-1}, \nu_{s:s+T_{ini}+T-1}, v_{s:s+T_{ini}-1}),\]
	with the block-lower-triangular structure
	\[
	\mathsf T^{(s)}=
	\begin{bmatrix}
	\widehat{\mathcal C}^{(s)} & 0 & 0 \\
	* & \mathsf L_u^{(s)} & 0 \\
	* & * & \mathsf L_y^{(s)}
	\end{bmatrix},
	\]
	where $\widehat{\mathcal C}^{(s)}$ is invertible,
	and $\mathsf L_u^{(s)},\mathsf L_y^{(s)}$ are unit-diagonal lower-triangular. Hence, $\mathsf T^{(s)}$ is invertible.

	\medskip
	\textbf{Part III: Column selection and block-lower-triangular concatenation.}
	Next, we are ready to introduce how to construct the $r$ columns to be picked. We set the start times at
	\[
	t_1\triangleq 1,\qquad
	t_k\triangleq k\bigl(T_{\mathrm{ini}}+T+\kappa_{\mathrm{ctrl}}\bigr)+1,\quad k=2,\dots,r,
	\]
	so that each picked column uses the blocks
	$\phi_{t_k}$, $u_{t_k:t_k+T_{\mathrm{ini}}+T-1}$, $y_{t_k:t_k+T_{\mathrm{ini}}-1}$.

	We now stack the $r$ selected columns together:
	\[
	\boldsymbol{\eta}_{\mathrm{all}}=\mathrm{col}\bigl(\boldsymbol{\eta}^{(1)},\dots,\boldsymbol{\eta}^{(r)}\bigr),\qquad
	\boldsymbol{\rho}_{\mathrm{all}}=\mathrm{col}\bigl(\boldsymbol{\rho}^{(1)},\dots,\boldsymbol{\rho}^{(r)}\bigr),
	\]
	and the column stack
	\[
	\mathbf C=\mathrm{col}\bigl(\mathbf c^{(1)},\dots,\mathbf c^{(r)}\bigr)
	=\boldsymbol{\rho}_{\mathrm{all}}+\mathsf T_{\mathrm{big}}\boldsymbol{\eta}_{\mathrm{all}},
	\]
	where
	\[
	\mathsf T_{\mathrm{big}}
	=
	\begin{bmatrix}
	\mathsf T^{(1)} & 0 & \cdots & 0 \\
	\star_{21} & \mathsf T^{(2)} & \cdots & 0 \\
	\vdots & \ddots & \ddots & \vdots \\
	\star_{r1} & \cdots & \star_{r,r-1} & \mathsf T^{(r)}
	\end{bmatrix}
	\]
	is \emph{block lower triangular} with \emph{diagonal blocks} $\mathsf T^{(j)}$.
	The starred blocks capture causal overlaps but do not destroy triangularity.
	Since each $\mathsf T^{(j)}$ is nonsingular, $\mathsf T_{\mathrm{big}}$ is also nonsingular.
	Moreover $\mathrm{Cov}(\boldsymbol{\eta}_{\mathrm{all}})\succ0$ because each component of $\boldsymbol{\eta}^{(j)}$ is drawn from nondegenerate Gaussians $(\phi_1,\nu,v)$.
	Therefore
	\[
	\mathrm{Cov}(\mathbf C)
	=\mathrm{Cov}(\boldsymbol{\rho}_{\mathrm{all}})+\mathsf T_{\mathrm{big}}\mathrm{Cov}(\boldsymbol{\eta}_{\mathrm{all}})\mathsf T_{\mathrm{big}}^\top
	\ \succ\ 0.
	\]
	\medskip
	\noindent\textbf{Part IV: Algebraic-variety step and almost-sure full row rank.}
	Let
	\[
	\boldsymbol{\zeta}_{\mathrm{all}}\;\triangleq\;\mathrm{col}\Bigl(\,\phi_{t_k},\ u_{t_k:t_k+T_{\mathrm{ini}}+T-1},\ y_{t_k:t_k+T_{\mathrm{ini}}-1}\,\Bigr)_{k=1}^r
	\]
	denote the stacked random vector on which $\mathbf C$ depends (so we may write $\mathbf C=\mathbf C(\boldsymbol{\zeta}_{\mathrm{all}})$).
	Set
	\[
	\Delta(\boldsymbol{\zeta}_{\mathrm{all}})\;=\;\det(\mathbf C).
	\]
	Then $\Delta$ is a polynomial in the entries of $\boldsymbol{\zeta}_{\mathrm{all}}$, and
	\[
	\{\mathrm{rank}\,\Xi^{(N)}<r\}\ \subseteq\ \{\Delta(\boldsymbol{\zeta}_{\mathrm{all}})=0\}.
	\]

	By our column construction, each entry of $\mathbf C$ depends on a \emph{distinct} scalar coordinate of $\boldsymbol{\zeta}_{\mathrm{all}}$. Hence these entries can be prescribed independently. In particular, choose a deterministic point $\boldsymbol{\zeta}_{\mathrm{all}}^*$ such that $\mathbf C(\boldsymbol{\zeta}_{\mathrm{all}}^*)=I_r$; then $\Delta(\mathbf C(\boldsymbol{\zeta}_{\mathrm{all}}^*))=1\neq0$, so $\Delta$ is not the zero polynomial.

	Therefore, the set
	\[
	\mathcal M\;=\;\{\boldsymbol{\zeta}_{\mathrm{all}}:\ \Delta(\boldsymbol{\zeta}_{\mathrm{all}})=0\}\ \subset\mathbb R^{r^2}
	\]
	is an algebraic variety of Lebesgue measure zero. Let $\lambda$ denote Lebesgue measure on $\mathbb R^{r^2}$ and let $\mu$ be the measure induced by~$\boldsymbol{\zeta}_{\mathrm{all}}$. By construction, $\boldsymbol{\zeta}_{\mathrm{all}}$ is jointly Gaussian with a nonsingular covariance matrix; hence $\mu$ admits an everywhere positive density with respect to~$\lambda$, i.e., $\mu\ll\lambda$. Consequently every $\lambda$-null set is also $\mu$-null, and since $\lambda(\mathcal M)=0$ we have $\mu(\mathcal M)=0$.

	Thus the rank-deficiency event has probability zero, i.e.,
	\[
	\mathbb P\!\bigl(\mathrm{rank}\,\Xi^{(N)}=r\bigr)=1.
	\]

    Moreover, when the trajectory library consists of multiple independent trajectories, Assumption~\ref{assump:multiple_trajectories} ensures that the random vectors forming each column of $\Xi^{(N)}$ are jointly Gaussian with a nonsingular covariance matrix.
    Applying the same argument as above, we again conclude that $\Xi^{(N)}$ has full row rank almost surely.

    \noindent\textbf{Non-Gaussian extension.}
	The argument above did not rely on any particular Gaussian property beyond 
	absolute continuity with respect to the Lebesgue measure. 
	Hence the almost-sure full row rank conclusion extends to a broader class of noise distributions.

	For the case where the trajectory library is constructed from a single trajectory, recall that $\mathbf C=\boldsymbol{\rho}_{\mathrm{all}}+\mathsf T_{\mathrm{big}}\boldsymbol{\eta}_{\mathrm{all}}$, 
	where $\mathsf T_{\mathrm{big}}$ is block lower triangular with full row rank and thus invertible. 
	By Assumption~\ref{assump:nonGaussian}, the support of $\boldsymbol{\eta}_{\mathrm{all}}$ has nonempty interior. 
	Then, by invertibility of $\mathsf T_{\mathrm{big}}$, the support of $\boldsymbol{\zeta}_{\mathrm{all}}$ also has nonempty interior. 
	Consequently, the measure induced by $\boldsymbol{\zeta}_{\mathrm{all}}$ is absolutely continuous with respect to Lebesgue measure. 
	Since the rank-deficiency event is contained in the algebraic variety $\mathcal M$
	which has Lebesgue measure zero, it follows that $\mathbb P(\mathcal M)=0$. 
	Therefore,
	\[
	\mathbb P\bigl(\mathrm{rank}\,\Xi^{(N)}=r\bigr)=1.
	\]
    The same argument applies when the trajectory library consists of multiple independent trajectories, completing the proof.
	
\end{proof}
\end{rese}

\bibliography{ref}

@ARTICLE{liu2020online,
  author={Liu, Hanxiao and Mo, Yilin and Yan, Jiaqi and Xie, Lihua and Johansson, Karl H.},
  journal={IEEE Transactions on Automatic Control}, 
  title={An Online Approach to Physical Watermark Design}, 
  year={2020},
  volume={65},
  number={9},
  pages={3895-3902},
  doi={10.1109/TAC.2020.2971994}}

@book{anderson2005optimal,
  title={Optimal filtering},
  author={Anderson, Brian DO and Moore, John B},
  year={2005},
  publisher={Courier Corporation}
}

@inproceedings{coulson2019data,
  title={Data-enabled predictive control: In the shallows of the DeePC},
  author={Coulson, Jeremy and Lygeros, John and D{\"o}rfler, Florian},
  booktitle={2019 18th European Control Conference (ECC)},
  pages={307--312},
  year={2019},
  organization={IEEE}
}

@article{oymak_revisiting_2022,
	title        = {Revisiting {Ho}–{Kalman}-{Based} {System} {Identification}: {Robustness} and {Finite}-{Sample} {Analysis}},
	shorttitle   = {Revisiting {Ho}–{Kalman}-{Based} {System} {Identification}},
	author       = {Oymak, Samet and Ozay, Necmiye},
	year         = 2022,
	month        = apr,
	journal      = {IEEE Transactions on Automatic Control},
	volume       = 67,
	number       = 4,
	pages        = {1914--1928},
	doi          = {10.1109/TAC.2021.3083651},
	issn         = {0018-9286, 1558-2523, 2334-3303},
	url          = {https://ieeexplore.ieee.org/document/9440770/},
	urldate      = {2023-03-02},
	abstract     = {We consider the problem of learning a realization for a linear time-invariant (LTI) dynamical system from input/output data. Given a single input/output trajectory, we provide ﬁnite time analysis for learning the system’s Markov parameters, from which a balanced realization is estimated using the classical Ho–Kalman algorithm. By proving a robustness result for the Ho–Kalman algorithm and combining it with the sample complexity results for Markov parameters, we show how much data are needed to approximate the balanced realization of the system up to a desired accuracy with high probability.},
	language     = {en},
	file         = {Oymak and Ozay - 2022 - Revisiting Ho–Kalman-Based System Identification .pdf:files/1298/Oymak and Ozay - 2022 - Revisiting Ho–Kalman-Based System Identification .pdf:application/pdf}
}

@article{ho1966effective,
	title        = {Effective construction of linear state-variable models from input/output functions},
	author       = {HO, BL and K{\'a}lm{\'a}n, Rudolf E},
	year         = 1966,
	journal      = {at-Automatisierungstechnik},
	publisher    = {Oldenbourg Wissenschaftsverlag},
	volume       = 14,
	number       = {1-12},
	pages        = {545--548}
}

@article{chiuso_ill-conditioning_2004,
	title        = {On the ill-conditioning of subspace identification with inputs},
	author       = {Chiuso, Alessandro and Picci, Giorgio},
	year         = 2004,
	month        = apr,
	journal      = {Automatica},
	volume       = 40,
	number       = 4,
	pages        = {575--589},
	doi          = {10.1016/j.automatica.2003.11.009},
	issn         = {00051098},
	url          = {https://linkinghub.elsevier.com/retrieve/pii/\\S0005109803003674},
	urldate      = {2022-10-18},
	abstract     = {There is experimental evidence that the performance of standard subspace algorithms from the literature (e.g. the N4SID method) may be surprisingly poor in certain experimental conditions. This happens typically when the past signals (past inputs and outputs) and future input spaces are nearly parallel. In this paper we argue that the poor behavior may be attributed to a form of ill-conditioning of the underlying multiple regression problem, which may occur for nearly parallel regressors. An elementary error analysis of the subspace identification problem, shows that there are two main possible causes of ill-conditioning. The first has to do with near collinearity of the state and future input subspaces. The second has to do with the dynamical structure of the input signal and may roughly be attributed to “lack of excitation”. Stochastic realization theory constitutes a natural setting for analyzing subspace identification methods. In this setting, we undertake a comparative study of three widely used subspace methods (N4SID, Robust N4SID and PO-MOESP). The last two methods are proven to be essentially equivalent and the relative accuracy, regarding the estimation of the (A; C) parameters, is shown to be the same. ? 2003 Elsevier Ltd. All rights reserved.},
	language     = {en},
	file         = {Chiuso and Picci - 2004 - On the ill-conditioning of subspace identification.pdf:files/818/Chiuso and Picci - 2004 - On the ill-conditioning of subspace identification.pdf:application/pdf}
}

@inproceedings{li2022fundamental,
	title        = {Fundamental Limit on SISO System Identification},
	author       = {Li, Jiayun and Sun, Shuai and Mo, Yilin},
	year         = 2022,
	booktitle    = {2022 IEEE 61st Conference on Decision and Control (CDC)},
	volume       = {},
	number       = {},
	pages        = {856--861},
	doi          = {10.1109/CDC51059.2022.9993203}
}

@book{van_den_hof_system_2005,
	isbn = {978-1-84628-178-5},
	url = {https://doi.org/10.1007/1-84628-178-4_4},
	title = {Modelling and {Identification} with {Rational} {Orthogonal} {Basis} {Functions}},
	author = {Heuberger, Peter S.C. and Van den Hof, Paul M.J. and Wahlberg, Bo},
	year = {2005},
	doi = {10.1007/1-84628-178-4_4},
}

@incollection{ljung1998system,
  title={System identification},
  author={Ljung, Lennart},
  booktitle={Signal analysis and prediction},
  pages={163--173},
  year={1998},
  publisher={Springer}
}

@article{breschi2023data,
  title={Data-driven predictive control in a stochastic setting: A unified framework},
  author={Breschi, Valentina and Chiuso, Alessandro and Formentin, Simone},
  journal={Automatica},
  volume={152},
  pages={110961},
  year={2023},
  publisher={Elsevier}
}

@article{schildbach_scenario_2014,
	title = {The scenario approach for {Stochastic} {Model} {Predictive} {Control} with bounds on closed-loop constraint violations},
	volume = {50},
	issn = {0005-1098},
	url = {https://www.sciencedirect.com/science/article/pii/S0005109814004166},
	doi = {10.1016/j.automatica.2014.10.035},
	abstract = {Many practical applications in control require that constraints on the inputs and states of the system are respected, while some performance criterion is optimized. In the presence of model uncertainties or disturbances, it is often sufficient to satisfy the state constraints for at least a prescribed share of the time, such as in building climate control or load mitigation for wind turbines. For such systems, this paper presents a new method of Scenario-Based Model Predictive Control (SCMPC). The basic idea is to optimize the control inputs over a finite horizon, subject to robust constraint satisfaction under a finite number of random scenarios of the uncertainty and/or disturbances. Previous SCMPC approaches have suffered from a substantial gap between the rate of constraint violations specified in the optimal control problem and that actually observed in closed-loop operation of the controlled system. This paper identifies the two theoretical explanations for this gap. First, accounting for the special structure of the optimal control problem leads to a substantial reduction of the problem dimension. Second, the probabilistic constraints have to be interpreted as average-in-time, rather than pointwise-in-time. Based on these insights, a novel SCMPC method can be devised for general linear systems with additive and multiplicative disturbances, for which the number of scenarios is significantly reduced. The presented method retains the essential advantages of the general SCMPC approach, namely a low computational complexity and the ability to handle arbitrary probability distributions. Moreover, the computational complexity can be adjusted by a sample-and-remove strategy.},
	number = {12},
	urldate = {2024-08-12},
	journal = {Automatica},
	author = {Schildbach, Georg and Fagiano, Lorenzo and Frei, Christoph and Morari, Manfred},
	month = dec,
	year = {2014},
	keywords = {Chance constraints, Scenario optimization, Scenario-Based MPC, Soft constraints, Stochastic MPC, Stochastic systems},
	pages = {3009--3018},
	file = {已提交版本:files/2370/Schildbach 等 - 2014 - The scenario approach for Stochastic Model Predict.pdf:application/pdf;ScienceDirect Snapshot:files/2371/S0005109814004166.html:text/html},
}

@article{li_stochastic_2023,
  title={Stochastic Data-Driven Predictive Control with Equivalence to Stochastic MPC},
  author={Li, Ruiqi and Simpson-Porco, John W. and Smith, Stephen L.},
  journal={arXiv preprint},
  year={2023},
  note={arXiv:2312.15177}
}

@article{fundamental_lemma,
author = {Willems, Jan and Rapisarda, Paolo and Markovsky, Ivan and De Moor, Bart},
year = {2005},
month = {04},
pages = {},
title = {A note on persistency of excitation},
volume = {54(4)},
journal = {Systems and Control Letters},
doi = {10.1016/j.sysconle.2004.09.003}
}

@article{CAMPI20021337,
title = {Virtual reference feedback tuning: a direct method for the design of feedback controllers},
journal = {Automatica},
volume = {38},
number = {8},
pages = {1337-1346},
year = {2002},
issn = {0005-1098},
doi = {https://doi.org/10.1016/S0005-1098(02)00032-8},
url = {https://www.sciencedirect.com/science/article/pii/S0005109802000328},
author = {M.C. Campi and A. Lecchini and S.M. Savaresi},
keywords = {Data-based controller design, Direct control, PID tuning},
abstract = {This paper considers the problem of designing a controller for an unknown plant based on input/output measurements. The new design method we propose is direct (no model identification of the plant is needed) and can be applied using a single set of data generated by the plant, with no need for specific experiments nor iterations. It is shown that the method searches for the global optimum of the design criterion and that, in the case of restricted complexity controller design, the achieved controller is a good approximation of the restricted complexity global optimal controller. A simulation example shows the effectiveness of the method.}
}

@article{MATTSSON2023625,
title = {On the regularization in DeePC*},
journal = {IFAC-PapersOnLine},
volume = {56},
number = {2},
pages = {625-631},
year = {2023},
note = {22nd IFAC World Congress},
issn = {2405-8963},
doi = {https://doi.org/10.1016/j.ifacol.2023.10.1637},
url = {https://www.sciencedirect.com/science/article/pii/S2405896323020463},
author = {Per Mattsson and Thomas B. Schön},
keywords = {Data-driven control, Predictive control, Regularization, Finite-horizon control},
abstract = {Data-enabled predictive control (DeePC) is an approach to data-driven direct control that has gained considerable interest recently. In this work we show that the main equations of DeePC can be derived from a linear regression model based on the input/output equations for a linear system that can be estimated using linear least squares. Our main contribution is an analysis showing that DeePC—using different types of regularization—gives predictions that equals those of an estimated model if the regularization weight is large enough. The numerical example indicate that the optimal weights are sufficiently large for all considered types of regularization. This suggests that the use of an indirect method based on the linear regression model implicitly estimated by DeePC can be beneficial, since this avoids tuning of weights, does not distort the control criterion and makes the online optimization considerably faster. Furthermore, a slightly modified estimate is considered, that reduces the number of unknown parameters in the linear regression model by taking causality into account.}
}

@article{morari_model_1999,
	title = {Model predictive control: past, present and future},
	volume = {23},
	issn = {0098-1354},
	url = {https://www.sciencedirect.com/science/article/pii/S0098135498003019},
	doi = {https://doi.org/10.1016/S0098-1354(98)00301-9},
	abstract = {More than 15 years after model predictive control (MPC) appeared in industry as an effective means to deal with multivariable constrained control problems, a theoretical basis for this technique has started to emerge. The issues of feasibility of the on-line optimization, stability and performance are largely understood for systems described by linear models. Much progress has been made on these issues for non-linear systems but for practical applications many questions remain, including the reliability and efficiency of the on-line computation scheme. To deal with model uncertainty ‘rigorously’ an involved dynamic programming problem must be solved. The approximation techniques proposed for this purpose are largely at a conceptual stage. Among the broader research needs the following areas are identified: multivariable system identification, performance monitoring and diagnostics, non-linear state estimation, and batch system control. Many practical problems like control objective prioritization and symptom-aided diagnosis can be integrated systematically and effectively into the MPC framework by expanding the problem formulation to include integer variables yielding a mixed-integer quadratic or linear program. Efficient techniques for solving these problems are becoming available.},
	number = {4},
	journal = {Computers \& Chemical Engineering},
	author = {Morari, Manfred and Lee, Jay H.},
	year = {1999},
	pages = {667--682},
}

@inproceedings{bemporad_robust_1999,
	address = {London},
	title = {Robust model predictive control: {A} survey},
	isbn = {978-1-84628-538-7},
	abstract = {This paper gives an overview of robustness in Model Predictive Control (MPC). After reviewing the basic concepts of MPC, we survey the uncertainty descriptions considered in the MPC literature, and the techniques proposed for robust constraint handling, stability, and performance. The key concept of “closedloop prediction” is discussed at length. The paper concludes with some comments on future research directions.},
	booktitle = {Robustness in identification and control},
	publisher = {Springer London},
	author = {Bemporad, Alberto and Morari, Manfred},
	editor = {Garulli, A. and Tesi, A.},
	year = {1999},
	pages = {207--226},
}

@inproceedings{mahalakshmi2016survey,
  title={A survey on forecasting of time series data},
  author={Mahalakshmi, Ganapathy and Sridevi, S and Rajaram, Shyamsundar},
  booktitle={2016 international conference on computing technologies and intelligent data engineering (ICCTIDE'16)},
  pages={1--8},
  year={2016},
  organization={IEEE}
}

@article{isermann2005model,
  title={Model-based fault-detection and diagnosis--status and applications},
  author={Isermann, Rolf},
  journal={Annual Reviews in control},
  volume={29},
  number={1},
  pages={71--85},
  year={2005},
  publisher={Elsevier}
}

@article{venkatasubramanian2003review,
  title={A review of process fault detection and diagnosis: Part I: Quantitative model-based methods},
  author={Venkatasubramanian, Venkat and Rengaswamy, Raghunathan and Yin, Kewen and Kavuri, Surya N},
  journal={Computers \& chemical engineering},
  volume={27},
  number={3},
  pages={293--311},
  year={2003},
  publisher={Elsevier}
}

@article{kaufmann2023champion,
  title={Champion-level drone racing using deep reinforcement learning},
  author={Kaufmann, Elia and Bauersfeld, Leonard and Loquercio, Antonio and M{\"u}ller, Matthias and Koltun, Vladlen and Scaramuzza, Davide},
  journal={Nature},
  volume={620},
  number={7976},
  pages={982--987},
  year={2023},
  publisher={Nature Publishing Group UK London}
}

@article{neunert2018whole,
  title={Whole-body nonlinear model predictive control through contacts for quadrupeds},
  author={Neunert, Michael and St{\"a}uble, Markus and Giftthaler, Markus and Bellicoso, Carmine D and Carius, Jan and Gehring, Christian and Hutter, Marco and Buchli, Jonas},
  journal={IEEE Robotics and Automation Letters},
  volume={3},
  number={3},
  pages={1458--1465},
  year={2018},
  publisher={IEEE}
}

@inproceedings{knight2002safety,
  title={Safety critical systems: challenges and directions},
  author={Knight, John C},
  booktitle={Proceedings of the 24th international conference on software engineering},
  pages={547--550},
  year={2002}
}

@article{VANOVERSCHEE199475,
title = {N4SID: Subspace algorithms for the identification of combined deterministic-stochastic systems},
journal = {Automatica},
volume = {30},
number = {1},
pages = {75-93},
year = {1994},
note = {Special issue on statistical signal processing and control},
issn = {0005-1098},
doi = {https://doi.org/10.1016/0005-1098(94)90230-5},
url = {https://www.sciencedirect.com/science/article/pii/0005109894902305},
author = {Peter {Van Overschee} and Bart {De Moor}},
keywords = {System identification, Kalman filters, difference equations, QR and singular value decomposition, multivariable systems, state space methods},
abstract = {Recently a great deal of attention has been given to numerical algorithms for subspace state space system identification (N4SID). In this paper, we derive two new N4SID algorithms to identify mixed deterministic-stochastic systems. Both algorithms determine state sequences through the projection of input and output data. These state sequences are shown to be outputs of non-steady state Kalman filter banks. From these it is easy to determine the state space system matrices. The N4SID algorithms are always convergent (non-iterative) and numerically stable since they only make use of QR and Singular Value Decompositions. Both N4SID algorithms are similar, but the second one trades off accuracy for simplicity. These new algorithms are compared with existing subspace algorithms in theory and in practice.}
}

@article{VERHAEGEN199461,
title = {Identification of the deterministic part of MIMO state space models given in innovations form from input-output data},
journal = {Automatica},
volume = {30},
number = {1},
pages = {61-74},
year = {1994},
note = {Special issue on statistical signal processing and control},
issn = {0005-1098},
doi = {https://doi.org/10.1016/0005-1098(94)90229-1},
url = {https://www.sciencedirect.com/science/article/pii/0005109894902291},
author = {Michel Verhaegen},
keywords = {State space, system identification, linear systems, linear algebra},
abstract = {In this paper we describe two algorithms to identify a linear, time-invariant, finite dimensional state space model from input-output data. The system to be identified is assumed to be excited by a measurable input and an unknown process noise and the measurements are disturbed by unknown measurement noise. Both noise sequences are discrete zero-mean white noise. The first algorithm gives consistent estimates only for the case where the input also is zero-mean white noise, while the same result is obtained with the second algorithm without this constraint. For the special case where the input signal is discrete zero-mean white noise, it is explicitly shown that this second algorithm is a special case of the recently developed Multivariable Output-Error State Space (moesp) class of algorithms based on instrumental variables. The usefulness of the presented schemes is highlighted in a realistic simulation study.}
}

@article{VERHEIJEN2023100914,
title = {Handbook of linear data-driven predictive control: Theory, implementation and design},
journal = {Annual Reviews in Control},
volume = {56},
pages = {100914},
year = {2023},
issn = {1367-5788},
doi = {https://doi.org/10.1016/j.arcontrol.2023.100914},
url = {https://www.sciencedirect.com/science/article/pii/S1367578823000780},
author = {P.C.N. Verheijen and V. Breschi and M. Lazar},
keywords = {Data-driven control, Predictive control, Constrained control, Model free control},
abstract = {Data-driven predictive control (DPC) has gained an increased interest as an alternative to model predictive control in recent years, since it requires less system knowledge for implementation and reliable data is commonly available in smart engineering systems. Several data-driven predictive control algorithms have been developed recently, which largely follow similar approaches, but with specific formulations and tuning parameters. This review aims to provide a structured and accessible guide on linear data-driven predictive control methods and practices for people in both academia and the industry seeking to approach and explore this field. To do so, we first discuss standard methods, such as subspace predictive control (SPC), and data-enabled predictive control (DeePC), but we also include newer hybrid approaches to DPC, such as γ–data-driven predictive control and generalized data-driven predictive control. For all presented data-driven predictive controllers we provide a detailed analysis regarding the underlying theory, implementation details and design guidelines, including an overview of methods to guarantee closed-loop stability and promising extensions towards handling nonlinear systems. The performance of the reviewed DPC approaches is compared via simulations on two benchmark examples from the literature, allowing us to provide a comprehensive overview of the different techniques in the presence of noisy data.}
}

@article{dorfler_bridging_2023,
	title = {Bridging {Direct} and {Indirect} {Data}-{Driven} {Control} {Formulations} via {Regularizations} and {Relaxations}},
	volume = {68},
	copyright = {https://ieeexplore.ieee.org/Xplorehelp/downloads/license-information/IEEE.html},
	issn = {0018-9286, 1558-2523, 2334-3303},
	url = {https://ieeexplore.ieee.org/document/9705109/},
	doi = {10.1109/TAC.2022.3148374},
	abstract = {In this article, we discuss connections between sequential system identiﬁcation and control for linear time-invariant systems, often termed indirect data-driven control, as well as a contemporary direct data-driven control approach seeking an optimal decision compatible with recorded data assembled in a Hankel matrix and robustiﬁed through suitable regularizations. We formulate these two problems in the language of behavioral systems theory and parametric mathematical programs, and we bridge them through a multicriteria formulation trading off system identiﬁcation and control objectives. We illustrate our results with two methods from subspace identiﬁcation and control: namely, subspace predictive control and low-rank approximation, which constrain trajectories to be consistent with a nonparametric predictor derived from (respectively, the column span of) a data Hankel matrix. In both cases, we conclude that direct and regularized data-driven control can be derived as convex relaxation of the indirect approach, and the regularizations account for an implicit identiﬁcation step. Our analysis further reveals a novel regularizer and a plausible hypothesis explaining the remarkable empirical performance of direct methods on nonlinear systems.},
	language = {en},
	number = {2},
	urldate = {2024-04-11},
	journal = {IEEE Transactions on Automatic Control},
	author = {Dorfler, Florian and Coulson, Jeremy and Markovsky, Ivan},
	month = feb,
	year = {2023},
	pages = {883--897},
	file = {Dorfler 等 - 2023 - Bridging Direct and Indirect Data-Driven Control F.pdf:files/2167/Dorfler 等 - 2023 - Bridging Direct and Indirect Data-Driven Control F.pdf:application/pdf},
}

@ARTICLE{sader2023causality,
  author={Sader, Malika and Wang, Yibo and Huang, Dexian and Shang, Chao and Huang, Biao},
  journal={IEEE Transactions on Control Systems Technology}, 
  title={Causality-Informed Data-Driven Predictive Control}, 
  year={2025},
  volume={},
  number={},
  pages={1-8},
  keywords={Cause effect analysis;Predictive control;Uncertainty;Predictive models;Computational modeling;Control design;Trajectory;Stochastic processes;Linear systems;Faces;Causality;data-driven predictive control (DDPC);LQ factorization;regularization;subspace predictive control (SPC)},
  doi={10.1109/TCST.2025.3541179}}

@article{WANG2025111897,
title = {Data-driven output prediction and control of stochastic systems: An innovation-based approach},
journal = {Automatica},
volume = {171},
pages = {111897},
year = {2025},
issn = {0005-1098},
doi = {https://doi.org/10.1016/j.automatica.2024.111897},
url = {https://www.sciencedirect.com/science/article/pii/S0005109824003911},
author = {Yibo Wang and Keyou You and Dexian Huang and Chao Shang},
keywords = {Data-driven control, Linear systems, Stochastic systems, Kalman filter, Subspace identification},
abstract = {Recent years have witnessed a booming interest in data-driven control of dynamical systems. However, the implicit data-driven output predictors are vulnerable to uncertainty such as process disturbance and measurement noise, causing unreliable predictions and unexpected control actions. In this brief, we put forward a new data-driven approach to output prediction of stochastic linear time-invariant (LTI) systems. By utilizing the innovation form, the uncertainty in stochastic LTI systems is recast as innovations that can be readily estimated from input–output data without knowing system matrices. In this way, by applying the fundamental lemma to the innovation form, we propose a new innovation-based data-driven output predictor (OP) of stochastic LTI systems, which bypasses the need for identifying state–space matrices explicitly and building a state estimator. The boundedness of the second moment of prediction errors in closed-loop is established under mild conditions. The proposed data-driven OP can be integrated into optimal control design for better performance. Numerical simulations demonstrate the outperformance of the proposed innovation-based methods in output prediction and control design over existing formulations.}
}

@article{pan_towards_nodate,
	title = {Towards data-driven stochastic predictive control},
	copyright = {© 2023 The Authors. International Journal of Robust and Nonlinear Control published by John Wiley \& Sons Ltd.},
	issn = {1099-1239},
	url = {https://onlinelibrary.wiley.com/doi/abs/10.1002/rnc.6812},
	doi = {10.1002/rnc.6812},
	abstract = {Data-driven predictive control based on the fundamental lemma by Willems et al. is frequently considered for deterministic LTI systems subject to measurement noise. However, little has been done on data-driven stochastic control. In this paper, we propose a data-driven stochastic predictive control scheme for LTI systems subject to possibly unbounded additive process disturbances. Based on a stochastic extension of the fundamental lemma and leveraging polynomial chaos expansions, we construct a data-driven surrogate optimal control problem (OCP). Moreover, combined with an online selection strategy of the initial condition of the OCP, we provide sufficient conditions for recursive feasibility and for stability of the proposed data-driven predictive control scheme. Finally, two numerical examples illustrate the efficacy and closed-loop properties of the proposed scheme for process disturbances governed by different distributions.},
	year={2023},
	language = {en},
	urldate = {2024-07-29},
	journal = {International Journal of Robust and Nonlinear Control},
	author = {Pan, Guanru and Ou, Ruchuan and Faulwasser, Timm},
	keywords = {closed-loop properties, data-driven system representation, polynomial chaos expansion, stochastic model predictive control},
	file = {已提交版本:files/2344/Pan 等 - Towards data-driven stochastic predictive control.pdf:application/pdf},
}

@article{pan_stochastic_2023,
	title = {On a {Stochastic} {Fundamental} {Lemma} and {Its} {Use} for {Data}-{Driven} {Optimal} {Control}},
	volume = {68},
	issn = {0018-9286, 1558-2523, 2334-3303},
	url = {http://arxiv.org/abs/2111.13636},
	doi = {10.1109/TAC.2022.3232442},
	abstract = {Data-driven control based on the fundamental lemma by Willems et al. is frequently considered for deterministic LTI systems subject to measurement noise. However, besides measurement noise, stochastic disturbances might also directly affect the dynamics. In this paper, we leverage Polynomial Chaos Expansions (PCE) to extend the deterministic fundamental lemma towards stochastic systems. This extension allows to predict future statistical distributions of the inputs and outputs for stochastic LTI systems in data-driven fashion, i.e., based on the knowledge of previously recorded inputoutput-disturbance data and of the disturbance distribution we perform data-driven uncertainty propagation. Finally, we analyze data-driven stochastic optimal control problems and we propose a conceptual framework for data-driven stochastic predictive control. Numerical examples illustrate the efficacy of the proposed concepts.},
	language = {en},
	number = {10},
	urldate = {2024-07-29},
	journal = {IEEE Transactions on Automatic Control},
	author = {Pan, Guanru and Ou, Ruchuan and Faulwasser, Timm},
	month = oct,
	year = {2023},
	note = {arXiv:2111.13636 [cs, eess, math]},
	keywords = {Mathematics - Optimization and Control, Electrical Engineering and Systems Science - Systems and Control},
	pages = {5922--5937},
	file = {Pan 等 - 2023 - On a Stochastic Fundamental Lemma and Its Use for .pdf:files/2360/Pan 等 - 2023 - On a Stochastic Fundamental Lemma and Its Use for .pdf:application/pdf},
}

@article{teutsch2024sampling,
  title={Sampling-based Stochastic Data-driven Predictive Control under Data Uncertainty},
  author={Teutsch, Johannes and Kerz, Sebastian and Wollherr, Dirk and Leibold, Marion},
  journal={arXiv preprint},
  year={2024},
  note={arXiv:2402.00681}
}

@article{kerz_data-driven_2023,
  title={Data-driven tube-based stochastic predictive control},
  author={Kerz, Sebastian and Teutsch, Johannes and Br{\"u}digam, Tim and Wollherr, Dirk and Leibold, Marion},
  journal={arXiv preprint},
  year={2023},
  note={arXiv:2112.04439}
}

@article{ajay2022conditional,
  title={Is conditional generative modeling all you need for decision-making?},
  author={Ajay, Anurag and Du, Yilun and Gupta, Abhi and Tenenbaum, Joshua and Jaakkola, Tommi and Agrawal, Pulkit},
  journal={arXiv preprint},
  year={2022},
  note={arXiv:2211.15657}
}

@article{mirza2014conditional,
  title={Conditional generative adversarial nets},
  author={Mirza, Mehdi},
  journal={arXiv preprint},
  year={2014},
  note={arXiv:1411.1784}
}

@article{hjalmarsson1995model,
  title={Model-free tuning of a robust regulator for a flexible transmission system},
  author={Hjalmarsson, H{\aa}kan and Gunnarsson, Svante and Gevers, Michel},
  journal={European Journal of Control},
  volume={1},
  number={2},
  pages={148--156},
  year={1995},
  publisher={Elsevier}
}

@article{LANDAU199577,
title = {A Flexible Transmission System as a Benchmark for Robust Digital Control*},
journal = {European Journal of Control},
volume = {1},
number = {2},
pages = {77-96},
year = {1995},
issn = {0947-3580},
doi = {https://doi.org/10.1016/S0947-3580(95)70011-5},
url = {https://www.sciencedirect.com/science/article/pii/S0947358095700115},
author = {I.D. Landau and D. Rey and A. Karimi and A. Voda and A. Franco},
keywords = {Digital control, Robust control, System identification, Flexible systems},
abstract = {A benchmark problem for digital robust control design is described. The robustness in stability and in performance of eight solutions to this benchmark which have been presented at the 1995 European Control Conference in Rome (Italy) are evaluated. The eight controllers have been designed using several different methods, including discrete time H∞ optimisation, quantitative feedback theory, non-integer order control, generalised predictive control, combined pole placement/sensitivity function shaping and model free optimisation. Simulations and-experimental results are given.}
}

@article{MARKOVSKY202142,
title = {Behavioral systems theory in data-driven analysis, signal processing, and control},
journal = {Annual Reviews in Control},
volume = {52},
pages = {42-64},
year = {2021},
issn = {1367-5788},
doi = {https://doi.org/10.1016/j.arcontrol.2021.09.005},
url = {https://www.sciencedirect.com/science/article/pii/S1367578821000754},
author = {Ivan Markovsky and Florian Dörfler},
keywords = {Behavioral systems theory, Data-driven control, Missing data estimation, System identification},
abstract = {The behavioral approach to systems theory, put forward 40 years ago by Jan C. Willems, takes a representation-free perspective of a dynamical system as a set of trajectories. Till recently, it was an unorthodox niche of research but has gained renewed interest for the newly emerged data-driven paradigm, for which it is uniquely suited due to the representation-free perspective paired with recently developed computational methods. A result derived in the behavioral setting that became known as the fundamental lemma started a new class of subspace-type data-driven methods. The fundamental lemma gives conditions for a non-parametric representation of a linear time-invariant system by the image of a Hankel matrix constructed from raw time series data. This paper reviews the fundamental lemma, its generalizations, and related data-driven analysis, signal processing, and control methods. A prototypical signal processing problem, reviewed in the paper, is missing data estimation. It includes simulation, state estimation, and output tracking control as special cases. The direct data-driven control methods using the fundamental lemma and the non-parametric representation are loosely classified as implicit and explicit approaches. Representative examples are data-enabled predictive control (an implicit method) and data-driven linear quadratic regulation (an explicit method). These methods are equally amenable to certainty-equivalence as well as to robust control. Emphasis is put on the robustness of the methods under noise. The methods allow for theoretical certification, they are computationally tractable, in comparison with machine learning methods require small amount of data, and are robustly implementable in real-time on complex physical systems.}
}

@ARTICLE{coulson_distributionally_2021,
  author={Coulson, Jeremy and Lygeros, John and Dörfler, Florian},
  journal={IEEE Transactions on Automatic Control}, 
  title={Distributionally Robust Chance Constrained Data-Enabled Predictive Control}, 
  year={2022},
  volume={67},
  number={7},
  pages={3289-3304},
  keywords={Trajectory;Stochastic processes;Predictive control;Prediction algorithms;Predictive models;Dynamical systems;Uncertainty;Data-driven control;distributionally robust optimization;predictive control},
  doi={10.1109/TAC.2021.3097706}}

@article{kf_initial_condition,
author = {Ocone, Daniel and Pardoux, Etienne},
title = {Asymptotic Stability of the Optimal Filter with Respect to Its Initial Condition},
journal = {SIAM Journal on Control and Optimization},
volume = {34},
number = {1},
pages = {226-243},
year = {1996},
doi = {10.1137/S0363012993256617},

URL = { 
    

        https://doi.org/10.1137/S0363012993256617


},
eprint = { 
    

        https://doi.org/10.1137/S0363012993256617


}
,
    abstract = { Consider the problem of estimation of a diffusion signal observed in additive white noise. If the solution to the filtering equations, initialized with an incorrect prior distribution, approaches the true conditional distribution asymptotically in time, then the filter is said to be asymptotically stable with respect to perturbations of the initial condition. This paper presents asymptotic stability results for linear filtering problems and for signals with limiting ergodic behavior. For the linear case, stability of the Riccati equation of Kalman filtering is used to derive almost sure asymptotic stability of linear filters for possibly non-Gaussian initial conditions. In the nonlinear case, asymptotic stability in a weak convergence sense is shown for filters of signal diffusions which converge in law to an invariant distribution. }
}

@article{KWAKERNAAK1993255,
title = {Robust control and $H_\infty$ optimization—Tutorial paper},
journal = {Automatica},
volume = {29},
number = {2},
pages = {255-273},
year = {1993},
issn = {0005-1098},
doi = {https://doi.org/10.1016/0005-1098(93)90122-A},
url = {https://www.sciencedirect.com/science/article/pii/000510989390122A},
author = {Huibert Kwakernaak},
keywords = {-optimal control, robust control},
abstract = {The paper presents a tutorial exposition of H∞-optimal regulation theory, emphasizing the relevance of the mixed sensitivity problem for robust control system design.}
}

@INPROCEEDINGS{4789462,
  author={Campo, Peter J. and Morari, Manfred},
  booktitle={1987 American Control Conference}, 
  title={Robust Model Predictive Control}, 
  year={1987},
  volume={},
  number={},
  pages={1021-1026},
  keywords={Robust control;Predictive models;Predictive control;Uncertainty;Chemical technology;Chemical engineering;Linear systems;Error correction;Minimax techniques;Robustness},
  doi={10.23919/ACC.1987.4789462}}

@article{FAVOREEL19994004,
title = {SPC: Subspace Predictive Control},
journal = {IFAC Proceedings Volumes},
volume = {32},
number = {2},
pages = {4004-4009},
year = {1999},
note = {14th IFAC World Congress 1999, Beijing, China, 5-9 July},
issn = {1474-6670},
doi = {https://doi.org/10.1016/S1474-6670(17)56683-5},
url = {https://www.sciencedirect.com/science/article/pii/S1474667017566835},
author = {Wouter Favoreel and Bart De Moor and Michel Gevers},
keywords = {Subspace methods, Predictive control, LQG control method, Identification, Linear systems},
abstract = {Subspace identification has proven to be an excellent system identification method under peculiar industrial situations. Model predictive control on the other hand also turned out to be a very competitive method, especially in chemical industry. In practise, the identification and the control of a system are almost always considered as two separate problems. In the present paper some remarkable analogies between subspace identification and model predictive control are uncovered. Both methods can be combined in a very elegant way to form a numerically robust and easily implementable control/identification algorithm. This is the reason why we refer to it as subspace predictive control. The main result is that the system identification step and the controller design are done simultaneously. Starting from input and output measurements of the unknown system, only a QR-decomposition followed by a SV-decomposition are required to find the controller parameters.}
}

@article{kf_mmse_optimal,
author = {Humpherys, Jeffrey and Redd, Preston and West, Jeremy},
title = {A Fresh Look at the Kalman Filter},
journal = {SIAM Review},
volume = {54},
number = {4},
pages = {801-823},
year = {2012},
doi = {10.1137/100799666},

URL = { 
    

        https://doi.org/10.1137/100799666


​    

},
eprint = { 
    

        https://doi.org/10.1137/100799666


​    

}
,
    abstract = { In this paper, we discuss the Kalman filter for state estimation in noisy linear discrete-time dynamical systems. We give an overview of its history, its mathematical and statistical formulations, and its use in applications. We describe a novel derivation of the Kalman filter using Newton's method for root finding. This approach is quite general as it can also be used to derive a number of variations of the Kalman filter, including recursive estimators for both prediction and smoothing, estimators with fading memory, and the extended Kalman filter for nonlinear systems. }
}

@article{faulwasser_behavioral_2023,
	title = {Behavioral theory for stochastic systems? {A} data-driven journey from {Willems} to {Wiener} and back again},
	volume = {55},
	issn = {1367-5788},
	url = {https://www.sciencedirect.com/science/article/pii/S1367578823000093},
	doi = {https://doi.org/10.1016/j.arcontrol.2023.03.005},
	abstract = {The fundamental lemma by Jan C. Willems and co-workers is deeply rooted in behavioral systems theory and it has become one of the supporting pillars of the recent progress on data-driven control and system analysis. This tutorial-style paper combines recent insights into stochastic and descriptor-system formulations of the lemma to further extend and broaden the formal basis for behavioral theory of stochastic linear systems. We show that series expansions – in particular Polynomial Chaos Expansions (PCE) of L2-random variables, which date back to Norbert Wiener’s seminal work – enable equivalent behavioral characterizations of linear stochastic systems. Specifically, we prove that under mild assumptions the behavior of the dynamics of the L2-random variables is equivalent to the behavior of the dynamics of the series expansion coefficients and that it entails the behavior composed of sampled realization trajectories. We also illustrate the short-comings of the behavior associated to the time-evolution of the statistical moments. The paper culminates in the formulation of the stochastic fundamental lemma for linear time-invariant systems, which in turn enables numerically tractable formulations of data-driven stochastic optimal control combining Hankel matrices in realization data (i.e. in measurements) with PCE concepts.},
	journal = {Annual Reviews in Control},
	author = {Faulwasser, Timm and Ou, Ruchuan and Pan, Guanru and Schmitz, Philipp and Worthmann, Karl},
	year = {2023},
	keywords = {Behavioral systems theory, Data-based prediction, Data-driven control, Descriptor systems, Linear stochastic systems, Polynomial chaos expansions, Uncertainty propagation, Uncertainty quantification},
	pages = {92--117},
}

@article{kladtke_towards_2025,
  title={Towards explainable data-driven predictive control with regularizations},
  author={Kl{\"a}dtke, Manuel and Darup, Moritz Schulze},
  journal={arXiv preprint},
  year={2025},
  note={arXiv:2503.21952}
}

@article{song2021score,
  title={Score-based generative modeling through stochastic differential equations},
  author={Song, Yang and Sohl-Dickstein, Jascha and Kingma, Diederik P and Kumar, Abhishek and Ermon, Stefano and Poole, Ben},
  journal={International Conference on Learning Representations},
  year={2021}
}

@article{lipman2023flow,
  title={Flow matching for generative modeling},
  author={Lipman, Yaron and Chen, Ricky T. Q. and Ben-Hamu, Heli and Nickel, Maximilian and Le, Matthew},
  journal={International Conference on Learning Representations},
  year={2023}
}

@book{bishop2006pattern,
  title={Pattern recognition and machine learning},
  author={Bishop, Christopher M},
  year={2006},
  publisher={Springer}
}

@book{chung2000course,
  title={A course in probability theory},
  author={Chung, Kai Lai},
  year={2000},
  publisher={Elsevier}
}

@misc{dinkla_closed-loop_2024,
	title = {Closed-loop {Data}-{Enabled} {Predictive} {Control} and its equivalence with {Closed}-loop {Subspace} {Predictive} {Control}},
	url = {http://arxiv.org/abs/2402.14374},
	abstract = {Factors like improved data availability and increasing system complexity have sparked interest in data-driven predictive control (DDPC) methods like Data-enabled Predictive Control (DeePC). However, closed-loop identification bias arises in the presence of noise, which reduces the effectiveness of obtained control policies. In this paper we propose Closed-loop Data-enabled Predictive Control (CL-DeePC), a framework that unifies different approaches to address this challenge. To this end, CL-DeePC incorporates instrumental variables (IVs) to synthesize and sequentially apply consistent single or multi-step-ahead predictors. Furthermore, a computationally efficient CL-DeePC implementation is developed that reveals an equivalence with Closed-loop Subspace Predictive Control (CL-SPC). Compared to DeePC, CL-DeePC simulations demonstrate superior reference tracking, with a sensitivity study finding a 48\% lower susceptibility to noise-induced reference tracking performance degradation.},
	urldate = {2024-03-21},
	publisher = {arXiv},
	author = {Dinkla, Rogier and Mulders, Sebastiaan and Oomen, Tom and van Wingerden, Jan-Willem},
	month = feb,
	year = {2024},
	note = {arXiv:2402.14374 [cs, eess]},
	keywords = {Electrical Engineering and Systems Science - Systems and Control},
	annote = {Comment: DOIs for code and data are available in paper},
	file = {arXiv.org Snapshot:/home/jiayun/Zotero/storage/PJ34P2XV/2402.html:text/html;Full Text PDF:/home/jiayun/Zotero/storage/AEEG2RWE/Dinkla 等 - 2024 - Closed-loop Data-Enabled Predictive Control and it.pdf:application/pdf},
}

@article{sohn2015cvae,
  title={Learning structured output representation using deep conditional generative models},
  author={Sohn, Kihyuk and Lee, Honglak and Yan, Xinchen},
  journal={Advances in neural information processing systems},
  volume={28},
  year={2015}
}

@book{ljung2007practical,
  title={Practical issues of system identification},
  author={Ljung, Lennart},
  year={2007},
  publisher={Link{\"o}ping University Electronic Press}
}

@article{hof_modelling_2000,
	title = {Modelling and {Identification} with {Rational} {Orthogonal} {Basis} {Functions}},
	volume = {33},
	issn = {1474-6670},
	url = {https://www.sciencedirect.com/science/article/pii/S1474667017397914},
	doi = {https://doi.org/10.1016/S1474-6670(17)39791-4},
	abstract = {Decomposing dynamical systems in terms of orthogonal expansions enables the modelling/approximation of a system with a finite length expansion. By flexibly tuning the basis functions to underlying system characteristics, the rate of convergence of these expansions can be drastically increased, leading to highly accurate models (small bias) being represented by few parameters (small variance). Additionally algorithmic and numerical aspects are favourable. A recently developed general theory for basis construction will be presented, that is a generalization of the classical Laguerre theory. The basis functions are applied in problems of identification, approximation, realization, uncertainty modelling, and adaptive filtering, particularly exploiting the property that basis function models are linearly parametrized. Besides powerful algorithms, they also provide useful analysis tools for understanding the underlying identification/approximation algorithms.},
	number = {15},
	journal = {IFAC Proceedings Volumes},
	author = {Hof, Paul Van den and Wahlberg, Bo and Heuberger, Peter and Ninness, Brett and Bokor, Jozsef and Silva, Tomás Oliveira e},
	year = {2000},
	keywords = {model reduction, orthogonal basis functions, parametrization, series expansions, System identification},
	pages = {445--455},
	annote = {12th IFAC Symposium on System Identification (SYSID 2000), Santa Barbara, CA, USA, 21-23 June 2000},
}

@article{Elfring2021ParticleFA,
  title={Particle Filters: A Hands-On Tutorial},
  author={Jos Elfring and Elena Torta and Ren{\'e} van de Molengraft},
  journal={Sensors (Basel, Switzerland)},
  year={2021},
  volume={21},
  url={https://api.semanticscholar.org/CorpusID:231595612}
}
\bibliographystyle{ieeetr}
\vspace{-1.3cm}
\begin{biographynophoto}{Jiayun Li} received her Bachelor of Engineering degree from Department of Automation, Tsinghua University in 2022. She is currently a Ph.D. candidate in the Department of Automation, Tsinghua University. Her research interests include system identification, learning-based control.
\end{biographynophoto}
\vspace{-1cm}
\begin{biographynophoto}{Yilin Mo} is an Associate Professor in the Department of Automation, Tsinghua University. He received his Ph.D. In Electrical and Computer Engineering from Carnegie Mellon University in 2012 and his Bachelor of Engineering degree from Department of Automation, Tsinghua University in 2007. Prior to his current position, he was a postdoctoral scholar at Carnegie Mellon University in 2013 and California Institute of Technology from 2013 to 2015. He held an assistant professor position in the School of Electrical and Electronic Engineering at Nanyang Technological University from 2015 to 2018. His research interests include secure control systems and networked control systems, with applications in sensor networks and power grids.
\end{biographynophoto}

\end{document}